\theoremstyle{plain}
\newtheorem{theorem}{Theorem}[section]
\newtheorem{lemma}[theorem]{Lemma}
\newtheorem{corollary}[theorem]{Corollary}
\newtheorem{proposition}[theorem]{Proposition}
\theoremstyle{remark}
\newtheorem{definition}[theorem]{Definition}
\newtheorem{remark}[theorem]{Remark}
\newcommand*{\rom}[1]{\expandafter\@slowromancap\romannumeral #1@}
\newcommand{\abs}[1]{\left\lvert #1 \right\rvert}
\newcommand{\eqnsection}{
\renewcommand{\theequation}{\thesection.\arabic{equation}}
    \makeatletter
    \csname  @addtoreset\endcsname{equation}{section}
    \makeatother}
\DeclareFontFamily{U}{mathx}{\hyphenchar\font45}
\DeclareFontShape{U}{mathx}{m}{n}{
      <5> <6> <7> <8> <9> <10>
      <10.95> <12> <14.4> <17.28> <20.74> <24.88>
      mathx10
      }{}
\DeclareSymbolFont{mathx}{U}{mathx}{m}{n}
\DeclareMathAccent{\widecheck}{0}{mathx}{"71}
\DeclareMathAccent{\wideparen}{0}{mathx}{"75}
\DeclareFontFamily{U}{lasy}{}
\DeclareFontShape{U}{lasy}{m}{n}{
  <-5.5> lasy5
  <5.5-6.5> lasy6
  <6.5-7.5> lasy7
  <7.5-8.5> lasy8
  <8.5-9.5> lasy9
  <9.5-> lasy10
}{}
\def\r{{\mathbb R}}
\def\e{{\mathbb E}}
\def\p{{\mathbb P}}
\def\q{{\mathbb Q}}
\def\z{{\mathbb Z}}
\def\N{{\mathbb N}}
\def\T{{\mathbb T}}
\def\d{\, \mathrm{d}}
\def\cL{\mathcal L}
\def\O{\mathcal O}
\def\wL{{\widetilde{\mathcal L} }}
\def\tG{{\widetilde {\mathbb T}}}
\def\cX{\mathcal X}
\def\cZ{\mathcal Z}
\def\bK{\mathbb K}
\def\bL{\mathbf L}
\def\cN{\mathcal N}
\begin{document}

\begin{frontmatter}
\title{Inverting Ray-Knight identities on trees}
\runtitle{Inverting Ray-Knight identities on trees}

\begin{aug}
\author[A]{\fnms{Xiaodan}~\snm{Li}\ead[label=e1]{lixiaodan@mail.shufe.edu.cn}},
\author[B]{\fnms{Yushu}~\snm{Zheng}\ead[label=e2]{yszheng666@gmail.com}}%
\address[A]{Department of Mathematics,
Shanghai University of Finance and Economics, \printead[presep={,\ }]{e1}}

\address[B]{Shanghai Center for Mathematical Sciences,
Fudan University \printead[presep={,\ }]{e2}}
\end{aug}

\begin{abstract}
In this paper, we first introduce the Ray-Knight identity and percolation Ray-Knight identity related to loop soup  with intensity $\alpha (\ge 0)$ on trees. Then we present the inversions of the above identities, which are expressed in terms of  repelling jump processes. In particular, the inversion in the case of $\alpha=0$ gives the conditional law of a continuous-time Markov chain given its local time field. We further show that the fine mesh limits of these repelling jump processes are the self-repelling diffusions \cite{warren98,Aidekon} involved in the inversion of the Ray-Knight identity on the corresponding metric graph. This is a generalization of results in \cite{2016Inverting,lupu2019inverting,LupuEJP657}, where the authors explore the case of $\alpha=1/2$ on a general graph. Our construction is different from \cite{2016Inverting,lupu2019inverting} and based on the link between random networks  and loop soups.
\end{abstract}

\begin{keyword}[class=MSC]
\kwd[Primary ]{60K35}
\kwd{60J55}
\kwd[; Secondary ]{60G55}
\kwd{60J27}
\kwd{60J65}
\end{keyword}

\begin{keyword}
\kwd{Ray-Knight identities}
\kwd{Vertex repelling jump processes}
\kwd{Loop soup}
\end{keyword}

\end{frontmatter}
\setcounter{tocdepth}{2}
\tableofcontents
\section{Introduction}\label{introduction}
Imagine a Brownian crook who spent a month in a large metropolis. The number of nights he spent in hotels A, B, C, $\cdots$, etc. is known; but not the order, nor his itinerary. So the only information the police has is total hotel bills. This vivid story is quoted from \cite{warren98}, which is also the paper the name `Brownian burglar' comes from. In \cite{warren98}, Warren and Yor constructed the Brownian burglar to describe the law of reflected Brownian motion on the positive half line conditioned on its local time process (also called  occupation time field). Meanwhile, Aldous \cite{1998BROWNIAN} used the tree structure of the Brownian excursion to show that the genealogy of the conditioned Brownian motion is a time-changed Kingman coalescent. The article \cite{warren98}  can be viewed as a construction of the process in time while \cite{1998BROWNIAN} as a construction in space.
Then a natural question arises. 

(Q1) How can we describe the law of a continuous-time Markov chain (CTMC) conditionally on its occupation time field?

This problem can actually be seen as a special case in a more general class of recovery problems that we are to explain. The occupation time field of a CTMC is considered in the generalized second Ray-Knight theorem, which provides an identity between  the law of the sum of half of a squared Gaussian free field (GFF) with boundary condition $0$ and the occupation time field of an independent Markovian path on one hand, and the law of half of a squared GFF with boundary condition $\sqrt{2u}$ on the other hand (see \cite{Eisenbaum00,Eisenbaum94,2006Markov}). We call this identity a Ray-Knight identity. 
It is well-known that the occupation time field of a loop soup with intensity $\alpha=1/2$ is distributed as half of a squared GFF by Le Jan's isomorphism \cite{le2011markov}. Therefore the generalized second Ray-Knight identity can also be stated using a loop soup with intensity $1/2$.  In the case of a loop soup with arbitrary intensity $\alpha > 0$, an analogous identity holds: adding to the occupation time field of a loop soup with `boundary condition' $0$  the occupation time field of an independent CTMC until local time $u$ gives the distribution of  the occupation time field of a loop soup with `boundary condition' $u$, see Proposition \ref{rayknighttransient} below for a precise statement. We call any such identity a Ray-Knight identity. Inverting the Ray-Knight identity refers to recovering the CTMC conditioned on the total occupation time field.

Vertex-reinforced jump processes (VRJP), conceived by Werner and first studied by Davis and Volkov \cite{2002Continuous,2004Vertex}, are  continuous-time jump processes favouring sites with higher local times. Surprisingly, Sabot and Tarr\`es \cite{2016Inverting} found that a time change of a variant of VRJP provides an inversion of the Ray-Knight identity on a general graph in the case of $\alpha=1/2$. It is natural to wonder whether an analogous description holds for an arbitrary intensity $\alpha$.

(Q2) For any $\alpha>0$, how can we describe the process that inverts the Ray-Knight identity 
related to  loop soup with intensity $\alpha>0$?

Note that (Q1) can be viewed as a special case of (Q2) with $\alpha=0$ if we generalize (Q2). Intuitively, when $\alpha=0$, the external interference of the loop soup disappears. Hence it reduces to extracting the CTMC from its own occupation time field. Another equivalent interpretation of (Q2) is to recover the loop soup with intensity $\alpha$ conditioned on its local time field. 

In \cite{lupu2016loop}, Lupu gave a `signed version' of Le Jan's isomorphism 
where the loop soup at intensity $\alpha=1/2$ is naturally coupled with a signed GFF. In \cite[Theorem 8]{lupu2019inverting}, Lupu, Sabot and Tarr\`es gave the corresponding version of the Ray-Knight identity (we call it a percolation Ray-Knight identity). Besides the identity of local time fields, by adding a percolation along with the Markovian path and finally sampling signs in every connected component of the percolation, one can start with a GFF with boundary condition $0$ and end up getting a GFF with boundary condition $\sqrt{2u}$. The inversion of the signed isomorphism is carried out in that paper and involves another type of self-interacting process \cite[\S3]{lupu2019inverting}.
It leads to the following question. 

(Q3) Can we generalize the percolation Ray-Knight identity to the case of loop soup with intensity $\alpha>0$? If so, how can we describe the process that inverts the percolation Ray-Knight identity? 

The analogous problems can also be considered for Brownian motion and Brownian loop soup. 
In \cite{LupuEJP657}, Lupu, Sabot and Tarr\`es constructed a  self-repelling diffusion out of a divergent Bass-Burdzy flow which inverts the Ray-Knight identity related to  GFF on the line and showed that the self-repelling diffusion is the fine mesh limit of the vertex repelling jump processes involved in the case of grid graphs on the line. More generally, it was shown (\cite{warren98,Aidekon}) that the self-repelling diffusion inverting the Ray-Knight identity on the positive half line can be constructed with the Jacobi flow. 
 
 We want to explore the relationship between the  repelling jump processes in (Q1)-(Q3)  and the self-repelling diffusions in \cite{Aidekon,1998BROWNIAN,warren98}. Our last question is 
 
 (Q4) Are the fine mesh limits of the repelling jump processes involved in (Q1)-(Q3) the self-repelling diffusions? 

In this paper, we will focus on nearest-neighbour CTMCs on a tree $\T$ and give a complete answer to the above  questions (Q1)-(Q4).  It is shown that the percolation Ray-Knight identity has a  simple form in this case, see Theorem \ref{percolationrayknight}.  We construct two kinds of  repelling jump processes, namely the vertex-repelling jump process and the percolation-vertex repelling jump process that invert the Ray-Knight identity and percolation Ray-Knight identity related to loop soup respectively, and show that the fine mesh limits of these  repelling jump processes are self-repelling diffusions  involved in the inversion of the Ray-Knight identity on the metric graph associated to $\T$. Besides, the inversion processes in the case of general graphs have been constructed in  another paper of ours in preparation. The inversion processes on general graphs involve non-local jump rates. So we restrict our discussion to the case of trees here.

The main feature of this paper is the intuitive way of constructing vertex repelling jump processes, which is rather  different from \cite{2016Inverting,lupu2019inverting}. It is enlightened by the recovery of the loop soup with intensity $1/2$ given its local time field (see \cite[\S2.5]{Werner2020} and \cite[Proposition 7]{Werner2016}), where Werner involves the crossings of loop soup that greatly simplify the recovery. In our case, the introduction of crossings translates the problem into a `discrete-time version' of inverting Ray-Knight identity, which can be 
stated as recovering the path of a discrete-time Markov chain conditioned on the number of crossings over each edge, see Proposition \ref{chainsjiaqiang}. This inversion has a surprisingly nice description, which can be seen as a `reversed' oriented version of the edge-reinforced random walk.
   
The paper is organized as follows.  In \S\ref{main666},  we introduce the Ray-Knight identity and percolation Ray-Knight identity related to loop soup and give the main results of the paper. In \S\ref{invertingidsection}-\ref{invertpercolation}, the vertex repelling jump process and the percolation-vertex repelling jump process are shown to invert the Ray-Knight identity and the percolation Ray-Knight identity respectively. In \S\ref{convergencesection}, we verify that the mesh limits of   repelling jump processes are the self-repelling diffusions. In Appendix \ref{selfinteracting}, we give the rigorous definition and basic properties of a class of processes, called processes with terminated jump rates, covering the repelling jump processes. 

\section{Statements of main results}\label{main666}
In this section, we  first recall the Ray-Knight identity. Then we introduce   a new Ray-Knight identity that we call percolation Ray-Knight identity. Finally, we present our results concerning the inversion of these identities and the fine mesh limit of the inversion processes.

\subsection*{Notations}
We will use the following notations throughout the paper. $\N=\{0,1,2,\cdots\}$, $\r^+=[0,\infty)$. For any stochastic process $R$ on some state space $\mathcal{S}$, for $t,u\ge 0$, $x\in \mathcal{S}$ and a specified point $x_0\in \mathcal{S}$, we denote by
\begin{itemize}
    \item  $L^{R}(t,x)$ the local time of $R$. When $\mathcal{S}$ is discrete, $L^R(t,x):=\int_0^t 1_{\{R_s=x\}}\d s$;
    \item $u\mapsto\tau_u^R$ the right-continuous inverse of $t\mapsto L^R(t,x_0)$;
    \item $T^R$ the lifetime of $R$;
    \item $H_x^R:=\inf\{t>0:R_t=x\}$ the hitting time of $x$;
    \item (When $\mathcal{S}$ is discrete) $J_i^R=J_i(R)$ the $i$-th jump time of $R$;
    \item $R_{[0,t]}:=(R_s:0\le s\le t)$ the path of $R$ up to time $t$.
\end{itemize}
The superscripts in above notations are omitted when $R=X$, the CTMC to be introduced immediately.

\subsection{Ray-Knight identity related to loop soup}\label{firstsubsection}
Consider a tree $\T$, i.e. a finite or countable connected  graph without cycle, with root $x_0$. Denote by $V$ its set of vertices, by $E$, resp. $\vec{E}$, its set of undirected, resp. directed, edges. Assume that any vertex $x\in V$ has finite degree. We  write $x<y$ if $x$ is an ancestor of $y$ and $x\sim y$ if $x$ and $y$ are neighbours. Denote by $\mathfrak{p}(x)$ the parent of $x$. For $x\sim y$, we simply write $xy:=(x,y)$ for a directed edge. The tree is endowed with a killing measure $\left(k_x\right)_{x\in V}$ on $V$ and conductances $\left(C_
{xy}\right)_{xy\in \vec{E}}$ on $\vec{E}$. We do not assume the symmetry of the conductances at the moment. Write $C^*_{xy}=C^*_{yx}=\sqrt{C_{xy}C_{yx}}$ for $x\sim y$.

Consider the CTMC $X=(X_t)_{0\le t< T^{X}}$ on $V$ which being at $x$, jumps to $y$ with rate $C_{xy}$ and is killed with rate $k_x$. $T^{X}$ is the time when the process is killed or explodes. Let  $\cL$ be the unrooted oriented loop soup with some fixed intensity $\alpha>0$ associated to $X$. (See for example \cite{lejan17} for the precise definition.)


Denote by $L_{\cdot}(\cL)$ the occupation time field of $\cL$, i.e. for all $x\in V$, $L_x(\cL)$ is the sum of the local time at $x$ of each loop in $\cL$. It is well-known that when $X$ is transient, $L_x(\cL)$ follows a Gamma($\alpha, G(x,x)^{-1}$) distribution\footnote{The density of  Gamma($a,b$) distribution at $x$ is $1_{\{x>0\}}\frac{b^a}{\Gamma(a)}x^{a-1}e^{-b x}$.}, where $G$ is the Green function of $X$; when $X$ is recurrent, $L_x(\cL)=\infty$ for all $x\in V$ a.s..  We first suppose $X$ is transient, which ensures that the conditional distribution of $\cL$ given $L_{x_0}(\cL)$ exists. For $u\ge 0$, let $\cL^{(u)}$ have the law of $\cL$ given $L_{x_0}(\cL)=u$. Without particular mention, we always assume that $X$ starts from $x_0$. 
The next proposition (see \cite[Proposition 3.7]{lupu2019inverting} or \cite[Proposition 5.3]{chang2016markov}) connects the path of $X$ with the loops in $\cL^{(u)}$ that visit $x_0$. 
\begin{proposition}\label{PDpar}
For any $u>0$, consider the path  $\left(X_t\right)_{0\le t\le\tau_u}$ conditioned on $\tau_u<T^{X}$. Let $D=\left(d_1, d_2\cdots\right)$ be a Poisson-Dirichlet partition with parameter $\alpha$, independent of $X$.
Set $s_n:=u\cdot\sum_{k=1}^{n} d_k$.
Then the family of unrooted loops
\[\left\{\pi\left(\big(X_{\tau_{s_{j-1}}+t}\big)_{0\le t\le \tau_{s_j}-\tau_{s_{j-1}}}\right):j\ge1\right\}\]
is distributed as all the loops in $\cL^{(u)}$ that visit $x_0$, where $\pi$ is the quotient map that maps a rooted loop to its corresponding unrooted loop. 
\end{proposition}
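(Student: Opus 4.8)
\emph{Proof plan.} The assertion does not really use the tree structure; I would prove it for an arbitrary transient $X$ and fixed root $x_0$, by combining the excursion decomposition of the loop measure at $x_0$ with the Gamma-subordinator representation of the Poisson--Dirichlet law. Write $G:=G(x_0,x_0)$. The first step is to describe the loops of $\cL$ that visit $x_0$: they form a Poisson point process with intensity $\alpha\mu_{x_0}$, where $\mu$ is the loop measure (so $\cL$ has intensity $\alpha\mu$) and $\mu_{x_0}$ is its restriction to loops through $x_0$, and the key input is the identity
\[
\alpha\mu_{x_0}=\int_0^\infty \alpha\,\ell^{-1}e^{-\ell/G}\,\Theta_\ell\,\d\ell ,
\]
where $\Theta_\ell$ denotes the law of the unrooted loop $\pi((X_t)_{0\le t\le\tau_\ell})$ conditioned on $\tau_\ell<T^{X}$. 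To establish this I would use the excursion theory of $X$ at $x_0$: on the event $\{\tau_\ell<T^{X}\}$ the path $(X_t)_{0\le t\le\tau_\ell}$ is a concatenation of the (necessarily returning) excursions of a Poisson point process on $[0,\ell]$, indexed by the local time at $x_0$, interspersed with the time spent at $x_0$, and it sits at $x_0$ at time $\tau_\ell$; closing it up yields a loop through $x_0$ whose local time at $x_0$ equals $\ell$. One then matches weights: for $k\ge1$ one has $\mu_{x_0}(\{k\text{ visits to }x_0\})=p^{k}/k$ with $p$ the return probability to $x_0$, and integrating the Poisson probability $\Theta_\ell(\{k\text{ visits}\})$ against $\alpha\ell^{-1}e^{-\ell/G}$ returns exactly $\alpha p^{k}/k$, the factor $1/k$ coming out of the Gamma integral; likewise $\int_0^\infty(1-e^{-\theta\ell})\alpha\ell^{-1}e^{-\ell/G}\d\ell$ is the Laplace exponent matching the $\mathrm{Gamma}(\alpha,G^{-1})$ law of $L_{x_0}(\cL)$ recalled above. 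The point I expect to be the main obstacle is that the loop obtained from $X$ is canonically rooted at its time-$0$ visit to $x_0$, whose holding time is size-biased because it straddles time $0$; one must check that, after forgetting the root, this size-biasing is exactly neutralised by the constraint that the holding times at $x_0$ sum to $\ell$, so that $\pi$ carries the rooted law onto the correct (cyclically symmetric) unrooted law. All of this is classical; compare \cite{lejan17,le2011markov} and \cite[Proposition 3.7]{lupu2019inverting}, \cite[Proposition 5.3]{chang2016markov}.

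Given this identity, the loops of $\cL$ through $x_0$ form a Poisson point process whose atoms are indexed by their local times $(\ell_i)$ at $x_0$: the point process $\sum_i\delta_{\ell_i}$ on $(0,\infty)$ has intensity $\alpha\ell^{-1}e^{-\ell/G}\d\ell$, and, conditionally on $(\ell_i)$, the loops are independent with loop $i$ of law $\Theta_{\ell_i}$; in particular $L_{x_0}(\cL)=\sum_i\ell_i$. Now $\sum_i\delta_{\ell_i}$ is precisely the jump process of a Gamma subordinator, so by the classical correspondence between the Gamma subordinator and the Poisson--Dirichlet distribution, $\sum_i\ell_i$ has the $\mathrm{Gamma}(\alpha,G^{-1})$ law, the ranked sequence $(\ell_i/\sum_j\ell_j)$ has law $\mathrm{PD}(\alpha)$, and these two are independent. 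Consequently, conditionally on $L_{x_0}(\cL)=u$, the multiset $\{\ell_i\}$ equals $\{u d_j:j\ge1\}$ in law with $D=(d_j)_{j\ge1}\sim\mathrm{PD}(\alpha)$, and the loops of $\cL^{(u)}$ that visit $x_0$ are, conditionally on $D$, independent with the $j$-th one of law $\Theta_{u d_j}$.

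It remains to identify this with the decomposition of $(X_t)_{0\le t\le\tau_u}$. Put $s_0:=0$ and $s_n:=u\sum_{k=1}^n d_k$, so that $\{(s_{n-1},s_n]\}_{n\ge1}$ partitions $(0,u]$. Since $D$ is independent of $X$, the strong Markov property of $X$ at the stopping times $\tau_{s_{j-1}}$ — at which $X=x_0$ — shows that, conditionally on $\{\tau_u<T^{X}\}$ and on $D$, the pieces $(X_{\tau_{s_{j-1}}+t})_{0\le t\le\tau_{s_j}-\tau_{s_{j-1}}}$ are independent, the $j$-th being distributed as $(X_t)_{0\le t\le\tau_{u d_j}}$ conditioned on $\tau_{u d_j}<T^{X}$; here the conditioning factorises because $\{\tau_u<T^{X}\}$ splits into the event that the $j$-th piece reaches local time $u d_j$ and an independent event concerning the remaining pieces, and because the excursion Poisson processes of the individual blocks superpose to that of $X$ on $(0,u]$. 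Hence $\pi$ applied to the $j$-th piece has law $\Theta_{u d_j}$, and by the previous paragraph the family indexed by $j\ge1$ has exactly the conditional law of the loops of $\cL^{(u)}$ that visit $x_0$. This is the assertion.
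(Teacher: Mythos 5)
The paper does not give its own proof of Proposition \ref{PDpar}: it only points to \cite[Proposition 3.7]{lupu2019inverting} and \cite[Proposition 5.3]{chang2016markov}. Your blind proof reconstructs an argument in the same spirit as those references — excursion theory of $X$ at $x_0$, the disintegration of the loop measure restricted to loops through $x_0$ by the local time at $x_0$, and the Gamma subordinator/Poisson--Dirichlet correspondence — and it is correct.

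The one step you flag as the potential obstacle — the apparent size-biasing of the root stay when $(X_t)_{0\le t\le \tau_\ell}$ is closed up — is indeed the only delicate point, and your assertion that it neutralises under $\pi$ is right. Concretely: conditional on $k$ returning excursions, their local-time positions on $[0,\ell]$ are order statistics of $k$ uniforms, giving $k+1$ spacings with law $\ell\cdot\mathrm{Dirichlet}(1,\dots,1)$; closing the path merges the first and last spacing, so the $k$ stay lengths of the rooted loop have law $\ell\cdot\mathrm{Dirichlet}(2,1,\dots,1)$, with the root stay size-biased. But this is precisely what one obtains by throwing $k$ uniform points on a circle of circumference $\ell$ and unrolling at a fixed reference point: the $k$ arcs are cyclically exchangeable ($\ell\cdot\mathrm{Dirichlet}(1,\dots,1)$), and the arc containing the reference is the size-biased one. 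Forgetting the root therefore recovers the rotationally invariant law, as needed. The remaining bookkeeping — matching $\mu_{x_0}(\{k\text{ excursions}\})=p^k/k$ against $\int_0^\infty \ell^{-1}e^{-\ell/G}\,\Theta_\ell(\{k\text{ excursions}\})\,\d\ell$, identifying $\alpha\ell^{-1}e^{-\ell/G}\,\d\ell$ as the L\'evy measure of the Gamma subordinator so that the normalised ranked jumps are $\mathrm{PD}(\alpha)$ and independent of the sum, and the factorisation of $\{\tau_u<T^X\}$ along the local-time blocks $(s_{j-1},s_j]$ via the strong Markov property — all goes through as you describe.
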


By Proposition \ref{PDpar}, we can take $\allowbreak\cL^{(0)}=\{\gamma\in\cL:\gamma \text{ does not}\linebreak[0]\text{ visit }x_0\}$ and $\cL^{(u)}$ to be the collection of loops in $\cL^{(0)}$ and  loops derived by partitioning $X_{[0,\tau_u]}$, where $X$ and $\cL^{(0)}$ are required to be independent. This special choice of $\left(\cL^{(u)}:u\ge 0\right)$ provides a continuous version of the conditional distribution. 
 So  we work on this version from now on. Note that the above definition also makes sense when $\alpha=0$. In this case, $\cL^{(0)}=\emptyset$ and $\cL^{(u)}$ consists of a single loop $\pi\left(X_{[0,\tau_u]}\right)$ (Poisson-Dirichlet partition with parameter $0$ is considered as the trivial partition $D=(1,0,0,\cdots)$). So we also allow $\alpha=0$ from now on. The generalized second Ray-Knight theorem related to loop soup reads as follows, which is direct from the above definition.

\begin{proposition}[Ray-Knight identity]\label{rayknighttransient}
Let $\cL^{(0)}$ and $X$ be independent. Then for $u>0$, conditionally on $\tau_u<T^{X}$,
\[\left(L_x(\cL^{(0)})+L(\tau_u,x)\right)_{x\in V} \text{ has the same law as } \left(L_x(\cL^{(u)})\right)_{x\in V}.\]
\end{proposition}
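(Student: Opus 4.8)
\emph{Proof strategy.} The plan is to read the statement off the explicit construction of the version $\bigl(\cL^{(u)}:u\ge0\bigr)$ fixed in the paragraph preceding the proposition, so that the equality in law is in fact an almost sure identity for one canonical coupling. I would begin by isolating the only two structural properties of occupation time fields that are used: (a) $L_x(\cdot)$ is additive under superposition of loop configurations, $L_x(\mathcal A\sqcup\mathcal B)=L_x(\mathcal A)+L_x(\mathcal B)$; and (b) the occupation time at $x$ of an unrooted loop coincides with that of any rooted representative, because re-rooting merely permutes the pieces of the time interval over which the loop is parametrised. Both are immediate from the definitions, but stating them makes the rest transparent.

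Next I would use that, by the very definition of the chosen version, on the event $\{\tau_u<T^X\}$ the configuration $\cL^{(u)}$ is the disjoint union of $\cL^{(0)}$ (the loops of the soup avoiding $x_0$) and the loops $\pi\bigl((X_{\tau_{s_{j-1}}+t})_{0\le t\le\tau_{s_j}-\tau_{s_{j-1}}}\bigr)$, $j\ge1$, obtained by cutting $X_{[0,\tau_u]}$ at the instants $\tau_{s_0}=0\le\tau_{s_1}\le\tau_{s_2}\le\cdots$ associated with an independent Poisson--Dirichlet($\alpha$) sequence, with $X$ and $\cL^{(0)}$ independent. Since $s_j=u\sum_{k\le j}d_k\uparrow u$, the intervals $[\tau_{s_{j-1}},\tau_{s_j}]$ exhaust $[0,\tau_u]$ (degenerate ones contributing trivial loops), so properties (a), (b) give, for every $x\in V$ and almost surely on $\{\tau_u<T^X\}$,
\begin{align*}
L_x\bigl(\cL^{(u)}\bigr) &= L_x\bigl(\cL^{(0)}\bigr)+\sum_{j\ge1}\int_{\tau_{s_{j-1}}}^{\tau_{s_j}}1_{\{X_s=x\}}\d s\\
&= L_x\bigl(\cL^{(0)}\bigr)+\int_0^{\tau_u}1_{\{X_s=x\}}\d s = L_x\bigl(\cL^{(0)}\bigr)+L(\tau_u,x).
\end{align*}
As $\cL^{(0)}$ and $X$ are independent in this construction, the right-hand side realizes precisely the left-hand side of the asserted identity, which therefore has the same (conditional) law as $\bigl(L_x(\cL^{(u)})\bigr)_{x\in V}$. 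The degenerate case $\alpha=0$ is covered without change: then $\cL^{(0)}=\emptyset$, the partition is trivial, and the display reads $L(\tau_u,x)=L_x(\cL^{(u)})$.

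The step I regard as carrying whatever content there is — and the reason the proposition is only ``direct from the definition'' once that content has been absorbed — is the justification that the constructed $\cL^{(u)}$ genuinely has the conditional law of $\cL$ given $\{L_{x_0}(\cL)=u\}$, consistently in $u$. That is exactly Proposition \ref{PDpar}, combined with the standard facts that the loops of the soup through $x_0$ and those avoiding $x_0$ form independent Poisson processes and that conditioning on $L_{x_0}(\cL)$ affects only the former family. If one prefers to keep $\cL^{(u)}$ as an abstract regular conditional law, the same computation performed under that conditional distribution still yields the claim, since the additivity identity above is a measurable function of the pair $(\cL^{(0)},X)$. I do not anticipate a real obstacle; the only genuine technicality is verifying $\tau_{s_j}\uparrow\tau_u$ (and not to some strictly smaller value), which holds because a fixed level $u$ is almost surely not a level at which $X$ starts an excursion away from $x_0$.
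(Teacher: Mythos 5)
Your argument is correct and is exactly the paper's intent: the paper's ``proof'' is the single phrase ``which is direct from the above definition,'' and you have simply spelled out why, via additivity of occupation times under superposition and the observation that, for the version of $\cL^{(u)}$ fixed just before the proposition, the identity holds almost surely rather than merely in law. Same approach, just written out in full.
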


\subsection{Percolation Ray-Knight identity related to loop soup}\label{percoray}
In this part, we assume the symmetry of the conductances (i.e. $C_{xy}=C_{yx}$ for any $x\sim y$) and that $0<\alpha<1$. An element $O$ in $\{0,1\}^E$ is also called a configuration on $E$. When $O(e)=1$, the edge $e$ is thought of as being open. $O$ is also used to stand for  the set of open edges induced by $O$ without particular
mention. A percolation on $E$ refers to a random configuration on $E$.


\begin{definition}\label{alphapercolation}
For $u\ge 0$, let $\O^{(u)}$ be the  percolation on $E$ such that, conditionally on $L_\cdot (\cL^{(u)})=\ell$: 
\begin{longlist}
\item  edges are open independently; 
\item the edge $\{x,y\}$ is open with probability
\begin{align}\label{zez}
\dfrac{I_{1-\alpha}\left(2C_{xy}\sqrt{\ell_x\ell_y}\right)}{I_{\alpha-1}\left(2C_{xy}\sqrt{\ell_x\ell_y}\right)},
\end{align}
where $I_\nu$ is the modified Bessel function: for $\nu\ge -1$ and $z\ge 0$,
\[I_\nu(z)=(z/2)^\nu\sum_{n=0}^\infty\dfrac{(z/2)^{2n}}{n!~\Gamma(n+\nu+1)}.\]
\end{longlist}
\end{definition}




 Consider the loop soup $\cL^{(0)}$ and the percolation $\O^{(0)}$. For $t\ge 0$, define the aggregated local times
\begin{align}\label{lambdatdef}
\phi_t(x):=L_x(\cL^{(0)})+L(t,x),~x\in V. 
\end{align}

The process $\cX:=(X_t,\O_t)_{0\le t\le \tau_u}$ is defined as follows: $\left(X_0,\O_0\right):=\left(x_0,\O^{(0)}\right)$. Conditionally on $L_\cdot(\cL^{(0)})$ and $(\cX_s:0\le s\le t)$, if $X_t=x$ and $y\sim x$, then 
\begin{itemize}
\item $X_t$ jumps to its neighbour $y$ with rate $C_{xy}$ and $\O_t(\{x,y\})$ is then set to $1$ (if it was not already).
\item In case $\O_t(\{x,y\})=0$, $\O_t(\{x,y\})$ is set to $1$ without $X_t$ jumping with rate
\begin{align}\label{o1}
C_{xy}\sqrt{\dfrac{\phi_t(y)}{\phi_t(x)}}\cdot\dfrac{K_{\alpha}\big( C_{xy}\sqrt{\phi_t(x)\phi_t(y)}\big)}{K_{1-\alpha}\big(C_{xy}\sqrt{\phi_t(x)\phi_t(y)}\big)},
\end{align}
where 
$K_\nu(z)=\dfrac{1}{2}\Gamma(\nu)\Gamma(1-\nu)\left(I_{-\nu}(z)-I_\nu(z)\right)$.
\end{itemize}

\begin{theorem}[Percolation Ray-Knight identity]\label{percolationrayknight}
With the notations above, conditionally on $\tau_u<T^{X}$, $\allowbreak(\phi_{\tau_u},\linebreak[0]\O_{\tau_u})$ has the same law as $(L_\cdot(\cL^{(u)}), \O^{(u)})$.
\end{theorem}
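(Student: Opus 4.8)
The plan is to work throughout on the event $\{\tau_u<T^{X}\}$ and to verify two things: that the first marginals of $(\phi_{\tau_u},\O_{\tau_u})$ and $(L_\cdot(\cL^{(u)}),\O^{(u)})$ agree, and that the conditional law of the percolation given the occupation field agrees. The first is immediate: by construction $\phi_{\tau_u}=L_\cdot(\cL^{(0)})+L(\tau_u,\cdot)$, so Proposition \ref{rayknighttransient} already gives $\phi_{\tau_u}\stackrel{d}{=}L_\cdot(\cL^{(u)})$. Hence the whole content reduces to showing that, conditionally on $\phi_{\tau_u}=\ell$, the edges are open independently and $\{x,y\}$ is open with probability $I_{1-\alpha}(2C_{xy}\sqrt{\ell_x\ell_y})/I_{\alpha-1}(2C_{xy}\sqrt{\ell_x\ell_y})$, i.e.\ with exactly the conditional law of Definition \ref{alphapercolation}.

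\emph{Reduction to one edge.} Fix $e=\{x,y\}$ with $x=\mathfrak p(y)$ and let $\T_y$ be the subtree of $\T$ hanging below $e$. Because $\T$ is a tree, $X$ crosses $e$ iff it hits $y$; moreover the loop soup $\cL^{(0)}$, the initial configuration $\O^{(0)}$, and the excursions of $X$ into $\T_y$ decouple across $e$ by the strong Markov property at the successive visits of $X$ to $y$. Exploiting this, I would show that conditionally on $\phi_{\tau_u}=\ell$ the family $\{\O_{\tau_u}(e)\}_{e\in E}$ is independent, with the law of $\O_{\tau_u}(e)$ depending on $(\ell_x,\ell_y)$ alone; this turns the problem into computing $\p(\O_{\tau_u}(e)=0\mid\phi_{\tau_u}=\ell)$ for a single edge. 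For a countable tree one first carries out the argument on an exhaustion by finite subtrees and then passes to the limit.

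\emph{The single-edge computation.} Split on whether $X_{[0,\tau_u]}$ hits $y$. If it does, $X$ crosses $e$, hence $\O_{\tau_u}(e)=1$. On the complementary event $\phi_\cdot(y)\equiv L_y(\cL^{(0)})=:b$ is frozen and $\{\O_{\tau_u}(e)=0\}$ means $\O^{(0)}(e)=0$ and no spontaneous opening of $e$ occurred, which given the path has conditional probability
\[
\exp\!\Bigl(-\!\int_0^{\tau_u}\!\mathbf 1_{\{X_t=x\}}\,C_{xy}\sqrt{\tfrac{\phi_t(y)}{\phi_t(x)}}\;\frac{K_\alpha\bigl(C_{xy}\sqrt{\phi_t(x)\phi_t(y)}\bigr)}{K_{1-\alpha}\bigl(C_{xy}\sqrt{\phi_t(x)\phi_t(y)}\bigr)}\,dt\Bigr).
\]
The substitutions $r=\phi_t(x)$ (so $dr=\mathbf 1_{\{X_t=x\}}\,dt$, with $r$ running from $L_x(\cL^{(0)})$ to $\ell_x$) and then $z=C_{xy}\sqrt{rb}$ collapse the exponent to $2\int_{z_0}^{z_1}K_\alpha(z)/K_{1-\alpha}(z)\,dz$ with $z_0=C_{xy}\sqrt{L_x(\cL^{(0)})\,b}$ and $z_1=C_{xy}\sqrt{\ell_x\,b}$, and the modified-Bessel identity
\[
\frac{d}{dz}\bigl[z^{1-\alpha}K_{1-\alpha}(z)\bigr]=-z^{1-\alpha}K_\alpha(z),\qquad\text{i.e.}\qquad \frac{d}{dz}\log\bigl[z^{1-\alpha}K_{1-\alpha}(z)\bigr]=-\frac{K_\alpha(z)}{K_{1-\alpha}(z)},
\]
evaluates this integral in closed form. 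Feeding in the conditional law of $\bigl(L_x(\cL^{(0)}),L_y(\cL^{(0)})\bigr)$ and of the event $\{X\text{ misses }y\}$ given $\phi_{\tau_u}=\ell$ — which follow from the Gamma law of loop occupation times, Proposition \ref{PDpar}, and the excursion theory of $X$ at $y$ — and re-summing against these densities, using $K_\nu=\tfrac12\Gamma(\nu)\Gamma(1-\nu)(I_{-\nu}-I_\nu)$ and the $z\to0$ behaviour of $I_\nu$ to pin down constants, should return exactly $1-I_{1-\alpha}(2C_{xy}\sqrt{\ell_x\ell_y})/I_{\alpha-1}(2C_{xy}\sqrt{\ell_x\ell_y})$.

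\emph{Main obstacle.} The calculus above — the change of variables and the Bessel telescoping — is the easy part. The real work, and where I expect the difficulty to lie, is the probabilistic bookkeeping: rigorously establishing the conditional independence of $\{\O_{\tau_u}(e)\}_e$ given the occupation field, and, on a single edge, cleanly disentangling the three ways $e$ can become open (the draw of $\O^{(0)}$, a genuine crossing by $X$, a spontaneous opening) while conditioning on $\phi_{\tau_u}$. This is precisely the step for which the ``crossing'' viewpoint is designed: Proposition \ref{chainsjiaqiang} identifies the joint law of the edge-crossing numbers of the loop soup given its occupation field (the discrete-time inversion), and once the crossing numbers are under control, the spontaneous-opening Poisson computation together with the Bessel identity above closes the argument.
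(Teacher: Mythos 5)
Your proposal takes a genuinely different route from the paper. The paper does not attempt a direct integration of the spontaneous-opening rate against the conditional law of $(L_x(\cL^{(0)}),L_y(\cL^{(0)}),\{X\text{ misses }y\})$. Instead it introduces the metric graph $\tG$ and a metric-graph Brownian motion $B$ whose print on $V$ is $X$, sets $\widetilde\phi_t$ to be the aggregated local time of $\wL^{(0)}$ and $B$ on $\tG$, and defines $\widetilde\O_t(e)=1_{\{\widetilde\phi_t\text{ has no zero on }I_e\}}$. With this representation, Proposition~\ref{cxcx}(1) is almost automatic: the Ray-Knight identity for $B$ gives $\widetilde\phi_{\tau_u}\stackrel{d}{=}L_\cdot(\wL^{(u)})$, and Lemma~\ref{ool} (proved via BESQ bridge/first-zero computations and the integral representation of $K_\nu$) shows that, given the vertex occupation field, the zero sets on different edge-intervals are conditionally independent with $\p(\text{no zero on }I_e)=I_{1-\alpha}/I_{\alpha-1}$. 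Proposition~\ref{cxcx}(2) then checks that $(X_t,\widetilde\O_t)$ has exactly the rates defining $\cX$, and the theorem follows by identification of the two processes.

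The two gaps you flag as the ``main obstacle'' are, in fact, precisely the parts the paper resolves by switching to the metric-graph picture, and I don't see how your sketch closes them. First, the conditional independence of $\{\O_{\tau_u}(e)\}_e$ given $\phi_{\tau_u}=\ell$ does \emph{not} follow from the strong Markov property at visits to $y$ alone: conditioning on the terminal field couples the branches, and the clean way to see independence is that, conditionally on the vertex values, $\widetilde\phi_{\tau_u}$ on distinct edge-intervals consists of independent squared Bessel bridges (which is exactly what the paper uses). Second, the ``feeding in and re-summing'' step is not a formality; it requires the conditional law of $\{X\text{ misses }y\}$ and of $L_x(\cL^{(0)})$ given $\phi_{\tau_u}=\ell$, and to carry this through one essentially needs the crossing-number decomposition of Proposition~\ref{initialcrossing} (note: \emph{not} Proposition~\ref{chainsjiaqiang}, which concerns the conditional jump chain given crossings — you have cited the wrong result). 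Your Bessel telescoping is the right identity (it is exactly the one used to verify Proposition~\ref{cxcx}(2), via $[z^\nu K_\nu(z)]'=-z^\nu K_{\nu-1}(z)$ and $K_{-\nu}=K_\nu$), but with the substitution $z=2C_{xy}\sqrt{rb}$ the spurious factor $2$ in your exponent disappears and the surviving probability becomes $\bK_{1-\alpha}(z_1)/\bK_{1-\alpha}(z_0)$, matching Lemma~\ref{olo}; as written your computation would give the square of this, which cannot be right. In short: the calculus is sound, but the probabilistic scaffolding that you defer is the theorem, and the metric-graph Brownian motion — absent from your proposal — is the device that supplies it.
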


Theorem \ref{percolationrayknight} will be proved in \S\ref{invertpercolation}. The process $(\O_t)_{0\le t\le \tau_u}$ has a natural interpretation in terms of loop soup on a metric graph. Specifically, let $\tG$ be the metric graph associated to $\T$, where  edges are considered  as intervals, so that one can construct a Brownian motion $B$ which moves continuously on $\tG$, and whose print on the vertices is distributed as $X$ (see \S\ref{invertpercolation} for details). Let $\wL$ be the unrooted oriented loop soup with   intensity $\alpha>0$ associated to  $B$. 
Starting with the loop soup $\wL$ with boundary condition $0$ at $x_0$ and an independent Brownian motion $B$ starting at $x_0$, one can consider the field $(\widetilde{\phi}_t(x),\, x\in \tG)$ which is the aggregated local time at $x$ of the loop soup $\wL$ and the Brownian motion $B$ up to time $t$. Then one can construct $\O_t$ as the configuration where an edge $e$ is open if the field $\widetilde{\phi}_t$ does not have any zero on the edge $e$. See Proposition~\ref{cxcx}.

\begin{remark}
Since the laws involved in the Ray-Knight and percolation Ray-Knight do not depend on $k_{x_0}$, we can generalize the above results to the case where $X$ is recurrent. 
\end{remark}

\subsection{Inversion of Ray-Knight identities}\label{qwer666}  

 To ease the presentation, write  $\phi^{(u)}$ for $L_\cdot(\cL^{(u)})$ from now on. Theorem \ref{percolationrayknight} allows us to identify $(\phi^{(u)},\O^{(u)})$ with $\allowbreak(\lambda_{\tau_u},\linebreak[0]\O_{\tau_u})$, which we will do. 
\begin{definition}
We call the triple 
$$
\left(\phi^{(0)}, X_{[0,\tau_u]},\phi^{(u)}\right)
$$

\noindent a Ray-Knight triple (with parameter $\alpha$) associated to $X$. Similarly, recalling the notation $\cX_t=(X_t,\O_t)$, the triple 
$$
\left((\phi^{(0)}, \O^{(0)}), \cX_{[0,\tau_u]},(\phi^{(u)}, \O^{(u)}) \right)
$$

\noindent will be called a percolation Ray-Knight triple. 
\end{definition}

Inverting the Ray-Knight, resp. percolation Ray-Knight identity, is to deduce the conditional law of $X_{[0,\tau_u]}$, resp. $\cX_{[0,\tau_u]}$, given $\phi^{(u)}$, resp. $(\phi^{(u)}, \O^{(u)})$.


We introduce an adjacency relation on $V\times \{0,1\}^E$: For $(x,O_1), (y,O_2)\in V\times \{0,1\}^E$, $(x,O_1)$ and $(y,O_2)$ are neighboured if they satisfy either one of the followings: (1) $O_1=O_2$ and $x\sim y$; (2) $O_1$ and $O_2$ differ by exactly one edge $e$ and $x,y\in e$. This defines a graph $\T^\prime$ with finite degree and $\cX_t$ is a nearest-neighbour jump process on $\T^\prime$.

The inversion of Ray-Knight identities is expressed in terms of  several processes defined by jump rates. Readers are referred to Appendix \ref{selfinteracting} for the rigorous definition and basic properties of such processes. All the continuous-time processes defined below are assumed to be right-continuous, minimal, nearest-neighbour jump processes with a finite or infinite lifetime. The collection of all such sample paths on $\T$ (resp. $\T^\prime$)  is denoted by $\Omega$ (resp. $\Omega^\prime$).


Given $\lambda\in (\r^+)^V$, set for $x,y\in V$ with $x\sim y$ and $\omega\in \Omega$ with $T^\omega>t$,
\begin{align}\label{ratesymbol}
\begin{split}
   &\Lambda_t(x)=\Lambda_t(\lambda,\omega)(x)=\Lambda_t(\lambda,\omega_{[0,t]})(x):=\lambda(x)-\int_0^t 1_{\{\omega_s=x\}}\d s,\\
   &\varphi_t(xy)= \varphi_t(\lambda,\omega)(xy)=\varphi_t(\lambda,\omega_{[0,t]})(xy):=2C^*_{xy}\sqrt{\Lambda_t(x,\omega) \Lambda_t(y,\omega)}.\\
   \end{split}
\end{align}
Intuitively, $\lambda$ is viewed as the initial local time field and $\Lambda_t$ stands for the remaining local time field while running the process $\omega$ until time $t$.  Although these quantities depend on $\omega$, we will systematically drop the notation $\omega$ for sake of concision whenever the process  is clear from the context. 
 

\subsubsection{Inversion of Ray-Knight identity}\label{s:inversionrk}
Keep in mind that most of the following definitions have a parameter $\alpha\ge 0$, which is always omitted in notations for simplicity.

Set
\begin{align*}
    \mathfrak{R}=\begin{cases}
    \big\{\lambda\in (\r^+)^V:\lambda(x)>0\ \forall x\in V\big\},&\text{ if }\alpha>0;\\
    \big\{\lambda\in (\r^+)^V:\lambda(x_0)>0,\,\text{supp}(\lambda)\text{ is connected and finite}\big\},&\text{ if }\alpha=0.
\end{cases}
\end{align*}
Note that with probability $1$, $\phi^{(u)}\in \mathfrak{R}$. We will take $\mathfrak{R}$ as the range of $\phi^{(u)}$ and consider only the law of $X_{[0,\tau_u]}$ given $\phi^{(u)}=\lambda\in\mathfrak{R}$. \\

Now define the \textit{vertex repelling jump process} we are interested in. Given $\lambda\in\mathfrak{R}$, its distribution $\p^\lambda_{x_0}$ on $\Omega$ verifies that the process $\omega=(\omega_t,0\le t< T^\omega)$ starts at $\omega_0=x_0$, behaves such that
\begin{itemize}
\item conditionally on $t<T^\omega$ and $\left(\omega_s:0\le s\le t\right)$ with $\omega_t=x$, it jumps to a neighbour $y$ of $x$ with rate $r^\lambda_t\big(x,y,\omega_{[0,t]}\big)$;
\item (resurrect mechanism). every time $\lim\limits_{s\rightarrow t-}\Lambda_s(\omega_s)=0$ and $\omega_{t-}\neq x_0$, it jumps to $\mathfrak{p}(x)$ at time $t$,
\end{itemize}
and stops at time $T^\omega=T^\lambda(\omega)$, 
where for $x,y\in V$ with $x\sim y$ and $\omega\in \Omega$ with $T^\omega>t$,
\begin{small}
\begin{align}\label{xxrate}
\begin{split}
    r^\lambda_t(x,y,\omega_{[0,t]})&:=\left\{\begin{aligned}
    &C^*_{xy}\sqrt{\dfrac{\Lambda_t(y)}{\Lambda_t(x)}}\cdot\dfrac{I_{\alpha-1}\left(\varphi_t(xy)\right)}{I_{\alpha}\left(\varphi_t(xy)\right)},&\text{ if $y=\mathfrak{p}(x)$},\\
    &C^*_{xy}\sqrt{\dfrac{\Lambda_t(y)}{\Lambda_t(x)}}\cdot\dfrac{I_{\alpha}\left(\varphi_t(xy)\right)}{I_{\alpha-1}\left(\varphi_t(xy)\right)},&\text{  if $x=\mathfrak{p}(y)$},\\
    \end{aligned}\right.
    \end{split}
\end{align}
\end{small}and $T^\lambda=T^\lambda_0\wedge T^\lambda_\infty$ with
\begin{align*}
    T^\lambda_0(\omega)&:=\sup\big\{t\ge 0:\Lambda_t(x_0)>0\big\};\\
    T^\lambda_\infty(\omega)&:=\sup\big\{t\ge 0:\omega_{[0,t]} \text{ has finitely many jumps}\big\}.
\end{align*}
Here $T^\lambda_0$ represents the time when the local time at $x_0$ is exhausted. The process can be roughly described as follows: the total local time available at each vertex is given at the beginning. As the process runs, it eats the local time. The jump rates are given in terms of the remaining local time. It finally stops whenever the available local time at $x_0$ is used up or an explosion occurs. 

\begin{remark}\label{Xdifferent}
It holds that $I_1=I_{-1}$ and as $z\downarrow 0$,
\begin{align*}
    I_\nu(z)\sim\begin{cases}
    \Gamma(\nu+1)^{-1}(\frac{1}{2}z)^\nu,&\text{ if }\nu>-1;\\
    \frac{1}{2}z,&\text{ if }\nu=-1.\end{cases}
\end{align*}
Hence,
\begin{align}\label{zgoto}
 \text{for $\alpha>0$},~I_{\alpha-1}(z)/I_\alpha(z)\sim \alpha z^{-1};\quad  I_{-1}(z)/I_0(z)\sim z.
\end{align}
The different behaviors in \eqref{zgoto} for $\alpha>0$ and $\alpha=0$ indicate different behaviors of the vertex repelling jump process in the two cases. Intuitively speaking, when $\alpha>0$, as the process tends to stay at some $x\neq x_0$, the jump rate to $\mathfrak{p}(x)$ goes to infinity. So it actually does not need the resurrect mechanism in this case, and there is still  some positive local time left at any vertex other than $x_0$ at the end.
While in the case of $\alpha=0$, the process exhibits a different picture. Contrary to the case of $\alpha>0$, the jump rate to the children goes to infinity when $\alpha=0$. So intuitively the process is `pushed' to the boundary of $\text{supp}(\lambda)$ and exhausts the available local time at one of the boundaries. To guide the process back to $x_0$, we decide to resurrect it by letting it jump to the parent of the vertex. In this way, the process finally ends up exhausting all the available local time at each vertex.
\end{remark}

By Remark \ref{Xdifferent}, we can see that the vertex repelling jump process is in line with our intuition about the inversion of Ray-Knight identity. Since in the case of $\alpha>0$, the given time field includes the external time field $\phi^{(0)}$, the inversion process will only use part of the local time at any vertex other than $x_0$ and end up exhausting the local time at $x_0$. While in the case of $\alpha=0$, the given time field is exactly the local time field of the CTMC itself, the inversion process will certainly use up the local time at each vertex before  finally stopping at $x_0$.

\begin{theorem}\label{invertfinite}
Suppose $\left(\phi^{(0)}, X_{[0,\tau_u]}, \phi^{(u)}\right)$ is a Ray-Knight triple associated to $X$. For any $\lambda\in\mathfrak{R}$, the conditional distribution of $X_{[0,\tau_u]}$ given $\tau_u<T^{X}$ and $\phi^{(u)}=\lambda$ is $\p^\lambda_{x_0}$. 
\end{theorem}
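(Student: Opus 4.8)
The plan is to reduce Theorem~\ref{invertfinite} to its discrete-time counterpart, Proposition~\ref{chainsjiaqiang}, by passing through the crossing numbers of the loop soup $\cL^{(u)}$. Fix $\lambda\in\mathfrak R$; we may assume $\lambda(x_0)=u$, since $\phi^{(u)}(x_0)=L_{x_0}(\cL^{(u)})=u$ almost surely. For $x\neq x_0$ let $N_x$ be the number of crossings of the edge $\{x,\mathfrak p(x)\}$ by $\cL^{(u)}$, counted in the single direction away from $x_0$. Since $\T$ is a tree, every loop of $\cL^{(u)}$, as well as the stopped path $X_{[0,\tau_u]}$ (which starts at $x_0$ and almost surely ends at $x_0$), crosses a given edge equally often in both directions; thus the discrete trace of $\cL^{(u)}$ on $\T$ is encoded by $(N_x)$ together with the way these crossings are paired into loops at the vertices.

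First I would determine the conditional law of $(N_x)_{x\neq x_0}$ given $\phi^{(u)}=\lambda$. Using that $\cL^{(u)}$ is distributed as $\cL$ conditioned on $L_{x_0}(\cL)=u$ (Proposition~\ref{PDpar}) together with Le Jan's isomorphism, one computes the joint law of $\bigl(\phi^{(u)},(N_x)\bigr)$; on a tree it factorizes into per-vertex factors, depending on $\phi^{(u)}(x)$ alone, and per-edge factors, depending on $\phi^{(u)}(x)$, $\phi^{(u)}(\mathfrak p(x))$ and $N_x$. Hence, conditionally on $\phi^{(u)}=\lambda$, the $N_x$ are independent and, for each edge with $\lambda(x)>0$,
\[
\p\bigl(N_x=n\mid\phi^{(u)}=\lambda\bigr)\ \propto\ \frac{\bigl((C^*_{x\mathfrak p(x)})^{2}\,\lambda(x)\,\lambda(\mathfrak p(x))\bigr)^{\,n}}{n!\,\Gamma(n+\alpha)},\qquad n\ge0,
\]
the normalizing constant being $I_{\alpha-1}\bigl(2C^*_{x\mathfrak p(x)}\sqrt{\lambda(x)\lambda(\mathfrak p(x))}\bigr)=I_{\alpha-1}(\varphi_0(x\mathfrak p(x)))$ up to an explicit power of the argument. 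When $\alpha=0$ the term $n=0$ has weight $0$ because $\Gamma(0)=\infty$, so $N_x\ge1$ is automatically forced along every edge of $\mathrm{supp}(\lambda)$; this is the source both of the dichotomy between the two Bessel ratios in \eqref{xxrate} and of the resurrect mechanism. This step is exactly where the link between random networks and loop soups enters.

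Next I would invoke the discrete-time inversion of Proposition~\ref{chainsjiaqiang}: conditionally on $\phi^{(u)}=\lambda$ and on $(N_x)$, the jump chain of $X_{[0,\tau_u]}$ — obtained from the loops of $\cL^{(u)}$ that visit $x_0$, re-concatenated at $x_0$ in Poisson--Dirichlet order — is a ``reversed'' oriented edge-reinforced walk determined by the crossing numbers $(N_x)$. The remaining continuous randomness, namely the holding times, entangled with the loop-soup contribution $\phi^{(0)}=\lambda-L(\tau_u,\cdot)$ through this identity, is then reinstated using the Markov property of $X$ and the occupation-field density of $\cL^{(0)}$ on the tree with $x_0$ made absorbing. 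Assembling these ingredients gives an explicit, a priori non-Markovian, description of the law of $X_{[0,\tau_u]}$ given $\phi^{(u)}=\lambda$, which must finally be recognized as $\p^\lambda_{x_0}$.

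This last identification is the main obstacle. The crux is a ``Markovianization'': along a partial trajectory $\omega_{[0,t]}$ with $\omega_t=x$, the conditional law of everything not yet revealed (the remaining crossings together with the not-yet-determined loop-soup part) is again of the product/Bessel form of the first step, but with $\lambda$ replaced by the residual field $\Lambda_t$ of \eqref{ratesymbol}; reading off the instantaneous jump rate across $\{x,y\}$ as the size-biasing of this law and simplifying with the Bessel recursion $I_\nu'(z)+\tfrac{\nu}{z}\,I_\nu(z)=I_{\nu-1}(z)$ returns precisely $C^*_{xy}\sqrt{\Lambda_t(y)/\Lambda_t(x)}\,I_\alpha(\varphi_t(xy))/I_{\alpha-1}(\varphi_t(xy))$ for a downward jump ($x=\mathfrak p(y)$) and $C^*_{xy}\sqrt{\Lambda_t(y)/\Lambda_t(x)}\,I_{\alpha-1}(\varphi_t(xy))/I_\alpha(\varphi_t(xy))$ for an upward jump ($y=\mathfrak p(x)$), i.e.\ \eqref{xxrate}. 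Concretely this amounts to reorganizing a product of conductances, exponential holding factors and loop-soup weights into a ``running'' exponential-of-rates form that telescopes along the edges of the tree, and I expect it to be the technically heaviest part. Two complements close the proof: the well-posedness of $\p^\lambda_{x_0}$ and the fact that its trajectory accumulates only at $T^\lambda=T^\lambda_0\wedge T^\lambda_\infty$ (so that no probability mass is lost), which follow from Appendix~\ref{selfinteracting}; and a separate inspection of the two regimes — for $\alpha>0$, by \eqref{zgoto} the rate toward $\mathfrak p(x)$ diverges before $\Lambda_t(x)$ reaches $0$, so the resurrect mechanism never fires and $\Lambda_{T^\lambda}=\phi^{(0)}$ stays positive off $x_0$, whereas for $\alpha=0$ one has $\cL^{(0)}=\emptyset$, $N_x\ge1$ throughout $\mathrm{supp}(\lambda)$, and the resurrect mechanism genuinely steers the process back toward $x_0$ once it has exhausted the local time at a boundary vertex — in accordance with Remark~\ref{Xdifferent}.
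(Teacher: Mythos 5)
Your roadmap is essentially the paper's: introduce the crossing field $\cN^{(u)}$ and compute its conditional law given $\phi^{(u)}=\lambda$ (this is Proposition~\ref{initialcrossing}, and your Bessel formula is the right one, since $(z/2)^{2n}=\bigl((C^*_{xy})^2\lambda(x)\lambda(y)\bigr)^n$ gives Bessel$(\alpha-1,\varphi_0(x\mathfrak p(x)))$); then reconstruct the jump chain from the crossings via Proposition~\ref{chainsjiaqiang} and the holding times from the remaining continuous randomness (the paper's Proposition~\ref{addjumptimes}); then ``Markovianize'' by showing that the conditional law of the unrevealed crossings along a partial path is again of product/Bessel form with the residual field $\Lambda_t$, and read off the jump rates by averaging over the Bessel weight. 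This is exactly the chain Proposition~\ref{initialcrossing} $\Rightarrow$ Theorem~\ref{rayknightcrossing} $\Rightarrow$ Lemma~\ref{condilaw} $\Rightarrow$ Proposition~\ref{samelaw}. You also correctly identify the Markovianization step as the technical crux.

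There is, however, a concrete imprecision in the crux that would make the computation fail as written. You describe the conditional law of the remaining crossings along $\omega_{[0,t]}$ with $\omega_t=x$ as ``again of the product/Bessel form of the first step, but with $\lambda$ replaced by $\Lambda_t$'' — i.e.\ a \emph{sourceless} $\alpha$-random network (Bessel$(\alpha-1)$ on every edge). That cannot be right when $x\ne x_0$: the unfinished trajectory has produced one extra $x\to\mathfrak p(x)$ crossing not yet matched, so the remaining crossing field must be a network with sources $(x_0,x)$; and, as in Definition~\ref{rnet}, along every edge of the path $\mathfrak p(x_0,x)$ the Bessel index shifts from $\alpha-1$ to $\alpha$, while off that path it stays $\alpha-1$. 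This is precisely Lemma~\ref{condilaw}, and it is what produces the asymmetry you need: at $x$ the rate toward the parent is read from a Bessel$(\alpha)$ variable shifted by $+\alpha$ (giving $\sum_k(k+\alpha)b_{\alpha,\varphi}(k)=(\varphi/2)I_{\alpha-1}(\varphi)/I_\alpha(\varphi)$), whereas the rate toward a child is read from a Bessel$(\alpha-1)$ variable (giving $\sum_k k\,b_{\alpha-1,\varphi}(k)=(\varphi/2)I_\alpha(\varphi)/I_{\alpha-1}(\varphi)$). With the sourceless form you state, the size-biasing would not distinguish up from down and could not return both ratios in \eqref{xxrate}. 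The fix is exactly the paper's Lemma~\ref{condilaw} together with the one-step renewal argument (stay/jump) that propagates the $\alpha$-random-network-with-sources structure along the trajectory. You would also want to record, as the paper does at the start of \S\ref{invertingidsection}, the reduction to the recurrent case via $h$-transform and, for infinite trees, the restriction principle (Proposition~\ref{restrictionprinciple}) to localize to finite subtrees.
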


\subsubsection{Inversion of percolation Ray-Knight identity}\label{inperco}
Assume the symmetry of the conductances and that $0<\alpha<1$. Our goal is to introduce the \textit{percolation-vertex repelling jump process} that
inverts the percolation Ray-Knight identity. 
Given $\lambda\in \mathfrak{R}$ and a configuration $O$ on $E$, the distribution $\p^{\lambda}_{(x_0,O)}$ on $\Omega^\prime$ verifies that the process $\omega=(\omega_t,0\le t<T^\omega)$ (the first coordinate being a jump process on $V$ and the second coordinate a percolation on $V$) starts at $(x_0,O)$, moves such that conditionally on $t<T^\omega$ and $(\omega_s:0\le s\le t)$ with $\omega_t=(x,O_1)$, it jumps from $(x,O_1)$ to $(y,O_2)$ with rate
\begin{small}
\begin{align}\label{cxrate}
\begin{split}
    \left\{\begin{aligned}
    &1_{\{O_1(\{x,y\})=1\}}C_{xy}\sqrt{\dfrac{\Lambda_t(y)}{\Lambda_t(x)}}\cdot\dfrac{I_{1-\alpha}\left(\varphi_t(xy)\right)}{I_{\alpha}\left(\varphi_t(xy)\right)},&\text{ if $y=\mathfrak{p}(x), O_1=O_2$};\\
    &1_{\{O_1(\{x,y\})=1\}}C_{xy}\sqrt{\dfrac{\Lambda_t(y)}{\Lambda_t(x)}}\cdot\dfrac{I_{\alpha}\left(\varphi_t(xy)\right)}{I_{1-\alpha}\left(\varphi_t(xy)\right)},&\text{ if $x=\mathfrak{p}(y),  O_1=O_2$};\\
     &C_{xy}\sqrt{\dfrac{\Lambda_t(y)}{\Lambda_t(x)}}\cdot\dfrac{I_{\alpha-1}\left(\varphi_t(xy)\right)-I_{1-\alpha}\left(\varphi_t(xy)\right)}{I_{\alpha}\left(\varphi_t(xy)\right)},&\text{ if $y=\mathfrak{p}(x), O_2=O_1\setminus\{x,y\}$};\\
 &C_{xz}\sqrt{\dfrac{\Lambda_t(z)}{\Lambda_t(x)}}\cdot\dfrac{I_{-\alpha}\left(\varphi_t(xz)\right)-I_{\alpha}\left(\varphi_t(xz)\right)}{I_{1-\alpha}\left(\varphi_t(xz)\right)},&\text{ if }x=y, O_2=O_1\setminus\{x,z\}\atop \text{ for $z$ with } x=\mathfrak{p}(z),\\
     \end{aligned}\right.
    \end{split}
\end{align}
\end{small}

\noindent and stops at time $T^\omega=T^{\lambda,O}(\omega)$ when the process explodes or uses up the local time at $x_0$. Here for $\omega=(\omega^1,\omega^2)\in \Omega^\prime$,  $\Lambda_t(x,\omega)$, $\varphi_t(xy,\omega)$ are defined as $\Lambda_t(x,\omega^1)$, $\allowbreak\varphi_t(xy,\linebreak[0]\omega^1)$ respectively. 

\begin{theorem}\label{invertfinitecluster}
Suppose $((\phi^{(0)}, \O^{(0)}), \cX_{[0,\tau_u]},(\phi^{(u)}, \O^{(u)}))$ is a percolation Ray-Knight triple associated to $X$.
For any $\lambda\in \mathfrak{R}$ and configuration $O$ on $E$, the conditional distribution of $\left(\cX_{\tau_u-t}\right)_{0\le t\le \tau_u}$  given $\left(\phi^{(u)}, \O^{(u)}\right)=(\lambda,O)$ and $\tau_u<T^X$ is $\p^{\lambda}_{(x_0,O)}$.
\end{theorem}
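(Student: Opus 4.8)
The plan is to reduce the percolation inversion to the non-percolation inversion (Theorem~\ref{invertfinite}) together with an analysis of the additional percolation coordinate, exploiting the tree structure. First I would recall from Theorem~\ref{percolationrayknight} and Definition~\ref{alphapercolation} that the percolation Ray-Knight triple decomposes: the forward process $\cX_{[0,\tau_u]}$ has first coordinate the CTMC $X_{[0,\tau_u]}$ (whose law given $\phi^{(u)}=\lambda$ is already governed by the time-reversal of $\p^\lambda_{x_0}$ by Theorem~\ref{invertfinite}), and second coordinate $\O_{[0,\tau_u]}$ which is built deterministically-in-law from $X$ and $\cL^{(0)}$ by opening an edge $\{x,y\}$ the first time $X$ crosses it, plus extra spontaneous openings at rate~\eqref{o1}. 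The key point is that on a tree, conditioning on $(\phi^{(u)},\O^{(u)})=(\lambda,O)$ factorizes over edges given the trajectory of $X$: whether edge $\{x,y\}$ is finally open depends only on $\lambda$ restricted to $\{x,y\}$ and on whether $X$ ever crossed it (cf.\ the Bessel-function ratios, which are exactly the conditional open-probabilities appearing in~\eqref{zez} with $\ell=\lambda$).

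The main computation is then a Radon--Nikodym / rate-identification argument carried out on the reversed process $(\cX_{\tau_u-t})_{0\le t\le\tau_u}$. I would proceed as in the proof of Theorem~\ref{invertfinite}: write the joint density of $\big((\phi^{(0)},\O^{(0)}),\cX_{[0,\tau_u]}\big)$ against a reference measure, disintegrate on $(\phi^{(u)},\O^{(u)})=(\lambda,O)$, and reverse time. The reversed $X$-coordinate produces the rates $r^\lambda_t$ of~\eqref{xxrate} but with the intensity shifted from $\alpha$ to $\alpha$ in the "child" direction and a Bessel index that now also records whether the crossed edge must stay open; tracking the combinatorial weight of "edge $\{x,y\}$ is open in the final configuration $O$" against "edge $\{x,y\}$ has/has not been crossed by time $t$" converts the ratio $I_{\alpha-1}/I_\alpha$ into the four cases of~\eqref{cxrate}. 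Concretely: the first two lines of~\eqref{cxrate} are the $X$-jump rates restricted to already-open edges, renormalized by the conditional probability that the edge is open given its endpoint local times (this is where $I_{1-\alpha}$ replaces $I_{\alpha-1}$ in the denominator-to-numerator bookkeeping); the third and fourth lines are the rates at which, running backwards, an edge that is open in $O$ becomes "not yet crossed/opened", i.e.\ the time-reversal of the spontaneous-opening rate~\eqref{o1} and of the first-crossing event, and the differences $I_{\alpha-1}-I_{1-\alpha}$ and $I_{-\alpha}-I_\alpha$ are precisely $2K_{1-\alpha}/(\Gamma(\alpha)\Gamma(1-\alpha))$ and $2K_\alpha/(\Gamma(\alpha)\Gamma(1-\alpha))$ up to the Bessel-$K$ identity stated after~\eqref{o1}, matching the forward spontaneous rate.

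I would organize the argument in three steps: (1) establish that $\p^\lambda_{(x_0,O)}$ is well-defined as a process with terminated jump rates, using Appendix~\ref{selfinteracting} and the fact that the $X$-marginal is $\p^\lambda_{x_0}$ (so lifetime finiteness and minimality transfer); (2) prove the identity on finite cylinder events by the density/time-reversal computation above, using the Markov property of $\cX$ on $\T'$ and induction on the number of jumps; (3) upgrade to equality of laws on $\Omega'$ by a monotone-class argument. The main obstacle I anticipate is Step~(2): one must handle the time-reversal of a process whose percolation coordinate is \emph{monotone increasing} forward in time (edges only open), so the reversed process only \emph{closes} edges, and the bookkeeping of "which open edges in $O$ have already been re-closed by reversed time $t$" must be matched exactly against the conditional percolation law of Definition~\ref{alphapercolation} evaluated at the running field $\Lambda_t$ rather than at $\lambda$. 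Getting the Bessel-function algebra to collapse correctly—in particular verifying that the third and fourth cases of~\eqref{cxrate} are genuinely the time-reversed spontaneous-opening rates and that their sum with the first-crossing contribution reproduces~\eqref{o1}—is the delicate part; everything else is a routine adaptation of the proof of Theorem~\ref{invertfinite}.
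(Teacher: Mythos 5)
Your proposal correctly identifies the shape of the argument: time-reverse $\cX$, use the Markov property of $(\cX,\phi)$ to condition only on the current state and running field, reuse the known rates of $\overleftarrow{X}$ from Theorem~\ref{invertfinite}, and feed in the conditional edge-open probabilities (Lemma~\ref{ool}) evaluated at $\Lambda_t$ to get the four cases of~\eqref{cxrate}; your Bessel-$K$ algebra converting the differences $I_{\alpha-1}-I_{1-\alpha}$ and $I_{-\alpha}-I_\alpha$ back to the forward spontaneous-opening rate~\eqref{o1} is exactly right. That part of the plan matches the paper's two-point computation in \S\ref{twopoint}.

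There is, however, a genuine gap in how you pass from the two-point computation to a general tree. You assert that ``on a tree, conditioning on $(\phi^{(u)},\O^{(u)})=(\lambda,O)$ factorizes over edges given the trajectory of $X$'' and treat this as enough. Conditional independence of the edge-states given $L_\cdot(\cL^{(u)})$ (Definition~\ref{alphapercolation}) does hold, but what is actually needed is that the conditional jump/closure \emph{rate} of the reversed process at a vertex $x$ with neighbour $y$, given the full path $\overleftarrow{\cX}_{[0,t]}$, coincides with the rate computed on the two-vertex subtree $\{x,y\}$ alone. This is not automatic, because on the full tree the reversed process can be preempted by jumps to other neighbours, and the running percolation state of other edges is in play. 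The paper handles this in \S4.2.2 with a restriction/coupling argument: it builds the three subtrees $\T_1,\T_2,\T_3$, glues the excursions of $\overleftarrow{X}^{\T_i}$, derives a one-sided inequality $\p(A_j\mid\overleftarrow{\cX}_t,\Lambda_t)\le\p(A'_j\mid\overleftarrow{\cX}^{\T_2}_{Q_2(t)},\Lambda^{\T_2}_{Q_2(t)})$ from the coupling, and then shows the inequality is an equality up to $o(\Delta t)$ by summing over events and comparing to the already-known total jump rate of $\overleftarrow{X}$. Your proposal has no counterpart to this step, and the ``joint density, disintegrate, reverse time'' route you sketch would have to recreate it somewhere, since a rigorous density for the full continuous-time path $\cX_{[0,\tau_u]}$ against a reference measure is itself nontrivial to set up. (Minor additional point: the phrase ``intensity shifted from $\alpha$ to $\alpha$'' in your middle paragraph appears to be a slip; the reversed $X$-coordinate has exactly the law $\p^\lambda_{x_0}$, with no change of index.)
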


\subsection{Mesh limit of vertex repelling jump processes}\label{scalelimit}
We only tackle the case of simple random walks on dyadic grids, which can be easily generalized to general CTMCs on trees. Let $B$ be a reflected Brownian motion on $[0,\infty)$. View $0$ as the `root' and let $(\widetilde{\phi}^{(0)},B_{[0,\tau^B_u]},\widetilde{\phi}^{(u)})$ be a Ray-Knight triple associated to $B$ (defined in a similar way to that for CTMCs). The conditional law of $B_{[0,\tau^B_u]}$ given $\widetilde{\phi}^{(u)}=\lambda$ is the self-repelling diffusion $W^\lambda$, which can be constructed with the burglar process. See \S\ref{convergencesection} for details.

Denote $\N_k:=2^{-k}\N$. Consider $\T_k=(\N_k,E_k)$, where $E_k:=\big\{\{x,y\}:x,y\in \N_k,\ \abs{x-y}=2^{-k}\big\}$, endowed with conductances $C_e^k=2^{k-1}$ on each edge and no killing. The induced CTMC $X^{(k)}$ is the print of $B$ on $\N_k$. Let $\big(\phi^{(0),k},X^{(k)}_{[0,\tau^{X^{(k)}}_u]},\phi^{(u),k}\big)$ be a Ray-Knight triple associated to $X^{(k)}$. It holds that
\[\left(X^{(k)}_{2^kt}, L^{X^{(k)}}(2^kt,x),\phi^{(0),k}(x)\right)_{t\ge 0,x\in \r^+}\overset{d}{\rightarrow}\left(B_t,L^B(t,x),\widetilde{\phi}^{(0)}(x)\right)_{t\ge 0,x\in \r^+},\]
where $L^{X^{(k)}}(2^kt,\cdot)$ and $\phi^{(0),k}(\cdot)$ are considered to be linearly interpolated outside $\N_k$.

In view of this, for any non-negative, continuous function $\lambda$ on $\r^+$ with some additional conditions,
we naturally consider the vertex repelling jump process $X^{\lambda,(k)}$, which has the conditional law of $\left(X^{(k)}_{2^kt}:0\le t\le 2^{-k}\tau^{X^{(k)}}_u\right)$ given $\phi^{(u),k}=\lambda\vert _{\N_k}$. The jump rates of the process are given in \eqref{xrate}.

\begin{theorem}\label{convergence}
For $\lambda\in\widetilde{\mathfrak{R}}$ (defined in \S\ref{convergencesection}), the family of vertex repelling jump processes $X^{\lambda,(k)}$ converges weakly as $k\rightarrow\infty$ to the self-repelling diffusion $W^\lambda$ for the uniform topology, where the processes are assumed to stay at $0$ after the lifetime. 
\end{theorem}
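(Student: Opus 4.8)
\textbf{Proof proposal for Theorem \ref{convergence}.}

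The plan is to establish the weak convergence by a three-stage programme: first identify a tractable description of each $X^{\lambda,(k)}$, then prove tightness, and finally identify the limit as $W^\lambda$. For the first stage I would use the \emph{burglar}/occupation-time representation. The self-repelling diffusion $W^\lambda$ inverting the Ray--Knight identity for reflected Brownian motion admits a construction via a flow (the Jacobi flow, cf.\ \cite{warren98,Aidekon}) driven by a single Brownian motion; correspondingly I expect each discrete process $X^{\lambda,(k)}$, with rates given by \eqref{xrate}, to be encodable through its inverse local time at $0$ and an associated (discrete) flow on the remaining-local-time field $\Lambda^{(k)}_t$. The key structural input is Theorem \ref{invertfinite}: $X^{\lambda,(k)}$ has the conditional law of the time-changed print $\big(X^{(k)}_{2^kt}\big)$ given $\phi^{(u),k}=\lambda|_{\N_k}$. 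So rather than analyzing the self-interacting rates \eqref{xrate} directly, I would work on the \emph{joint} law $\big(X^{(k)}_{2^kt}, L^{X^{(k)}}(2^kt,\cdot),\phi^{(0),k}\big)$, which by the stated invariance principle converges to $\big(B_t, L^B(t,\cdot),\widetilde\phi^{(0)}\big)$, and then condition. Concretely: write $\widetilde\phi^{(u),k}:=\phi^{(0),k}+L^{X^{(k)}}(\tau^{X^{(k)}}_u,\cdot)$; the pair (path, final field) converges jointly to (path of $B$ up to $\tau^B_u$, $\widetilde\phi^{(u)}$), and the conditional law of the path given the final field equals $X^{\lambda,(k)}$ on the discrete side and $W^\lambda$ on the continuous side.

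The second stage is tightness of $\{X^{\lambda,(k)}\}_k$ in $C([0,\infty))$ (or in the Skorokhod space, upgrading to $C$ since jumps are of size $2^{-k}\to 0$). I would obtain this from tightness of the \emph{unconditioned} rescaled prints $X^{(k)}_{2^k\cdot}$ together with an absolute-continuity / local-limit control of the conditioning event $\{\phi^{(u),k}=\lambda|_{\N_k}\}$. Since the conditioning is on a full local-time field, the clean route is to avoid conditioning on a measure-zero event by instead using the explicit Ray--Knight triple coupling: generate $\cL^{(0),k}$ and $X^{(k)}$ independently, read off $\phi^{(u),k}$, and use Proposition \ref{rayknighttransient}/Proposition \ref{PDpar} to match with a loop soup $\cL^{(u),k}$. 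One then shows the law of the discrete path, as a function of the field, is continuous in the field in an appropriate (uniform) sense, using the explicit Bessel-quotient rates in \eqref{xrate} and their limits: by Remark \ref{Xdifferent}, $I_{\alpha-1}(z)/I_\alpha(z)\sim\alpha z^{-1}$ and $I_{-1}(z)/I_0(z)\sim z$ as $z\downarrow 0$, and for $\alpha=1/2$ (simple random walk case) $I_{\pm1/2}$ are elementary, so the discrete rates converge, after the $2^k$ time-rescaling and $2^{-k}$ space-rescaling, to the drift of the Bass--Burdzy/Jacobi-type SDE defining $W^\lambda$. The modulus-of-continuity estimates needed for tightness should follow from comparison of $X^{\lambda,(k)}$ with the reflected random walk (the extra self-repelling drift only pushes toward $0$, which cannot destroy tightness) plus standard local time estimates.

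For the third stage, identification of the limit, I would characterize $W^\lambda$ by a martingale problem: $W^\lambda$ together with its local-time field $\Lambda^{W^\lambda}_t(x)=\lambda(x)-L^{W^\lambda}(t,x)$ solves a well-posed system (this is exactly the content of the burglar construction in \S\ref{convergencesection}). Then for a suitable test function $f$ I would show $f(X^{\lambda,(k)}_t)-\int_0^t (\mathcal A^{(k)}f)(X^{\lambda,(k)}_s)\,ds$ is a martingale, with $\mathcal A^{(k)}$ the generator built from \eqref{xrate}, and that $\mathcal A^{(k)}f\to \mathcal A f$ uniformly on compacts where $\mathcal A$ is the generator of $W^\lambda$; combined with tightness this forces every subsequential limit to solve the $W^\lambda$ martingale problem, and uniqueness (from the flow construction) yields convergence. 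Care is needed near the endpoints: near $x_0=0$ the process is reflected and near the boundary of $\mathrm{supp}(\lambda)$ the rates blow up (the resurrect mechanism when $\alpha=0$; here $\alpha=1/2$ so this is milder), so the test functions and the state space compactification must be chosen so that the ``stay at $0$ after the lifetime'' convention is continuous — I would handle the lifetime $T^\lambda$ by showing $\tau^{X^{(k)}}_u$ suitably rescaled converges to $\tau^B_u=T^\lambda$ and that nothing escapes to infinity, using the hypotheses packaged into $\widetilde{\mathfrak R}$.

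\textbf{Main obstacle.} The crux is the passage from the joint invariance principle to the \emph{conditional} statement: conditioning a converging sequence of joint laws on the value of a function (here the whole local-time field $\phi^{(u),k}$) does not in general commute with weak limits, and the limiting conditioning is on a path in infinite dimensions. Making this rigorous — presumably via the loop-soup coupling of Proposition \ref{PDpar} so that ``conditioning on the field'' becomes ``reading the field off an auxiliary independent randomness'', together with a quantitative continuity (in the field, uniformly) of the inversion dynamics \eqref{xrate} — is where the real work lies; the tightness and generator-convergence steps are comparatively routine given the explicit Bessel-function rates and the known regularity of $W^\lambda$ from \cite{warren98,Aidekon}.
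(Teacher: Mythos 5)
The paper gives no actual proof of Theorem~\ref{convergence}: the text after display~\eqref{xrate} reads ``The proof of Theorem~\ref{convergence} follows the same routine as~\cite{LupuEJP657}. So we omit the details here.'' So your proposal cannot be matched step by step against the source; it has to be compared with the \emph{routine} referenced there. That routine is not a tightness-plus-martingale-problem argument. In~\cite{LupuEJP657} the limiting self-repelling diffusion is a measurable function of a (divergent) Bass--Burdzy flow, the discrete vertex-repelling jump processes admit analogous descriptions through discrete local-time flows, and convergence is obtained from a Donsker-type convergence of the driving inputs together with continuity of the flow-to-process maps. This is a pathwise construction, and it deliberately sidesteps the problem you flag: one never tries to push ``conditioning on the whole field $\phi^{(u),k}$'' through a weak limit, because both discrete and continuous processes are built simultaneously from converging data rather than recovered as conditional laws of limiting joint laws.

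Your proposal therefore takes a genuinely different route, and it has concrete gaps. (i) The conditioning issue you isolate in your final paragraph is the crux, and your proposal only gestures at a fix via Proposition~\ref{PDpar} without making it quantitative; this is precisely the step the flow-based routine is designed to avoid. (ii) Your identification step does not hold up as stated: $W^\lambda$ is \emph{not} Markovian on $\r^+$ --- its drift at time $t$ depends on the entire remaining local-time profile $\Lambda_t(\cdot)$, so the ``generator'' $\mathcal A$ you invoke is a path-dependent, effectively infinite-dimensional object with state $(W^\lambda_t,\Lambda_t(\cdot))$. Well-posedness of that martingale problem is not ``exactly the content of the burglar construction'': the Jacobi/Bass--Burdzy flow construction produces a pathwise solution, which gives existence, not uniqueness in law for a martingale problem, and writing ``uniqueness (from the flow construction) yields convergence'' begs the question. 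Establishing such uniqueness for a singular, infinite-dimensional martingale problem is at least as hard as the original convergence statement. (iii) A smaller but telling slip: you identify ``$\alpha=1/2$'' with ``the simple random walk case''. In \S\ref{scalelimit} the walk $X^{(k)}$ on $\N_k$ is a simple random walk for \emph{every} loop-soup intensity $\alpha\ge 0$; the theorem is asserted for all $\alpha\ge0$, including $\alpha=0$ where the resurrect mechanism and the endpoint of $\mathrm{supp}(\lambda)$ cannot be brushed aside as ``milder''.
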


\section{Inverting Ray-Knight identity}\label{invertingidsection}

In this section, we will obtain the inversion of Ray-Knight identity. The main idea is to first introduce the information on crossings and explore the law of CTMC conditioned on both local times and crossings. Then by `averaging over crossings', we get the representation of the inversion as the vertex repelling jump process shown in Theorem \ref{invertfinite}.

To begin with,  observe that it suffices to only consider  the case when $X$ is recurrent. In fact, for $x\in V$, we set $h(x)=\p^x(H_{x_0}<T^X)$, where $\p^x$ is the law of $X$ starting from $x$. Let $Y$ be the CTMC on $V$ starting from $x_0$ induced by conductances $C^h_{xy}=\frac{h(y)}{h(x)}C_{xy}$ and no killing. Then by the standard argument, we can show that $Y$ is recurrent and $Y_{[0,\tau^Y_u]}$ has the law of $X_{[0,\tau_u]}$ conditioned on $\tau_u<T^X$. Note that $Y$ can also be obtained by removing the killing rate at $x_0$ from the $h$-transform of $X$ (the latter process is killed at $x_0$ with rate $C_{x_0}-C^h_{x_0}$, where $C_x=k_x+\sum_{y:y\sim x}C_{xy}$ and $C^h_x=\sum_{y:y\sim x}C^h_{xy}$). Combine the two facts: (1) the law of the loop soup is invariant under the $h$-transform (Cf. \cite[Proposition 3.2]{chang2016markov}); (2) the law of the Ray-Knight triple does not depend on the killing rate at $x_0$. We have the Ray-Knight triple associated to $X$ has the same law as that associated to $Y$. Then it is easy to deduce Theorem \ref{invertfinite} in the transient case from that in the recurrent case.

Throughout this section, we will assume $X$ is recurrent.

\subsection{The representation of the inversion as a vertex-edge repelling process}\label{representation1}
\begin{definition}\label{rnet}
An element  $n=\left(n(xy)\right)_{xy\in \vec{E}}\in \mathbb{N}^{\vec{E}}$ is called a network (on $\T$). For any network $n$, set
\begin{align*}
\begin{split}
    \widecheck{n}(xy)= n(xy)+(\alpha-1)\cdot 1_{\{y=\mathfrak{p}(x)\}},
    \end{split}
\end{align*}
and for $x\in V$, $n(x):=\sum_{y:y\sim x} n(xy)$ and $\widecheck{n}(x):=\sum_{y:y\sim x} \widecheck{n}(xy)$. If $n(xy)=n(yx)$ for any $\{x,y\}$ in $E$, we say that the network $n$ is sourceless, denoted by $\partial n= \emptyset$. Given $\lambda\in \mathfrak{R}$, we call $\mathcal{N}$ a sourceless $\alpha$-random network  associated to $\lambda$ if $\mathcal{N}$ is a sourceless network, and $\left(\mathcal{N}(xy): x=\mathfrak{p}(y)\right)$ are independent with $\mathcal{N}(xy)$ following the Bessel~$(\alpha-1$, $2C^*_{xy}\sqrt{\lambda(x)\lambda(y)})$ distribution\footnote{For $\nu\ge -1$ and $z>0$, the Bessel$(\nu,z)$ distribution is a distribution on $\N$ given by:
\begin{align}\label{bessel1}
    b_{\nu,z}(n)=I_\nu(z)^{-1}\dfrac{(z/2)^{2n+\nu}}{n!~\Gamma(n+\nu+1)},~n\in \N.
\end{align} 

Bessel$(\nu,0)$ distribution is defined to be the Dirac measure at $0$.}.

More generally, let $\mathfrak{p}(x_0,x)$ be the unique self-avoiding path from $x_0$ to $x$, also seen as a collection of unoriented edges. For $i\in V\backslash\{x_0\}$, we say that $n$ has sources $(x_0,i)$, denoted by $\partial n=(x_0,i)$, if for any $xy\in \vec{E}$ with $x=\mathfrak{p}(y)$,
\[\left\{\begin{array}{ll}
   n(xy)=n(yx)-1,&\text{ if }\{x,y\}\in \mathfrak{p}(x_0,i);\\
   n(xy)=n(yx),&\text{ if }\{x,y\}\notin \mathfrak{p}(x_0,i).
\end{array}\right.\]
Given $\lambda\in \mathfrak{R}$ and $i\in V\backslash\{x_0\}$, we call $\mathcal{N}$ an $\alpha$-random network  with sources $(x_0,i)$ associated to $\lambda$ if $\mathcal{N}$ is a network with sources $(x_0,i)$ and $\left(\mathcal{N}(xy): x=\mathfrak{p}(y)\right)$ are independent with $\mathcal{N}(xy)$ following the Bessel~$(\alpha$, $2C^*_{xy}\sqrt{\lambda(x)\lambda(y)})$ distribution if $x<i$, and the Bessel~$(\alpha-1$, $2C^*_{xy}\sqrt{\lambda(x)\lambda(y)})$ distribution otherwise.
\end{definition} 

\noindent {\it Remark}. We will sometimes use the convention that a network with sources $(x_0,x_0)$ is a sourceless network. 

\

Every loop configuration $\mathscr{L}$ (i.e. a collection of unrooted, oriented loops) induces a network $\theta({\mathscr{L}})$: for $x\sim y$,
\[\theta({\mathscr{L}})(xy):=\#\text{ crossings from $x$ to $y$ by the loops in $\mathscr{L}$}.\]
Due to the tree structure, it holds that $\theta({\mathscr{L}})(xy)=\theta({\mathscr{L}})(yx)$ for any $xy\in \vec{E}$, i.e. $\theta({\mathscr{L}})$ is sourceless. 

Let $\left(\phi^{(0)},X_{[0,\tau_u]},\phi^{(u)}\right)$ be the Ray-Knight triple associated to $X$, where $\phi^{(0)}$ is the local time field of a loop soup $\cL^{(0)}$ independent of $X$. The path $X_{[0,\tau_u]}$ is also viewed as a loop configuration consisting of a single loop. Let $\cN^{(u)}:=\theta({\cL^{(0)}})+\theta({X_{[0,\tau_u]}})$. 
We have the following result, the proof of which is contained in \S\ref{proofofinitialcrossings}.  \cite[Theorem 3.1]{knight1998upcrossing} provides another proof for the case of $\alpha=0$.

\begin{proposition}\label{initialcrossing}
For $\lambda\in \mathfrak{R}$, conditionally on $\phi^{(u)}=\lambda$, $\cN^{(u)}$ is a sourceless $\alpha$-random network  associated to $\lambda$.
\end{proposition}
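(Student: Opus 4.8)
The plan is to compute the joint law of the crossing network $\cN^{(u)} = \theta(\cL^{(0)}) + \theta(X_{[0,\tau_u]})$ together with the occupation field $\phi^{(u)}$, and then extract the conditional law of $\cN^{(u)}$ given $\phi^{(u)}=\lambda$ by dividing by the density of $\phi^{(u)}$ at $\lambda$ (which, recall, has independent coordinates $\phi^{(u)}(x)$ with, up to the tree-conditioning, Gamma-type densities coming from Le Jan's isomorphism / the known loop-soup occupation field law). I would treat the two summands separately. First I would handle $\theta(\cL^{(0)})$ jointly with $\bigl(L_x(\cL^{(0)})\bigr)_{x\in V}$: the loop soup $\cL^{(0)}$ consists of the loops of $\cL$ not visiting $x_0$, and on a tree such a loop lives in exactly one branch hanging off an edge; conditioning on the full occupation field of $\cL^{(0)}$, the number of crossings over an edge $\{x,y\}$ with $x=\mathfrak{p}(y)$ should be, by the standard loop-soup edge-crossing computation (the Poisson structure of loops and the explicit expression for the weight of loops crossing a given edge $k$ times in terms of the local times at its endpoints), exactly Bessel$(\alpha-1, 2C^*_{xy}\sqrt{\ell_x\ell_y})$, independently over edges. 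This is precisely the ``sourceless $\alpha$-random network'' statement but for $\cL^{(0)}$ alone; it is essentially Werner's crossing description of the loop soup (cited in the introduction as \cite{Werner2016,Werner2020}) adapted to intensity $\alpha$ and to trees, and I would either cite it or re-derive it from the product-of-local-times formula for loop weights.

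Next I would handle the single bridge $X_{[0,\tau_u]}$. By Proposition~\ref{PDpar} (or directly, since $X$ starts and ``ends'' its $x_0$-excursion structure at $x_0$ after local time $u$), the path $X_{[0,\tau_u]}$ on a tree decomposes edge by edge: over each edge $\{x,y\}$, $x=\mathfrak{p}(y)$, the numbers of up- and down-crossings are equal (the walk must return to $x_0$-side), so $\theta(X_{[0,\tau_u]})$ is automatically sourceless, and conditionally on the local times $(\phi^{(u)}(z))_z$ the crossing counts over distinct edges are independent. The key input here is the classical description of the number of excursions / crossings of a one-dimensional-like birth-death skeleton of the CTMC given its local time profile — i.e. the Markov-chain analogue of the Ray–Knight upcrossing theorem, for which the case $\alpha=0$ is exactly \cite[Theorem 3.1]{knight1998upcrossing} as the authors note. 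Concretely, given $\phi^{(u)}(x)=\ell_x$ and $\phi^{(u)}(y)=\ell_y$, the number of crossings of $\{x,y\}$ should be distributed so that, when multiplied by the Gamma density contributed by the loop soup $\cL^{(0)}$ on that edge's subtree and summed against the bridge-local-time law, the convolution reproduces the Bessel$(\alpha-1,\cdot)$ marginal. The cleanest route is to use the additivity built into the Ray–Knight identity (Proposition~\ref{rayknighttransient}): $\phi^{(u)} \overset{d}{=} \phi^{(0)} + L(\tau_u,\cdot)$, and on the crossing level $\cN^{(u)} = \theta(\cL^{(0)}) + \theta(X_{[0,\tau_u]})$ with the two pieces independent; so it suffices to know the conditional law of $\theta(\cL^{(0)})$ given $L_\cdot(\cL^{(0)})$ (step one) and of $\theta(X_{[0,\tau_u]})$ given $L(\tau_u,\cdot)$, and then to verify that the resulting mixture, after conditioning the sum $\phi^{(u)}$ to equal $\lambda$, yields independent Bessel$(\alpha-1, 2C^*_{xy}\sqrt{\lambda(x)\lambda(y)})$ marginals. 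Because both the loop-soup edge contribution and the bridge edge contribution factorize over edges and are ``Markovian along the tree'' (each edge's data depends only on the local times at its two endpoints), the whole computation reduces to a single-edge identity.

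That single-edge identity is where I expect the real work to be. It amounts to an identity of the schematic form: if $L_x$ has the appropriate Gamma law, $L(\tau_u,x)$ the appropriate (conditionally independent given neighbouring values) bridge-local-time law, $N_0 \sim$ loop crossings, $N_1 \sim$ bridge crossings, then conditionally on $L_x + L(\tau_u,x) = \ell_x$ for all $x$, $N_0+N_1$ has the Bessel$(\alpha-1)$ law with the stated parameter. I would prove this by an explicit generating-function / Laplace-transform computation in the two local-time variables $\ell_x,\ell_y$ attached to a single edge, using the series definition of $I_\nu$ given in the excerpt and the known moment or Laplace structure of Gamma and Bessel laws (the Bessel$(\nu,z)$ law in \eqref{bessel1} has a clean exponential generating function in $z^2$). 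The hardest part is bookkeeping the ``orientation'' of the tree correctly: the shift by $(\alpha-1)\mathbf 1_{\{y=\mathfrak p(x)\}}$ in $\widecheck n$ and the fact that it is the \emph{child-side} index $\alpha-1$ (versus $\alpha$ along the path to a source, as in Definition~\ref{rnet}) that appears — this is dictated by which endpoint carries the ``extra'' local time from the portion of the loop soup / bridge living strictly below the edge, and getting the index and the argument $2C^*_{xy}\sqrt{\lambda(x)\lambda(y)}$ to match requires care with the recursive peeling of the tree from the leaves toward $x_0$. I would organize the argument as an induction on finite subtrees (truncating $\T$ to a finite connected subtree containing $x_0$, proving the claim there by peeling off a leaf edge and applying the single-edge identity, then passing to the limit using the finiteness of $\mathrm{supp}(\lambda)$ when $\alpha=0$ and a monotone/compatibility argument when $\alpha>0$), so that the only genuinely computational lemma is the one-edge Bessel/Gamma identity.
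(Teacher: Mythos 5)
Your proposal and the paper's proof both reduce to a per-edge computation, but they get there by genuinely different decompositions, and the difference matters for how much work remains. You propose to compute the conditional crossing law of $\cL^{(0)}$ given $L_\cdot(\cL^{(0)})$ and of $X_{[0,\tau_u]}$ given $L(\tau_u,\cdot)$ \emph{separately}, then argue that the two independent Bessel networks, after conditioning the sum of the two local-time fields to equal $\lambda$, mix to a single Bessel$(\alpha-1,\cdot)$ network. That final ``mixture'' step is a nontrivial reproducing property of Bessel laws under Gamma mixing, joint over the whole tree: even at a single edge, the pairs $(\ell^{(0)}_x,\ell^{(0)}_y)$ and $(L(\tau_u,x),L(\tau_u,y))$ are each internally correlated along the tree, so you really do need the recursive leaf-peeling you gesture at, and the one-edge Bessel convolution identity still has to be proved. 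The paper dissolves this difficulty by a different factorization: it first notes via excursion theory and the Poisson structure of the loop soup that the prints of $X_{[0,\tau_u]}$ and of $\cL^{(0)}$ on the distinct branches at $x_0$ are independent, then \emph{re-roots} the tree at an arbitrary vertex to reduce the claim to an edge $\{x_0,x\}$ adjacent to the root --- where \emph{all} crossings come from the $X$-path, since $\cL^{(0)}$ avoids $x_0$. There the joint law of (crossings, local time at $x$) has a transparent Poisson/Exponential/Gamma structure ($\cN^{(u)}(x_0x)\sim\mathrm{Poisson}(uC_{x_0,x})$; each crossing contributes an independent $\mathrm{Exp}(C_{x,x_0})$ to $L(\tau_u,x)$; $L_x(\cL^{(0)})\sim\mathrm{Gamma}(\alpha,C_{x,x_0})$), and the Bessel$(\alpha-1)$ conditional drops out of a one-line Poisson--Gamma computation. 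So your route is not wrong in spirit, but it front-loads the work into a Bessel-convolution identity you have not proved and into a claimed conditional factorization over edges that you only assert heuristically (``Markovian along the tree''), whereas the paper obtains the factorization rigorously from branch-independence plus re-rooting and replaces the convolution by a classical conditional-Poisson argument. If you pursue your route you must actually prove the per-edge Bessel mixing identity and the conditional independence over edges; citing Werner's crossing description for $\cL^{(0)}$ and Knight's theorem for the bridge is a reasonable start, but the glue between them is the hard part.
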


With this proposition, for the recovery of $X_{[0,\tau_u]}$, it suffices to derive the law of $X_{[0,\tau_u]}$ given $\phi^{(u)}$ and $\cN^{(u)}$. Set
\begin{align*}
    \mathfrak{N}:=\begin{cases}
    \big\{n\in \N^{\vec{E}}:\partial n=\emptyset \big\},&\text{if }\alpha>0;\\
    \big\{n\in \N^{\vec{E}}:\partial n=\emptyset,\,\text{supp}(n)\text{ is connected and finite}\big\},&\text{if }\alpha=0.
    \end{cases}
\end{align*}
Given $n\in\mathfrak{N}$, for $\omega\in \Omega$ and $x,y\in V$ with $x\sim y$, set
\begin{align}\label{def:thetat}
\begin{aligned}
    \Theta_t(xy)&=\Theta_t(n,\omega)(xy)=\Theta_t(n,\omega_{[0,t]})(xy)\\
    &:=n(xy)-\#\left\{0<s\le t:\omega_{s-}=x,\omega_s=y\right\},    
\end{aligned}
\end{align}
which represents the remaining crossings while running the process $\omega$ until time $t$ with the initial crossings  $n$. Recall notations introduced at the beginning of \S\ref{s:inversionrk}. Given $\lambda\in \mathfrak{R}$ and $n\in\mathfrak{N}$, the vertex-edge repelling jump process $X^{\lambda,n}$ is defined to be a process starting from $x_0$,  behaves such that
\begin{longlist}
     \item conditionally on $t<T^{X^{\lambda,n}}$ and $\left(X^{\lambda,n}_s:0\le s\le t\right)$ with $X^{\lambda,n}_t=x$, it jumps to a neighbour $y$ of $x$ with rate $r^{\lambda,n}_t\big(x,y,X^{\lambda,n}_{[0,t]}\big)$;
    \item every time $\lim\limits_{s\rightarrow t-}\Lambda_s(X^{\lambda,n}_s)=0$ and $X^{\lambda,n}_{t-}\neq x_0$, it jumps to $\mathfrak{p}(x)$ at time $t$,   
\end{longlist}
and  stops at time $T^{X^{\lambda,n}}$ when the process exhausts the local time at $x_0$ or explodes. Here for $\omega\in \Omega$ with $T^\omega>t$,
\begin{align}\label{yyrate}
\begin{aligned}
r^{\lambda,n}_t(x,y,\omega):=\frac{\widecheck{\Theta}_t(xy)}{\Lambda_t(x)}.
\end{aligned}
\end{align}
In the above expression, $\Theta_t$ is viewed as a network, and $\widecheck{\Theta}_t$ is defined as before.

Intuitively, for this process, both the local time and crossings available are given at the beginning. The process eats the local time during its stay at vertices and consumes crossings at jumps over edges. 

\begin{theorem}\label{rayknightcrossing}
For any $\lambda\in \mathfrak{R}$ and $n\in\mathfrak{N}$,
    $X^{\lambda,n}$ has the law of $X_{[0,\tau_u]}$ conditioned on $\phi^{(u)}=\lambda$ and $\cN^{(u)}=n$.
\end{theorem}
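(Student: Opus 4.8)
The plan is to decompose the CTMC path into its discrete-time skeleton together with the holding times, and to handle these two components by conditioning separately. Given $\phi^{(u)}=\lambda$ and $\cN^{(u)}=n$, the number of crossings over each directed edge is fixed, so the excursion structure of $X_{[0,\tau_u]}$ at each vertex is strongly constrained. First I would reduce to the recurrent case (already justified in the text) and invoke Proposition \ref{chainsjiaqiang} — referenced in the introduction as the ``discrete-time version'' of inverting the Ray-Knight identity — to obtain the law of the discrete-time skeleton $(X_{J_i})_{i\ge 0}$ conditioned on the crossing numbers $n$ alone. That conditional law is described there as a ``reversed oriented edge-reinforced random walk'': from $x$ the walk steps to a neighbour $y$ with probability proportional to the number of still-unused crossings of the directed edge $xy$, i.e. proportional to $\Theta$-type quantities, with the correction $\widecheck{\Theta}$ accounting for the single loop through $x_0$ versus loops not through $x_0$ (this is exactly where the $(\alpha-1)1_{\{y=\mathfrak p(x)\}}$ shift enters).

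Next I would put the holding times back in. Conditioning a CTMC path on its skeleton and on the total local time field $\lambda = \phi^{(u)}$, the sequence of successive holding times at a fixed vertex $x$ is exchangeable and the total time spent at $x$ equals $\lambda(x)$ (on the appropriate event); more precisely, given the skeleton visits $x$ exactly $m$ times before $\tau_u$, the holding times are distributed as a uniform split of the interval of length $\lambda(x)$ into $m$ pieces (a Dirichlet$(1,\dots,1)$ vector scaled by $\lambda(x)$), independently across vertices. The key computation is then to check that a process governed by the jump rate $r^{\lambda,n}_t(x,y,\omega) = \widecheck{\Theta}_t(xy)/\Lambda_t(x)$, together with the resurrect mechanism, reproduces exactly this joint law of (skeleton, holding times). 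The factor $1/\Lambda_t(x)$ in the rate is precisely what turns the elapsed-local-time clock into a uniform/Dirichlet split: if the process sits at $x$ and must jump out when a total amount $\Lambda$ of local time remains, then a rate of the form $c/\Lambda_t(x)$ produces, after the time-substitution $s\mapsto \lambda(x)-\Lambda_s(x)$, an exponential-type clock on the ``remaining fraction'' scale, and summing the competing rates $\sum_y \widecheck{\Theta}_t(xy)$ over neighbours gives the correct total exit rate so that the split of $\lambda(x)$ is uniform. Meanwhile the ratios $\widecheck{\Theta}_t(xy)\big/\sum_{y'}\widecheck{\Theta}_t(xy')$ reproduce the reinforced-walk step probabilities of the skeleton. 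I would make this rigorous by writing down the joint density of (skeleton path, holding-time vector) under $X^{\lambda,n}$ as a product over jumps of rate $\times$ (exponential survival factor), performing the change of variables from holding times to remaining-local-time coordinates, and matching term by term with the product of the edge-reinforced skeleton weights (from Proposition \ref{chainsjiaqiang}) and the Dirichlet densities, against the normalizing constant coming from the Bessel structure of $n$ in Proposition \ref{initialcrossing}.

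The main obstacle, I expect, will be the bookkeeping around the boundary/stopping behaviour and the resurrect mechanism, i.e. showing that the two descriptions stop ``in the same way''. On one side, $X_{[0,\tau_u]}$ stops at the inverse local time $\tau_u$, which in crossing-coordinates means all crossings through $x_0$ are consumed and $\Lambda_{\tau_u}(x_0)=0$; one must check that under the skeleton+holding-time description the local time at vertices other than $x_0$ is automatically compatible with $\lambda$ and, when $\alpha=0$, that the support constraint and connectedness in $\mathfrak N$ guarantee the process genuinely exhausts every $\lambda(x)$ before returning to $x_0$ (this is the role of the resurrect jump to $\mathfrak p(x)$, triggered exactly when $\Lambda_s(\omega_s)\to 0$ at a non-root vertex). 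On the other side, one has to rule out pathologies — explosion in finite remaining local time, or the process getting ``stuck'' with positive $\widecheck\Theta$ but vanishing $\Lambda$ — and confirm that $T^{X^{\lambda,n}}$ as defined coincides a.s. with the image of $\tau_u$. Handling $\widecheck\Theta$ possibly taking the value $\alpha-1 \in (-1,0)$ on the edge to $x_0$ (so that the ``rate'' is not literally a rate near the end of an excursion at the root) is the delicate point; I would treat it via the terminated-jump-rate formalism of Appendix \ref{selfinteracting}, which is designed precisely to accommodate such signed/terminated rates, and verify the consistency conditions there rather than manipulating the rate naively.
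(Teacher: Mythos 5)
Your two-step decomposition (first recover the discrete skeleton via Proposition~\ref{chainsjiaqiang}, then overlay the holding times, and finally match both against the process driven by the rates $\widecheck{\Theta}_t(xy)/\Lambda_t(x)$) is exactly the strategy the paper follows. However, there is a genuine error in the holding-time step that makes the argument fail whenever $\alpha>0$.

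You claim that, conditionally on the skeleton, $\phi^{(u)}=\lambda$ and $\cN^{(u)}=n$, the holding times of $X_{[0,\tau_u]}$ at a vertex $x\neq x_0$ split $\lambda(x)$ uniformly, i.e.\ are a scaled Dirichlet$(1,\dots,1)$ vector. This is false for $\alpha>0$: the aggregated field $\phi^{(u)}(x)$ includes not only the time the Markov chain spends at $x$ but also the local time deposited at $x$ by the loop soup $\cL^{(0)}$ (excursions through $x$ not passing $x_0$, and Gamma$(\alpha, C_x)$-distributed one-point-loop time). Conditioning on $\cN^{(u)}=n$ and the skeleton, the chain visits $x$ only $r(x)$ of the $n(x)$ times that $x$ is entered, and the remainder is carried by $\cL^{(0)}$; the result (the paper's Proposition~\ref{addjumptimes}) is a Dirichlet with parameter $(1,\dots,1,n(x)-r(x)+\alpha)$, not $(1,\dots,1)$. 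Your uniform-split claim only holds at $x_0$ (where $\cL^{(0)}$ contributes nothing) and, after the $r(x)=n(x)$ collapse, in the special case $\alpha=0$. The subsequent ``key computation'' inherits this error: a rate of the form $\widecheck{\Theta}_t(x)/\Lambda_t(x)$, with the changing numerator $\widecheck{\Theta}_t(x)=\widecheck{n}(x)-i+1$ after $i$ exits, yields at the $(i{+}1)$-st stay a remaining-fraction that is Beta$(1,\widecheck{n}(x)-i+1)$, and chaining these gives the correct non-uniform Dirichlet — which is precisely what forces the matching to go through, and is incompatible with the uniformity you assert. Fixing the proposal requires replacing your uniform-split claim with the proper conditional holding-time law and then carrying out the Beta–Dirichlet matching, which is the content of Proposition~\ref{addjumptimes} and the last paragraph of the paper's proof.

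Minor remarks: your concern about $\widecheck{\Theta}$ potentially lying in $(\alpha-1,0)$ on the parent edge is addressed correctly by appealing to the terminated-jump-rate framework of Appendix~\ref{selfinteracting}; and your identification of the resurrect mechanism's role when $\alpha=0$ is accurate. But the holding-time distribution is the load-bearing beam of step (2), and as stated it is wrong for $\alpha>0$.
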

\subsubsection{Proof of Proposition \ref{initialcrossing}}\label{proofofinitialcrossings}

For a subset $V_0$ of $V$, the print of $X$ on $V_0$ is by definition
\begin{align}\label{print}
    \Big(X_{A^{-1}(t)}:0\le t< A\big(T^X\big)\Big),\text{ where } A(u):=\int_0^u 1_{\{X_s\in V_0\}}\d s.
\end{align}
We can also naturally define the print of a (rooted or unrooted) loop. The print of a loop configuration is the collection of the prints of loops in the configuration.
Recall that $\cL^{(u)}$ consists of the partition of the path of $X_{[0,\tau_u]}$ and an independent loop soup $\cL^{(0)}$. By the excursion theory (resp. basic properties of Poisson random measure), the prints of $X_{[0,\tau_u]}$ (resp. $\cL^{(0)}$) on the different branches\footnote{A branch at $x_0$ is defined as a connected component of the tree when removing the vertex $x_0$, to which we add $x_0$} at $x_0$ are independent. In particular, for any $x\sim x_0$, $\cN^{(u)}(x_0x)$ is independent of the prints of $\cL^{(u)}$ on the branches that do not contain $x$. 
By considering other vertices as the root of $\T$, we can readily obtain that given $\phi^{(u)}=\lambda$, $\left(\cN^{(u)}(yz):y=\mathfrak{p}(z)\right)$ are independent and the conditional law of $\cN^{(u)}(yz)$ depends only on $\lambda(y)$ and $\lambda(z)$.

Now it reduces to considering the law of $\cN^{(u)}(yz)$ conditioned on $\phi^{(u)}(y)=\lambda(y)$ and $\phi^{(u)}(z)=\lambda(z)$. We focus on the case of $yz=x_0x$ only, since it is the same for other edges.  It holds that
\begin{longlist}
    \item $\cN^{(u)}(x_0x)$ has a Poisson distribution with parameter $uC_{x_0,x}$;
    \item $\phi^{(u)}(x)$ equals a sum of $\cN^{(u)}(x_0x)$ i.i.d. exponential random variables with parameter $C_{x,x_0}$;
    \item $L_x(\cL^{(0)})$ follows a \allowbreak Gamma($\alpha$, $C_{x,x_0}$) distribution (a Gamma($0$, $\beta$) r.v. is interpreted as a r.v. identically equal to $0$). In fact, $L_x(\cL^{(0)})$ follows a Gamma($\alpha,G^{\bar{x}_0}(x,x)^{-1}$) distribution, where $G^{\bar{x}_0}$ is the Green function of the process $X$ killed at $x_0$. The recurrence of $X$ implies that $G^{\bar{x}_0}(x,x)=C_{x,x_0}^{-1}$.
    \item The above exponential random variables, $\cN^*(x_0x)$ and $L_x(\cL^{(0)})$ are mutually independent.    
\end{longlist}
   
It follows that the conditional law of $\cN^{(u)}(x_0x)$ is the same as the conditional law of $U$ given
\begin{align*}
    R_*+R_1+\cdots+R_U=\lambda(x),
\end{align*}
when $U$ has Poisson($uC_{x_0,x}$) distribution, $R_*$ has Gamma($\alpha$, $C_{x,x_0}$) distribution, $R_1,R_2,\cdots$ have Exponential($C_{x,x_0}$) distribution and $U,R_*,R_1,R_2,\cdots$ are mutually independent. By directly writing the density of $R_*+R_1+\cdots+R_U$ and then conditioning on the sum being $\lambda(x)$, we can readily obtain that the conditional distribution is Bessel~($\alpha-1,2C^*_{x_0x}\sqrt{u\lambda(x)}$) (see also \cite[\S2.7]{feller1957introduction}). We have thus proved the proposition.
\subsubsection{Proof of Theorem \ref{rayknightcrossing}}

The recovery of $X_{[0,\tau_u]}$ given $\phi^{(u)}$ and $\cN^{(u)}$ is carried out by the following two steps: (1)  reconstruct the jump chain of $X_{[0,\tau_u]}$ conditioned on $\phi^{(u)}$ and $\cN^{(u)}$; (2) assign the holding times before every jump conditioned on $\phi^{(u)}$, $\cN^{(u)}$ and the jump chain. We shall prove the process recovered by the above two steps is exactly the vertex-edge repelling jump process.

For step (1), it is easy to see that the conditional law of the jump chain actually depends only on $\cN^{(u)}$.
Moreover, let $\bar{X}$ be distributed as the jump chain of $X$.
For $k\in \z^+$, let $\bar{\tau}_k:=\inf\{l>0:\#\{1\le j\le l:\bar{X}_j=x_0\}=k\}$. Fixing a sourceless network $n$, set $m=n(x_0)$. We consider $\bar{X}$ up to $\bar{\tau}_m$. Let $\bar{\cL}^{(0)}$ have the law of the discrete loop soup induced by $\cL^{(0)}$, and be independent of $\bar{X}$. Set $\bar{\cN}:=\theta({\bar{X}_{[0,\bar{\tau}_m]}})+\theta({\bar{\cL}^{(0)}})$, where $\theta({\bar{X}_{[0,\bar{\tau}_m]}})$ and $\theta({\bar{\cL}^{(0)}})$ have the obvious meaning. Denote by $\bar{X}^n$ the process $\bar{X}$ conditioned on $\bar{\cN}=n$. Then $\bar{X}^n$ has the same law as the jump chain of $X_{[0,\tau_u]}$ conditioned on $\cN^{(u)}=n$. The following proposition plays a central role in the inversion. We present a combinatorial proof later.
\begin{proposition}\label{chainsjiaqiang}
The law of $\bar{X}^n$ can be described as follows. It starts from $x_0$. Conditionally on $\left(\bar{X}^n_k:0\le k\le l\right)$ with $l<T^{\bar{X}^n}$ and $\bar{X}^n_l=x$, 
\begin{itemize}
\item if $\widecheck{\Theta}_l(x)>0$, it jumps to a neighbour $y$ of $x$ with probability 
\[\frac{\widecheck{\Theta}_l(xy)}{\widecheck{\Theta}_l(x)},\]
\item if $\widecheck{\Theta}_l(x)=0$ and $x\neq x_0$, it jumps to the parent of $x$; (This can only happen when $\alpha=0$.)
\end{itemize}
And finally $\bar{X}^n$ stops at time $T^{\bar{X}^n}$. Here
\begin{align*}
    &\Theta_l(xy)=\Theta_l(n,\bar{X}^n_{[0,l]})(xy):=n(xy)-\#\{0\le k\le l-1:\bar{X}^n_k=x,\,\bar{X}^n_{k+1}=y\},\\
    &T^{\bar{X}^n}:=\inf\{k\ge 0:\Theta_k(x_0)=0\}.
\end{align*}
\end{proposition}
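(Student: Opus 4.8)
The plan is to prove that the law of the process $\bar X^n$ of the statement equals the conditional law of $\bar X_{[0,\bar\tau_m]}$ given $\bar\cN=n$ by exhibiting both, as measures on paths, through one explicit function of the crossing network $\theta(\cdot)$ of the path. The observation that makes this combinatorial is that on a tree the law of a path is read off from its crossings: $\p(\bar X_{[0,\bar\tau_m]}=\gamma)=\prod_{xy\in\vec E}(C_{xy}/C_x)^{\theta(\gamma)(xy)}$, a function of $\theta(\gamma)$ alone. Since $\bar X$ and $\bar\cL^{(0)}$ are independent,
\[\p\big(\bar X_{[0,\bar\tau_m]}=\gamma\mid\bar\cN=n\big)\ \propto_\gamma\ \prod_{xy}(C_{xy}/C_x)^{\theta(\gamma)(xy)}\cdot\p\big(\theta(\bar\cL^{(0)})=n-\theta(\gamma)\big),\]
so this conditional law also depends on $\gamma$ only through $\theta(\gamma)$. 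It therefore suffices, for each admissible $n_1\le n$, to compute $\p(\bar X^n=\gamma)$ for one path with $\theta(\gamma)=n_1$ and to match it with the right-hand side, up to a factor independent of $n_1$. Two inputs remain: a closed form for $\p(\bar X^n=\gamma)$, and the law of $\theta(\bar\cL^{(0)})$.

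For $\p(\bar X^n=\gamma)$ I would multiply the one-step transition probabilities along $\gamma$ and telescope. This works because the $k$-th crossing of a fixed directed edge $xy$ along $\gamma$---in whatever order it occurs---happens when $\Theta(xy)=n(xy)-(k-1)$, the $j$-th departure from a fixed vertex $x$ happens when $\Theta(x)=n(x)-(j-1)$, and (for $\alpha=0$ only) the forced move out of a vertex $x\ne x_0$ is the last crossing of $\{x,\mathfrak p(x)\}$ and contributes a factor $1$. Hence the product factorises over directed edges and over vertices; writing $n_2:=n-\theta(\gamma)$ (sourceless, since $n$ and $\theta(\gamma)$ both are), parent-to-child crossings and the root $x_0$ produce ordinary factorials while child-to-parent crossings and the vertices $x\ne x_0$ produce rising factorials $\prod(k+\alpha-1)$---this is where the passage from $\Theta$ to $\widecheck\Theta$ enters---and one obtains, for $\alpha>0$,
\[\p(\bar X^n=\gamma)\ \propto_{n_1}\ \frac{\prod_{x\ne x_0}\Gamma(n_2(x)+\alpha)}{\prod_{\{a,b\}:a=\mathfrak p(b)}n_2(ab)!\,\Gamma(n_2(ab)+\alpha)},\qquad n_1:=\theta(\gamma),\]
the proportionality factor depending only on $n$. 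In particular $\p(\bar X^n=\gamma)$ depends on $\gamma$ only through $\theta(\gamma)$, it is a genuine probability measure because the process terminates almost surely, and for $\alpha=0$ (where the forced moves drain every edge, so $\theta(\gamma)=n$ necessarily) it is simply the uniform measure on closed paths from $x_0$ with crossing network $n$.

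For the law of $\theta(\bar\cL^{(0)})$---the crossing network of the discrete loop soup of intensity $\alpha$ made of loops avoiding $x_0$---I would run the computation of \S\ref{proofofinitialcrossings} for loops only (equivalently, integrate out the occupation field in Proposition~\ref{initialcrossing}, using that, given the field, the loop-soup crossings across an edge $\{a,b\}$ follow a Bessel$(\alpha-1,\cdot)$ law). On a tree the crossings of a single edge are negative-binomially distributed, and the answer is the product formula
\[\p\big(\theta(\bar\cL^{(0)})=n_2\big)\ \propto\ \prod_{xy}(C_{xy}/C_x)^{n_2(xy)}\cdot\frac{\prod_{x\ne x_0}\Gamma(n_2(x)+\alpha)}{\prod_{\{a,b\}:a=\mathfrak p(b)}n_2(ab)!\,\Gamma(n_2(ab)+\alpha)},\]
for sourceless $n_2$, the constant depending only on $\T$ and $\alpha$. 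Combined with $\p(\bar X_{[0,\bar\tau_m]}=\gamma)=\prod_{xy}(C_{xy}/C_x)^{\theta(\gamma)(xy)}$, the right-hand side of the first display becomes, up to an $n_1$-free factor, $\prod_{x\ne x_0}\Gamma(n_2(x)+\alpha)/\prod_{\{a,b\}:a=\mathfrak p(b)}n_2(ab)!\,\Gamma(n_2(ab)+\alpha)$---exactly the closed form for $\p(\bar X^n=\gamma)$. Hence the two measures, being probability measures proportional in $\gamma$, coincide. For $\alpha=0$ the matching is immediate: there is no loop soup, $\bar X^n$ is uniform on closed paths with crossings $n$, and the conditional law is uniform by the reduction above (cf.\ also \cite[Theorem~3.1]{knight1998upcrossing}).

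The hard part, I expect, is establishing the law of $\theta(\bar\cL^{(0)})$ with the right normalisation on a general tree, together with the bookkeeping in the telescoping: verifying that the forced moves and the shift to $\widecheck\Theta$ produce, edge by edge and vertex by vertex, precisely the factorial and $\Gamma(\cdot+\alpha)$ factors needed to cancel against the Bessel (negative-binomial) normalisations. It may help to prove the statement by induction over the tree, together with a companion version for networks with sources $(x_0,i)$, so that an excursion opening a fresh edge is handled by the same argument on a subtree.
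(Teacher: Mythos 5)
Your route is genuinely different from the paper's and, modulo the gaps you flag, sound. You compute global path probabilities and match them via Bayes' rule, exploiting the observation that both $\p(\bar X_{[0,\bar\tau_m]}=\gamma)$ and $\p(\theta(\bar\cL^{(0)})=n-\theta(\gamma))$ depend on $\gamma$ only through $\theta(\gamma)$; the paper instead works one step at a time, computing the conditional transition probability directly. For $\alpha=0$ the paper reaches the same ``uniform on paths with crossings $\Theta_l$'' picture you use, then counts paths via relative orders of exits; for $\alpha>0$ it avoids computing any normalising constant by passing to an extended graph $\T^K$, introducing a concatenation process $\mathbf L$ built from the discrete loop soup, and exchanging two edges in a pairing to show the conditional jump probability from $x$ to $y$ is proportional to $\widecheck\Theta(xy)$ --- precisely the ``one extra loop when returning to the parent'' mechanism. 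Your telescoping for $\p(\bar X^n=\gamma)$ is correct (I checked: writing $n_1=\theta(\gamma)$, $n_2=n-n_1$, it collapses to $\prod_{x\neq x_0}\Gamma(n_2(x)+\alpha)\,\big/\,\prod_{\{a,b\}:a=\mathfrak p(b)}n_2(ab)!\,\Gamma(n_2(ab)+\alpha)$ up to a factor depending only on $n$, with the $\alpha=0$ forced moves contributing factors of $1$ as you say), and your claimed formula for $\p(\theta(\bar\cL^{(0)})=n_2)$ checks out on small trees, e.g. the negative-binomial law on a single edge and $(\alpha^2+\alpha)$ in the two-children-of-$a$, $j=k=1$, case.

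What each approach buys: yours is structurally cleaner and makes the proposition fall out of a single product identity; the paper's avoids ever computing the partition function of the loop-soup crossing field, which your approach cannot dodge. You are right that establishing $\p(\theta(\bar\cL^{(0)})=n_2)$ is the hard part, but you should be aware that it is not merely ``the right normalisation'': it is exactly the $\sum_{\text{pairings}}\alpha^{\#\text{loops}}$ combinatorics on a tree, which is what the paper's extended-graph pairing argument handles. (The factors $\Gamma(n_2(x)+\alpha)/\Gamma(\alpha)$ arise as sums of $\alpha^{\#\text{cycles}}$ over matchings of in- and out-crossings at $x$, and the tree structure is what makes the loop count factorise over vertices.) So the work has been relocated, not removed; in particular, ``integrate out the occupation field in Proposition \ref{initialcrossing}'' requires knowing the joint density of $\phi^{(0)}$ on the whole tree and evaluating a multi-dimensional integral, which is not lighter than the pairing argument. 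Two smaller points you should tie up: (a) you need $\bar X^n$ to terminate a.s.\ to conclude $c=1$ from the proportionality; for $\alpha=0$ and finite support of $n$ this is trivial, but for $\alpha>0$ (or infinite trees) it needs an argument, e.g.\ by the explicit formula or by showing the residual crossings always have sources $(x_0,\bar X^n_l)$ so that $\Theta_l(x\,\mathfrak p(x))\ge 1$, whence the process cannot get stuck off the root; and (b) you should verify that the set of paths attainable by $\bar X^n$ coincides with the set of $\gamma$ with $m$ excursions and $\theta(\gamma)\le n$, $\theta(\gamma)(x_0\cdot)=n(x_0\cdot)$, so the two measures live on the same support.
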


Now turn to step (2), i.e. recovering the jump times. 
\begin{proposition}\label{addjumptimes}
Given $\phi^{(u)}=\lambda$, $\cN^{(u)}=n$ and the jump chain of $X_{[0,\tau_u]}$, denote by $r(x)$ the number of visits to $x$ by the jump chain and  $h_i^x$ the $i$-th holding time at vertex $x$. Note that $r(x)=n(x)$ for each $x$ in the case of $\alpha=0$.
The followings hold:
\begin{longlist}
       \item $\big(\sum_{j=1}^i h^{x_0}_j:1\le i\le n(x_0)\big)$ have the same law as $n(x_0)$ i.i.d. uniform random variables on $[0,u]$ ranked in ascending order and $h^{x_0}_{n(x_0)+1}=u-\sum_{j=1}^{n(x_0)} h^{x_0}_j$;
    \item for any $x(\neq x_0)$ visited by $X_{[0,\tau_u]}$, $\left(\frac{h^x_1}{\lambda(x)},\cdots,\frac{h^x_{r(x)}}{\lambda(x)},1-\frac{\sum_{j=1}^{r(x)}h^x_j}{\lambda(x)}\right)$ is a Dirichlet distribution with parameter $(1,\cdots,1,n(x)-r(x)+\alpha).$
\end{longlist}

Here $m$-variable Dirichlet distribution with parameter $(1,\cdots,1,0)$ is interpreted as $(m-1)$-variable Dirichlet distribution with parameter $(1,\cdots,1)$. 
\end{proposition}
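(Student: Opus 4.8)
The plan is to compute the conditional law of the holding times directly, starting from the well-known description of the CTMC $X$ as a jump chain $\bar{X}$ together with i.i.d. $\mathrm{Exponential}(C_x)$ holding times (recall $C_x=k_x+\sum_{y\sim x}C_{xy}$), and then condition successively on the jump chain, on the crossing numbers $\cN^{(u)}=n$, and on the value $\phi^{(u)}=\lambda$. The first observation is that, given the jump chain, the holding times at distinct vertices are mutually independent, so one may treat each vertex $x$ in isolation; the holding times at $x$ are $r(x)$ i.i.d. $\mathrm{Exponential}(C_x)$ variables $h_1^x,\dots,h_{r(x)}^x$, and all the conditioning we still have to impose on them is encoded through the value of the local time $\phi^{(u)}(x)$ at that vertex (the crossing numbers $n$ are already fixed by the jump-chain conditioning, and determine $r(x)$).

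For a vertex $x\neq x_0$, recall from the proof of Proposition~\ref{initialcrossing} that $\phi^{(u)}(x)=\sum_{j=1}^{r(x)} h_j^x + L_x(\cL^{(0)})$, where $L_x(\cL^{(0)})$ is an independent $\mathrm{Gamma}(\alpha,C_x)$ variable (using recurrence and the $h$-transform reduction from \S\ref{invertingidsection}, so that $X$ is recurrent with no killing and $C_x=\sum_{y\sim x}C_{xy}$; also $n(x)-r(x)$ counts the crossings contributed by the loop soup $\cL^{(0)}$, which by the excursion/Poisson-measure decomposition is exactly the number of excursions from $x$ made by loops in $\cL^{(0)}$, hence $L_x(\cL^{(0)})$ is a sum of $n(x)-r(x)$ i.i.d. $\mathrm{Exponential}(C_x)$ variables plus a $\mathrm{Gamma}(\alpha,C_x)$ piece — i.e.\ a $\mathrm{Gamma}(n(x)-r(x)+\alpha,C_x)$ variable). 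Thus, conditionally on the jump chain and on $n$, the vector $(h_1^x,\dots,h_{r(x)}^x, L_x(\cL^{(0)}))$ consists of independent Gamma variables with shape parameters $(1,\dots,1,n(x)-r(x)+\alpha)$ and common rate $C_x$; conditioning further on their sum being $\lambda(x)$ is the standard fact that a vector of independent Gammas with common rate, conditioned on its sum, is Dirichlet with the corresponding shape parameters and is independent of the sum. Rescaling by $\lambda(x)$ gives exactly assertion (ii); when $\alpha=0$ the last shape parameter is $n(x)-r(x)=0$ (and indeed $r(x)=n(x)$ since $\cL^{(0)}=\emptyset$), which matches the stated degenerate-Dirichlet convention. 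For the root $x_0$, the situation is simpler: $\tau_u$ is the inverse local time, so by definition $\sum_{j=1}^{i}h_j^{x_0}$ is the $i$-th time the clock $L(\cdot,x_0)$ reaches the successive jump epochs, and the partial sums $\big(\sum_{j=1}^i h_j^{x_0}\big)_{1\le i\le n(x_0)}$ are precisely the atoms of a Poisson process on $[0,u]$ of rate $C_{x_0}$ conditioned on having $n(x_0)=\cN^{(u)}(x_0)$ points — here one uses that $\cN^{(u)}(x_0)$ has a Poisson law, as noted in the proof of Proposition~\ref{initialcrossing} — and a rate-$C_{x_0}$ Poisson process on $[0,u]$ conditioned on its number of points has those points distributed as i.i.d.\ uniforms ranked in increasing order, with the final holding time $h^{x_0}_{n(x_0)+1}=u-\sum_{j=1}^{n(x_0)}h^{x_0}_j$ being the residual interval; this is assertion (i).

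The main obstacle is bookkeeping rather than a single hard step: one must justify carefully that conditioning on $\phi^{(u)}=\lambda$ and on $\cN^{(u)}=n$, \emph{after} already conditioning on the jump chain, factorizes over vertices and reduces at vertex $x$ to conditioning only the $r(x)$-tuple of holding times (plus the $\mathrm{Gamma}(\alpha)$ loop-soup contribution) on the single scalar $\lambda(x)$. This requires the independence, across branches at $x_0$ and — by re-rooting, exactly as in the proof of Proposition~\ref{initialcrossing} — across all vertices, of the relevant pieces of $(\cL^{(u)},X_{[0,\tau_u]})$, together with the identification of $L_x(\cL^{(0)})$ as a $\mathrm{Gamma}(n(x)-r(x)+\alpha,C_x)$ variable independent of the $h_j^x$'s. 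Once this decomposition is in place, both (i) and (ii) follow from the elementary Gamma/Dirichlet and Poisson/uniform conditioning facts recalled above, and no further computation is needed.
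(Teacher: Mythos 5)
Your proof is correct and follows essentially the same route as the paper: for $x_0$ you use the Poisson‑process/ordered‑uniforms fact after conditioning on the number of atoms, and for $x\neq x_0$ you identify the remaining local‑time mass from $\cL^{(0)}$ as an independent $\mathrm{Gamma}(n(x)-r(x)+\alpha,C_x)$ variable and invoke the Gamma/Dirichlet conditioning lemma. You spell out the across‑vertex factorization and the excursion‑theoretic identification of $L_x(\cL^{(0)})$ more explicitly than the paper does (which simply asserts these points), but the underlying argument is the same.
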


\begin{proof}
$\Big\{\sum_{j=1}^i h^{x_0}_j:1\le i\le n(x_0)\Big\}$ is distributed as the jump times of a Poisson process on $[0,u]$ conditioned on that there are exactly $n(x_0)$ jumps during this time interval. That deduces (\romannumeral1). 

For $x\neq x_0$, $\left\{h^x_j: 1\le j\le r(x)\right\}$ has the law of $r(x)$ i.i.d exponential variables with parameter $C_x=k_x+\sum_{y:y\sim x}C_{xy}$ conditioned on the sum of them and an independent Gamma($n(x)-r(x)+\alpha$, $C_x$)  random variable being $\lambda(x)$ (recall that a Gamma($0,\beta$) r.v is identically equal to $0$). Here we use the fact that every visit, either by $X_{[0,\tau_u]}$ or $\cL^{(0)}$, is accompanied with an exponential holding time with parameter $C_x$, and the accumulated local time of the one-point loops at $x$ provides a Gamma($\alpha$, $C_x$) time duration.
\end{proof}
Proposition \ref{chainsjiaqiang} and \ref{addjumptimes} give a representation of the inversion of the Ray-Knight identity in terms of its jump chain and holding times. Using this, we can readily calculate the jump rates.

\begin{proof}[Proof of Theorem \ref{rayknightcrossing}]
It suffices to show that the jump chain and holding times of $X^{\lambda,n}$ are given by Proposition \ref{chainsjiaqiang} and \ref{addjumptimes} respectively.
As shown in the proof of Theorem \ref{existunique}, $X^{\lambda,n}$ can be realized by a sequence of i.i.d exponential random variables. It is direct from this realization that the jump chains of $X^{\lambda,n}$ coincides with $\bar{X}^n$ in Proposition \ref{chainsjiaqiang}. To see this, one only needs to note that for any fixed $t\ge 0$ and $\omega\in \Omega$ with $T^\omega>t$ and $\omega_t=x$, the jump rate $r^{\lambda,n}_t(x,y,\omega)$ is proportional to $\widecheck{\Theta}_t(n,\omega)(xy)$ for $y\sim x$. So it remains to check the holding times. We consider $X_{[0,\tau_u]}$ given $\phi^{(u)}=\lambda$ and $\cN^{(u)}=n$. Let $h^x_i$ be the $i$-th holding time at $x$. 
For $1\le i\le n(x)$, set $l^x_i:=\sum_{j=1}^i h^{x}_j$.  By Proposition \ref{addjumptimes}, conditionally on all the holding times before, it holds that $h^{x}_{i+1}/(\lambda(x)-l^x_i)$ follows a Beta$(1,\widecheck{n}(x)-i+1)$ distribution. We can readily check that 
\[(\lambda(x)-l^x_i)\cdot \text{Beta}(1,\widecheck{n}(x)-i+1)\overset{d}{=}\big(u_x^{(i)}\big)^{-1}(\gamma),\] where $u_x^{(i)}(t)=\int_0^t \frac{\widecheck{n}(x)-i+1}{\lambda(x)-l^x_i-s}\d s$ ($0\le t< \lambda(x)-l^x_i$) and $\gamma$ is an exponential random variable with parameter $1$. This is exactly the same holding times as $X^{\lambda,n}$.
We have thus proved the theorem.
\end{proof}

The remaining part is devoted to the proof of Proposition \ref{chainsjiaqiang}.

\textbf{Case of $\alpha=0$.} Condition on $\left(\bar{X}^n_k:0\le k\le l\right)$ with $l<T^{\bar{X}^n}$ and $\bar{X}^n_l=x$. It holds that the remaining path of $\bar{X}$ is completed by uniformly choosing a path with edge crossings $\Theta_l$, since the conditional probability of any such path are the same, which equals
\[\prod_{yz\in \vec{E}}(C_{yz}/C_y)^{\Theta_l(yz)}.\]
So for the probability of the next jump, it suffices to count for any $y\sim x$, the number of all possible paths with edge crossings $\Theta_l$ and the first jump being to $y$. Here we use the same idea as the proof of \cite[Proposition 2.1]{huang2018explicit}. This number equals the number of relative orders of exiting each vertex satisfying (1) the first exit from $x$ is to $y$; (2) the last exit from any $z\neq x_0$ is to $\mathfrak{p}(z)$. In particular, it is proportional to the number of relative orders of exiting $x$ satisfying the above conditions at vertex $x$, which equals
\begin{align*}
    \left\{\begin{array}{ll}
        \frac{\left(\Theta_l(x)-1-1_{\{x\neq x_0\}}\right)!}{\prod_{z:z\sim x}\left(\Theta_l(xz)-1_{\{z=\mathfrak{p}(x)\}}-1_{\{z=y\}}\right)!},&\text{ if }\Theta_l(x)\ge 2;\\
        1_{\{y=\mathfrak{p}(x)\}},&\text{ if }\Theta_l(x)=1,
    \end{array}\right.
\end{align*}
where $(-1)!:=\infty$. It follows that the conditional probability that $\bar{X}^n_{l+1}=y$ is
\begin{align}\label{tranalpha0}
    \begin{cases}
\dfrac{\widecheck{\Theta}_l(xy)}{\widecheck{\Theta}_l(x)},&\text{ if }\Theta_l(x)\ge 2;\\
1_{\{y=\mathfrak{p}(x)\}},&\text{ if }\Theta_l(x)=1.
\end{cases}
\end{align}

\textbf{Case of $\alpha>0$.} \textbf{(1) The conditional transition probability at $x_0$.} We will first deal with the conditional transition probability given $\left(\bar{X}^n_k:0\le k\le l\right)$ with $l<T^{\bar{X}^n}$ and $\bar{X}^n_l=x_0$. We further condition on the remaining crossings of $\bar{X}^n$, i.e. $\theta^\prime:=\theta({\bar{X}^n_{[l,T^{\bar{X}^n}]}})$. Note that $\Theta_l=\theta^\prime+\theta({\bar{\cL}^{(0)}})$. In particular, it holds that $\theta^\prime(x_0y)=\Theta_l(x_0y)$ for any $y\sim x_0$. By \eqref{tranalpha0}, the conditional probability that $\bar{X}^n_{l+1}=y$ is 
\[\dfrac{\theta^\prime(x_0y)}{\theta^\prime(x_0)}=\dfrac{\Theta_l(x_0y)}{\Theta_l(x_0)},\]
which is independent of the further condition and hence gives the conclusion. 

\textbf{(2) The conditional transition probability at $x\neq x_0$.} Recall that the law of the Ray-Knight triple is independent of $k_{x_0}$. In this case, it is easier to consider the process $X$ killed at $x_0$ with rate $1$. With an abuse of notation, we still use $X$ and $\cL$ to denote this process and its associated loop soup respectively\footnote{We mention that such notations are only used in this part.}. The main idea is that the recovery of the Markovian path can be viewed as the recovery of the discrete loop soup instead. We choose to work on the extended graph $\T^K$ to make sure that the probability of a loop configuration
is proportional to $\alpha^{\#\text{loops in the configuration}}$ as $K\rightarrow\infty$. When choosing an outgoing crossing at $x$ right after a crossing to $x$, there is  a unique choice that leads to one more loop in the configuration than others. This unique choice is always to $\mathfrak{p}(x)$. So the conditional transition probability from $x$ to $y$ is proportional to $\widecheck{\Theta}(xy)$. Let us make some preparations first.

\textbf{(a) Concatenation process $\mathbf{L}$.}
First we give a representation of the path $\bar{X}^n$ as a concatenation of loops in a loop soup as follows. Let $\bar{\cL}$ be the discrete loop soup associated to $\cL$.  
We focus on the loops in $\bar{\cL}$ that visits $x_0$. Denote by $\mathfrak{K}$ the number of such loops. For each of them, we uniformly and independently root it at one of its visits at $x_0$. Then we choose uniformly at random (among all $\mathfrak{K}!$ choices) an order for the rooted loops labelled in order by $\{\bar{\gamma}_i:1\le i\le \mathfrak{K}\}$ and concatenate them:
\[\mathbf{L}:=\bar{\gamma}_1\circ \bar{\gamma}_2\circ \cdots\bar{\gamma}_\mathfrak{K}.\]
We call $\bL$ the concatenation process of $\bar{\cL}$. It can be easily deduced from the properties of discrete loop soup that the path between consecutive visits of $x_0$ in any $\bar{\gamma}_i$ has the same law as an excursion of $\bar{X}$ at $x_0$. Thus, conditionally on $\theta(\bL)(x_0)=m$, $\bL$ has the same law as $\bar{X}_{[0,\bar{\tau}_m]}$. Consequently, we have the following corollary.

\begin{corollary}\label{jumpchainln}
Given a network $n\in\mathfrak{N}$, denote by $\bL^n$ the process $\bL$  conditioned on $\theta({\bar{\cL}})=n$. Then $\bL^n$ has the same law as $\bar{X}^n$.
\end{corollary}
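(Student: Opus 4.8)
The plan is to read the corollary off from the excursion description of $\bL$ established just above, together with the restriction property of the loop soup. Write $\bar\cL=\bar\cL_{x_0}\sqcup\bar\cL^{(0)}$, where $\bar\cL_{x_0}$ collects the loops of $\bar\cL$ visiting $x_0$ and $\bar\cL^{(0)}$ those avoiding it; here $\bar\cL^{(0)}$ is precisely the discrete loop soup induced by $\cL^{(0)}$, so it has the same law as the independent copy used in the definition of $\bar\cN$. Since $\bar\cL$ is a Poisson point process of discrete loops, $\bar\cL_{x_0}$ and $\bar\cL^{(0)}$ are independent; moreover the uniform rerootings of the $x_0$-loops and the uniform order chosen in the construction of $\bL$ are fresh randomness, so $\bL$ is a measurable function of $\bar\cL_{x_0}$ and of this auxiliary randomness, and is therefore independent of $\bar\cL^{(0)}$. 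Likewise $\bar X$ is independent of $\bar\cL^{(0)}$ by construction.

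Next I would record two deterministic facts. First, $\theta(\bL)=\theta(\bar\cL_{x_0})$: rerooting a loop at another of its visits to $x_0$ only cyclically rotates its sequence of oriented steps, and concatenating two $x_0$-rooted loops merely juxtaposes their step sequences, so neither operation changes the number of crossings over any directed edge; summing over the loops of $\bar\cL_{x_0}$ gives the identity. Second, the loops of $\bar\cL^{(0)}$ never touch $x_0$, so $\theta(\bar\cL^{(0)})(x_0y)=0$ for all $y\sim x_0$ and hence $\theta(\bar\cL)(x_0)=\theta(\bL)(x_0)$. Combined with the identity $\theta(\bar\cL)=\theta(\bL)+\theta(\bar\cL^{(0)})$, this shows that the event $\{\theta(\bar\cL)=n\}$ is contained in $\{\theta(\bL)(x_0)=m\}$ (recall $m=n(x_0)$) and coincides with $\{\theta(\bL)+\theta(\bar\cL^{(0)})=n\}$.

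Now I would invoke the identity established just above: conditionally on $\{\theta(\bL)(x_0)=m\}$, the path $\bL$ has the law of $\bar X_{[0,\bar\tau_m]}$, which always performs exactly $m$ crossings out of $x_0$. Since $\bL$ and $\bar X$ are each independent of $\bar\cL^{(0)}$, since $\bar\cL^{(0)}$ has the same law in the two settings, and since conditioning on $\{\theta(\bL)(x_0)=m\}$ does not affect $\bar\cL^{(0)}$, this upgrades to an identity of joint laws: conditionally on $\{\theta(\bL)(x_0)=m\}$, the pair $(\bL,\bar\cL^{(0)})$ has the law of $(\bar X_{[0,\bar\tau_m]},\bar\cL^{(0)})$. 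Applying the deterministic map $(\omega,\ell)\mapsto\theta(\omega)+\theta(\ell)$ to both pairs and conditioning on its value being $n$, and using that $\{\theta(\bar\cL)=n\}$ is exactly the subevent $\{\theta(\bL)(x_0)=m,\;\theta(\bL)+\theta(\bar\cL^{(0)})=n\}$ while $\bar\cN=\theta(\bar X_{[0,\bar\tau_m]})+\theta(\bar\cL^{(0)})$, we conclude that $\bL^n=(\bL\mid\theta(\bar\cL)=n)$ has the same law as $(\bar X_{[0,\bar\tau_m]}\mid\bar\cN=n)=\bar X^n$, as claimed.

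The substantive input is the already-cited excursion identity ``$\bL\overset{d}{=}\bar X_{[0,\bar\tau_m]}$ given $\theta(\bL)(x_0)=m$''; granting it, the only point calling for care here is the nested conditioning: that conditioning on the full network $n$ automatically imposes the count $\theta(\bL)(x_0)=m$, which is exactly what $\theta(\bar\cL^{(0)})(x_0)=0$ provides, and that the auxiliary randomness in the construction of $\bL$ is what makes $\bL$ independent of $\bar\cL^{(0)}$ although both are derived from $\bar\cL$. One should also keep in mind that in the killed-at-$x_0$ picture $\bar\tau_m$ may be infinite, but $\{\bar\cN=n\}$ forces $\theta(\bar X_{[0,\bar\tau_m]})$ to be defined and hence $\bar\tau_m<\infty$, so all the laws above are genuine probability measures on finite $x_0$-to-$x_0$ paths.
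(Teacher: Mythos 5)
Your proof is correct and fills in exactly the deduction the paper compresses into the single word ``Consequently'': you use the independence of the $x_0$-loops (hence of $\bL$ and its auxiliary randomness) from the non-$x_0$ loops, the containment $\{\theta(\bar\cL)=n\}\subseteq\{\theta(\bL)(x_0)=m\}$ coming from $\theta(\bar\cL^{(0)})(x_0)=0$, and the upgrade of the marginal excursion identity to a joint law with $\bar\cL^{(0)}$ before conditioning on $\theta(\bL)+\theta(\bar\cL^{(0)})=n$. This is the paper's intended argument.
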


\textbf{(b) Pairing on the extended graph.} To further explore the law of $\bL^n$, we need to introduce the extended graphs of $\T$ and the definition of  `pairing'.
Let $K\in \z^+$. Replace each edge of $\T$ by $K$ copies. The graph thus obtained, denoted by $\T^K=(V,E^K)$, is an extended graph (of $\T$). The collection of all directed edges in $\T^K$ is denoted by $\vec{E}^K$. The graph $\T^K$ is equipped with the killing measure $\delta_{x_0}$, the Dirac measure at $x_0$, and for any $x\sim y$, the conductance on each one of the $K$ directed edges from $x$ to $y$ is $C^K_{xy}:=C_{xy}/K$.  Any element in $\N^{\vec{E}^K}$ is called a network on $\T^K$. We will use $N$ to denote a deterministic network on $\T^K$. For a network $N$ on $\T^K$, the projection of $N$ on $\T$ is a network on $\T$ defined by:
\[
N^\T(xy):=\sum_{\vec{e}\in \vec{E}^K}N(\vec{e})1_{\{\vec{e}\text{ is from $x$ to $y$}\}}.
\]
In the following, we only focus on the network $N\in \{0,1\}^{\vec{E}^K}$. For such a network $N$, denote $[N]:=\{\vec{e}\in \vec{E}^K:N(\vec{e})=1\}$. For simplicity, we omit the superscript `$\rightarrow$' for directed edges throughout this part. 

\begin{definition}
Given a sourceless network $N\in \{0,1\}^{\vec{E}^K}$, a pairing of $N$ is defined to be a bijection from $[N]$ to $[N]$, such that for any $e\in[N]$, the image of $e$ is a directed edge whose head is the tail of $e$. 
\end{definition}

Given $N\in \{0,1\}^{\vec{E}^K}$, a pairing $b$ of $N$ and a subset $[N_0]$ of $[N]$, $b\vert _{[N_0]}=\{b(e): e\in [N_0]\}$ determines a set of loops and bridges. Precisely, for each $e\in [N_0]$, following the pairing on $e$, we arrive at a new (directed) edge $b(e)$. Continuing to keep track of the pairing on the new edge $b(e)$, we arrive at another edge $b(b(e))$. This procedure stops when arriving at either the initial edge $e$ again, or an edge $\in[N]\setminus[N_0]$ because we lose the information about the pairing on $[N]\setminus[N_0]$. In the former case, a loop is obtained. In the latter case, we get a path whose first edge is $e$ and last edge $\in[N]\setminus[N_0]$. Any two such paths are either disjoint, or one is a part of the other. This naturally determines a partial order.
All the maximal elements with respect to this partial order and the loops obtained in the former case form the set of bridges and loops determined by $b\vert _{[N_0]}$. 

Now we start the proof. Let $X^K$ be the CTMC on $\T^K$ starting from $x_0$ induced by the conductances $\left(C^K_{xy}\right)$ and killing measure $\delta_{x_0}$. Consider the discrete loop soup $\bar{\cL}^K$ associated to $X^K$, the projection of which on $\T$ has the same law as $\bar{\cL}$. Let $\bL^K$ be its concatenation process. By Corollary \ref{jumpchainln}, it suffices to consider the law of (the projection of) $\bL^K$ given $\left(\theta(\bar{\cL}^K)\right)^\T=n$.



Observe that conditionally on $\left(\bL^K_k:0\le k\le l\right)$ with $\bL^K_l=x\,(\neq x_0)$, for the next step, it will definitely jump along its present loop in $\bar{\cL}^K$, which is a loop visiting both $x$ and $x_0$.  Therefore, we focus on $\bar{\cL}^{K,x}:=\{\gamma\in \bar{\cL}^K:\gamma\text{ visits }x\}$. In the following, loop configurations always refer to configurations consisting of only loops visiting $x$. Note that for any $n\in \mathfrak{R}$, given $\left(\Theta({\bar{\cL}^K})\right)^\T=n$, the probability that $\bar{\cL}^K$ uses every edge at most once tends to $1$ as $K\rightarrow\infty$. So by the standard arguments (see \cite{Werner2016,Werner2020}), it suffices to show that for any $N\in \{0,1\}^{\vec{E}^K}$ with $N^\T\le n$ and $N^\T(xy)= n(xy)$ for any $y\sim x$, when further conditioned on $\theta({\bar{\cL}}^{K,x})=N$, the transition probability of $\bL^K$ at $x$ is given by the statements in the proposition. It is easily seen that due to the independence of ${\bar{\cL}}^{K,x}$ and ${\bar{\cL}}^{K}\setminus {\bar{\cL}}^{K,x}$, the transition probability at $x$ is independent of the earlier condition $\left(\theta({\bar{\cL}}^{K})\right)^\T=n.$

A key observation is that conditionally on $\theta({\bar{\cL}}^{K,x})=N$, the probability of a loop configuration of $\bar{\cL}^{K,x}$ is proportional to $\alpha^{\#\text{loops in the configuration}}$. In fact, $\bar{\cL}^{K,x}$ is a Poisson random measure on the space of loops visiting $x$. The
intensity of a loop is the product of the conductances of the edges divided by the multiplicity of the loop. Note that for any configuration $\mathscr{L}$ with $\theta({\mathscr{L}})=N$, the multiplicities of the loops in $\mathscr{L}$ are all $1$. Hence, the probability that $\bar{\cL}^{K,x}=\mathscr{L}$ is 
\begin{align*}
    P^x_{\emptyset}\cdot\alpha^{\#\text{loops in $\mathscr{L}$}}\prod_{yz\in \vec{E}}(C_{yz}/KC_y)^{N^\T(yz)},
\end{align*}
where $P^x_{\emptyset}$ is the probability that $\bar{\cL}^{K,x}$ is empty. 

Let $\mathfrak{b}$ be the pairing of $[N]$ induced by $\bar{\cL}^{K,x}$. Namely, $\mathfrak{b}(e)$ is defined to be the edge right after $e$ in the same loop in  $\bar{\cL}^{K,x}$.
We are interested in the conditional law of $\mathfrak{b}\left(\bL^K_{l-1}\bL^K_l\right)$ given the path $\bL^K_{[0,l]}$. 

Let
\begin{align*}
    \left\{\begin{aligned}
    [N_1]&=\Big\{e\in [N]:e\text{ is crossed by } \bL^K_{[0,l-1]}\text{ and the tail of $e$ is }x\Big\};\\
    [N_2]&=\Big\{e\in[N]:\text{the tail of }e\text{ is not }x\Big\}.
    \end{aligned}\right.
\end{align*}

\begin{figure}
    \centering
   \subfigure[the loops and bridges determined by $\mathfrak{b}\vert _{[N_1]\cup[N_2]}$]{\label{figa} \includegraphics[height=3cm,width=7cm]{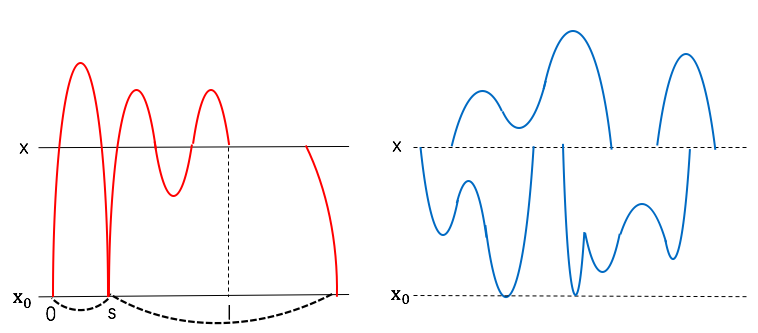}}
   \subfigure[cut off loops at $x$]{\label{figb} \includegraphics[height=3cm,width=7cm]{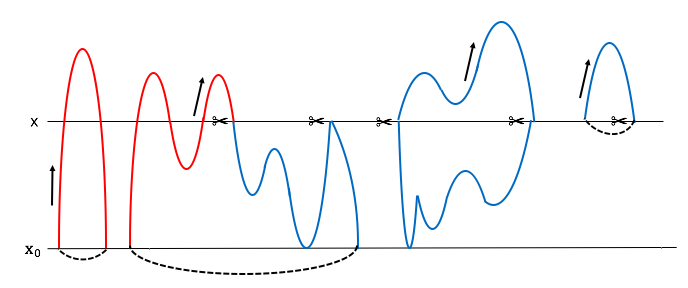}}
   \caption{(a) $\mathfrak{b}\vert _{[N_1]\cup[N_2]}$ determines some crossed loops (the first red loop) visiting $x$, one partly explored bridge (the second red bridge) from $x$ to $x$ and $(N_l^{\T}(x)-1)$ unexplored excursions (blue line on the right) from $x$ to $x$. Our aim is to find the missing piece right after $\bL^K_{[s,l]}$ in the present unfinished loop. The loop is completed either by directly gluing the second red bridge, or adding  some other blue bridges in between. (b) For any loop in $\cL^{K,x}$ that is not or partly explore by $\bL^K_{[0,l]}$, we cut the unexplored part of the loop at every visit at $x$ (including the visit at time $l$). This gives the bridges related to the loop. }
\end{figure}

\noindent Note that the path $\bL^K_{[0,l]}$ already determines $\mathfrak{b}\vert_{[N_1]}$. We further condition on $\mathfrak{b}\vert _{[N_2]}$. It holds that $\mathfrak{b}\vert_{[N_1]\cup[N_2]}$ determines a set of loops and bridges from and to $x$ (see Figure \ref{figa}). The loops are exactly the loops in $\cL^{K,x}$ that have been completely crossed by $\bL^K_{[0,l]}$, and the bridges are partitions of the remaining loops in $\cL^{K,x}$ (see Figure \ref{figb}). Denote $N_l(e)=1_{\left\{e\in [N] \text{ and is not crossed by }\bL^K_{[0,l]}\right\}}$ for $e\in \vec{E}^K$. Then there are exactly $N^\T_l(x)$ bridges, including exactly $N^\T_l(xy)$ bridges whose first edge enters $y$, for any $y\sim x$. Moreover, there is exactly one bridge partly crossed by the path $\bL^K_{[0,l]}$. This bridge is a part of the loop that $\bL^K$ is walking along at time $l$. So it visits $x_0$ by the construction of $\bL^K$, which implies that the first edge of the bridge enters $\mathfrak{p}(x)$ due to the tree structure. 

Now we focus on the conditional law of the pairing of these bridges, i.e. the law of $\mathfrak{b}^*:=\mathfrak{b}\vert _{[N^*]}$, where $[N^*]=[N]\setminus([N_1]\cup[N_2])$ is the collection of the last edges in the above $N^\T_l(x)$ bridges. Note that $\mathfrak{b}^*([N^*])$ consists of the first edges in these bridges. Denote $[N^*]=\{e_1,\cdots,e_r\}$ and $\mathfrak{b}^*([N^*])=\{f_1,\cdots,f_r\}$, where $r=N^\T_l(x)$ and we assign the subscripts such that 
\begin{itemize}
    \item $e_i$ and $f_i$ are in the same bridge ($i=1,\cdots,r$);
    \item $e_1=\bL^K_{l-1}\bL^K_l$. (So the bridge containing $e_1$ and $f_1$ is exactly the unique bridge partly crossed by $\bL^K_{[0,l]}$.)
\end{itemize} 
The totality of bijections from $\{e_1,\cdots,e_r\}$ to $\{f_1,\cdots,f_r\}$ is denoted by $\mathcal{B}$. Every $b\in\mathcal{B}$ pairs the bridges into a loop configuration. We simply call it the configuration completed by $b$. It is easy to see that this defines a one-to-one correspondence between $\mathcal{B}$ and all the possible configurations obtained by pairing these bridges.

Recall that the probability of a loop configuration of $\cL^{K,x}$ is proportional to $\allowbreak\alpha^{\#\text{loops}}$. So the conditional probability that $\mathfrak{b}^*$ equals a fixed $b\in\mathcal{B}$ is proportional to $\alpha^{\#(b)}$, where $\#(b):=\#\text{loops in the configuration completed by }b$. Set $\mathcal{B}_i:=\{b\in\mathcal{B}:b(e_1)=f_i\}$. For $1\le i,j\le r$ with $i\neq j$, a bijection $\Upsilon_{ij}$ from $B_i$ to $B_j$ can be defined as follows. For any $b\in B_i$, $\Upsilon_{ij}(b)$ is defined by exchanging the image of  $e_1$ and $b^{-1}(f_j)$. Precisely, 
\begin{align*}
\left(\Upsilon_{ij}(b)\right)(e):=\left\{
    \begin{array}{ll}
        f_j,&\text{ if }e=e_1;\\
        f_i,&\text{ if }e=b^{-1}(f_j);\\
       b(e),&\text{ otherwise}.
    \end{array}\right.
\end{align*}
We can readily check that when $x\neq x_0$, for $b\in B_i$,
\begin{align*}
    \left\{\begin{array}{ll}
        \#(b)=\#(\Upsilon_{ij}(b)),&\text{ if $i,j\neq 1$;}\\
        \#(b)=\#(\Upsilon_{ij}(b))+1,&\text{ if $i=1$ and $j\neq 1$}.
    \end{array}\right.
\end{align*}
Hence, if we denote by $p_i$ the conditional probability that $\mathfrak{b}^*\in B_i$, then 
$p_i=p_j$ for $i,j\neq 1$, and $ p_1=\alpha p_j$ for $j\neq 1$. Thus,
\begin{align*}
    \left\{\begin{aligned}
    p_1&=\alpha/(r+\alpha-1);\\
    p_i&=1/(r+\alpha-1),\text{ for }i\neq 1.
    \end{aligned}\right.
\end{align*}
It follows that the conditional probability of $\bL^K_{l+1}=y$ is 
\begin{align*}
    \sum_{i=1}^{N_l(x)}1_{\{\text{the tail of }f_i\text{ is }y\}}\cdot p_i=\dfrac{\widecheck{N}^\T_l(xy)}{\widecheck{N}^\T_l(x)}=\dfrac{\widecheck{\Theta}_l(n,\bL^K_{[0,l]})(xy)}{\widecheck{\Theta}_l(n,\bL^K_{[0,l]})(x)},
\end{align*}
where the path $\bL^K_{[0,l]}$ is understood as its projection on $\T$, and the first equality is due to the fact that $f_1$ enters $\mathfrak{p}(x)$. The conditional probability is independent of the further conditioning on $b\vert _{[N_2]}$. That completes the proof.

\subsection{The representation of the inversion as a vertex repelling jump process}\label{invertingsection}
Let $\mathcal{N}^{\lambda}$ be a sourceless $\alpha$-random network associated to $\lambda$ as in Definition \ref{rnet} and $X^\lambda$ be a process distributed, conditionally on $\mathcal{N}^{\lambda}=n$, as $X^{\lambda,n}$. By Proposition \ref{initialcrossing} and Theorem \ref{rayknightcrossing}, $X^\lambda$ has the law of $X_{[0,\tau_u]}$ conditioned on $\phi^{(u)}=\lambda$. The goal of this subsection is to show the following proposition, so as to obtain Theorem \ref{invertfinite}. Recall the distribution $\mathbb{P}^\lambda_{x_0}$ introduced in \S\ref{s:inversionrk}. 
\begin{proposition}\label{samelaw}
For any $\lambda\in\mathfrak{R}$, $X^\lambda$ has the law $\mathbb{P}^\lambda_{x_0}$. 
\end{proposition}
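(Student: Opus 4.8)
The plan is to show that the process $X^\lambda$, obtained by first sampling a sourceless $\alpha$-random network $\mathcal{N}^\lambda$ associated to $\lambda$ and then running $X^{\lambda,\mathcal{N}^\lambda}$, has exactly the jump rates \eqref{xxrate} defining $\mathbb{P}^\lambda_{x_0}$, together with the same resurrect mechanism and the same lifetime $T^\lambda$. The core identity to establish is that ``averaging over the crossings'' turns the Bessel-network jump rate $r^{\lambda,n}_t(x,y,\omega)=\widecheck{\Theta}_t(xy)/\Lambda_t(x)$ into the Bessel-function ratios appearing in $r^\lambda_t(x,y,\omega)$. First I would make precise what ``averaging'' means here: by Corollary-type reasoning from the Markov property of jump processes with terminated rates (Appendix \ref{selfinteracting}), the law of $X^\lambda$ up to time $t$ is a mixture over networks $n$ weighted by $\mathbb{P}(\mathcal{N}^\lambda = n \mid X^{\lambda,n}_{[0,t]} = \omega_{[0,t]})$, and the instantaneous jump rate of the mixture at time $t$, given $\omega_{[0,t]}$ with $\omega_t = x$, is the conditional expectation $\mathbb{E}\big[\widecheck{\Theta}_t(xy)/\Lambda_t(x)\,\big|\,X^{\lambda,\mathcal{N}^\lambda}_{[0,t]} = \omega_{[0,t]}\big] = \Lambda_t(x)^{-1}\,\mathbb{E}\big[\widecheck{\Theta}_t(xy)\,\big|\,\cdots\big]$.

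The key step is then to identify the conditional law of $\Theta_t(xy)$ given the path $\omega_{[0,t]}$. Since $\mathcal N^\lambda$ has independent coordinates across edges (with the sourceless constraint tying $\mathcal N(xy)$ to $\mathcal N(yx)$), and $\Theta_t(xy) = \mathcal{N}(xy) - \#\{s\le t: \omega_{s-}=x,\omega_s=y\}$ is $\mathcal{N}(xy)$ minus a path functional, conditioning on $\omega_{[0,t]}$ amounts to conditioning the Bessel$(\alpha-1, \varphi_0(xy))$ variable $\mathcal{N}(xy)$ on being at least the observed number of $x\!\to\!y$ crossings, say $c$, and symmetrically on $\mathcal N(yx)=\mathcal N(xy)$ being at least the observed number $c'$ of $y\!\to\!x$ crossings --- but actually one must be more careful, because observing the path also reveals, through the already-consumed local time, the value $\Lambda_t$. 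The clean way is to use the explicit realization of $X^{\lambda,n}$ via i.i.d.\ exponentials (as in the proof of Theorem \ref{rayknightcrossing} and Theorem \ref{existunique}): conditionally on $\omega_{[0,t]}$, the ``remaining'' randomness is that of a fresh Bessel network associated to the remaining local time field $\Lambda_t$, i.e.\ $\Theta_t(xy)$ is Bessel$(\alpha-1, \varphi_t(xy))$-distributed along the parent direction. Here one uses the memorylessness/consistency built into the Bessel distribution: a Bessel$(\nu,z)$ variable, conditioned on an event expressible through depleting $z$ to $z'$, becomes Bessel$(\nu,z')$. I would isolate this as a lemma about the Bessel family (it is the discrete analogue of the fact that a Gamma shape is preserved under the size-biasing/splitting used in Proposition \ref{addjumptimes}).

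Granting that $\Theta_t(xy)\sim$ Bessel$(\alpha-1,\varphi_t(xy))$ when $y=\mathfrak p(x)$ (and Bessel$(\alpha,\varphi_t(xy))$ when $x=\mathfrak p(y)$, by the source-shifting discussion around Proposition \ref{initialcrossing} applied with a different root, noting the remaining path from $x$ must still reach $x_0$), the computation is a one-line moment identity:
\begin{align*}
\mathbb{E}\big[\widecheck{\Theta}_t(xy)\big] = \sum_{n\ge 0} (n + \alpha - 1)\, b_{\alpha-1,\varphi_t(xy)}(n) = \varphi_t(xy)\cdot \frac{I_\alpha(\varphi_t(xy))}{2\,I_{\alpha-1}(\varphi_t(xy))}
\end{align*}
for $y=\mathfrak p(x)$, using $b_{\nu,z}(n)=I_\nu(z)^{-1}(z/2)^{2n+\nu}/(n!\,\Gamma(n+\nu+1))$ and the series shift $\sum_n (n+\nu)(z/2)^{2n+\nu}/(n!\Gamma(n+\nu+1)) = (z/2)\sum_n (z/2)^{2n+\nu-1}\cdot 1/(n!\Gamma(n+\nu)) = (z/2) I_{\nu-1}(z)$ --- wait, one must track indices: in fact $\mathbb E[\widecheck\Theta_t(xy)] = (z/2) I_{\nu+1}(z)/I_\nu(z)$ with $\nu = \alpha-1$, giving $\tfrac12\varphi_t(xy) I_\alpha/I_{\alpha-1}$. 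Dividing by $\Lambda_t(x)$ and using $\varphi_t(xy)=2C^*_{xy}\sqrt{\Lambda_t(x)\Lambda_t(y)}$ yields precisely $C^*_{xy}\sqrt{\Lambda_t(y)/\Lambda_t(x)}\cdot I_\alpha(\varphi_t(xy))/I_{\alpha-1}(\varphi_t(xy))$, the child-direction rate in \eqref{xxrate}; the parent-direction case ($x = \mathfrak p(y)$ so $y$'s side carries the extra $\alpha$) is symmetric and gives the $I_{\alpha-1}/I_\alpha$ ratio. Finally I would check the two boundary behaviors: the resurrect mechanism in $\mathbb{P}^\lambda_{x_0}$ matches the ``$\widecheck\Theta_t(x)=0$, jump to $\mathfrak p(x)$'' rule of $X^{\lambda,n}$ (relevant only for $\alpha=0$, where Bessel$(-1,z)$ can assign the forced jump), and that $T^\lambda = T^\lambda_0\wedge T^\lambda_\infty$ coincides with $T^{X^{\lambda,n}}$, since exhausting the local time at $x_0$ is equivalent to $\Theta_t(x_0)=0$ under the sourceless constraint $\sum_y \Theta_t(x_0 y)$ reaching $0$.

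\textbf{Main obstacle.} The delicate point is justifying that, conditionally on the observed path $\omega_{[0,t]}$, the remaining crossing counts $\Theta_t(xy)$ really are distributed as a \emph{fresh} Bessel network associated to $\Lambda_t$ --- rigorously this requires showing the exponential-clock realization of $X^{\lambda,n}$ decouples the ``past'' (path $\omega_{[0,t]}$, consumed local times) from the ``future'' Bessel randomness in the right way, i.e.\ a strong-Markov-type statement for the pair (path, remaining network) that I would phrase and prove using the structure from Appendix \ref{selfinteracting} and the Bessel-preservation lemma. Getting the conditioning exactly right at vertices that have been visited (where one conditions Bessel on $\{\cdot \ge c\}$ \emph{and} on the depleted local time simultaneously, and these are not independent constraints) is where the real work lies; the moment identity and the boundary bookkeeping are routine by comparison.
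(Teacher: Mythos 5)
Your proposal follows essentially the same skeleton as the paper's own proof: reduce to showing that, conditionally on the past path, the remaining crossing network $\Theta_t(\cN^\lambda)$ is a \emph{fresh} $\alpha$-random network associated to the depleted local time field $\Lambda_t$, then compute the jump rate as the conditional expectation of $\widecheck{\Theta}_t(xy)/\Lambda_t(x)$. That is precisely the content of the paper's Lemma~\ref{condilaw} followed by its averaging step, and your moment identity at the end is (up to sign/direction bookkeeping slips) the same computation the paper performs. However, what you flag as the ``main obstacle'' is in fact the entire proof, and you do not close it. Your proposed justification --- that conditioning on $\omega_{[0,t]}$ amounts to conditioning the Bessel variable $\cN(xy)$ on being $\ge c$ plus a ``Bessel-preservation under depletion'' lemma --- is not a correct description of the conditioning. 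The law of the observed path under $X^{\lambda,n}$ depends on the whole network $n$ through the jump rates $r^{\lambda,n}_t(x,y)=\widecheck{\Theta}_t(xy)/\Lambda_t(x)$ and the exponential holding-time factors; so the posterior on $\cN(xy)$ is obtained by weighting the Bessel prior by a path density, not by a simple truncation event, and there is no standalone ``memorylessness of Bessel'' fact that does the job. The paper handles this with an explicit calculation (its display~\eqref{comparing1}): it writes the joint law of $(J_1, X^\lambda_{J_1}, \Theta_{J_1}(\cN^\lambda))$, uses the explicit form of the $\alpha$-random network density $K^{\lambda,i}$, and shows this joint law factorizes as $C(\lambda,s)\,K^{\lambda^s,y}(r)$ with $C$ independent of $r$; the renewal then follows by comparing this with the marginal \eqref{J1monotone}, and the general statement is assembled by iteration (with the strong renewal property) and by the restriction principle that reduces to a finite tree. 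This factorization computation is the substance you would have to supply.

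Two smaller but real issues in the bookkeeping. First, you assign ``$\Theta_t(xy)\sim\text{Bessel}(\alpha-1,\varphi_t(xy))$ when $y=\mathfrak{p}(x)$,'' but Lemma~\ref{condilaw} together with Definition~\ref{rnet} gives the opposite pairing: it is $\Theta_t(xy)\sim\text{Bessel}(\alpha-1,\varphi_t(xy))$ when $x=\mathfrak{p}(y)$, while for $y=\mathfrak{p}(x)$ one has $\Theta_t(xy)-1\sim\text{Bessel}(\alpha,\varphi_t(xy))$ (the extra unit crossing comes from the sources $(x_0,x)$, reflecting that the remaining path must still return to $x_0$). Second, your moment computation contains a self-contradiction (you first derive $(z/2)I_{\nu-1}/I_\nu$, then overwrite it with $(z/2)I_{\nu+1}/I_\nu$); the correct identities are $\sum_k (k+\alpha)\,b_{\alpha,z}(k)=(z/2)I_{\alpha-1}(z)/I_\alpha(z)$ and $\sum_k k\,b_{\alpha-1,z}(k)=(z/2)I_\alpha(z)/I_{\alpha-1}(z)$, matching the two lines of \eqref{xxrate} after dividing by $\Lambda_t(x)$. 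These are fixable slips, but the missing proof of the renewal lemma is the substantive gap.
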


In other words, $X^\lambda$ is a jump process on $V$ with jump rates given by \eqref{xxrate}. Recall the definition of $\Lambda_t$ and $\Theta_t$ in \eqref{ratesymbol} and \eqref{def:thetat} respectively. The key to Proposition \ref{samelaw} is the following lemma, which reveals a renewal property of the remaining crossings of $X^\lambda$, i.e. $\Theta_t({\cN^{\lambda}},X^\lambda_{[0,t]})=\Theta_t(n,X^\lambda_{[0,t]})\big\vert _{n=\cN^{\lambda}}$. The proof is given in \S\ref{proofofcondilaw}.
\begin{lemma}\label{condilaw}
     For any $\lambda\in\mathfrak{R}$,  conditionally on $t<T^{X^\lambda}$ and $\left(X^\lambda_s:0\le s\le t\right)$, the network $\Theta_t(\cN^\lambda,X^\lambda_{[0,t]})$ is an $\alpha$-random network with sources $(x_0,X^\lambda_t)$ associated to $\allowbreak\Lambda_t$.
\end{lemma}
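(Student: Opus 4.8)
The plan is to induct on the jumps of $X^\lambda$, i.e.\ to prove the statement holds on the (random) intervals between consecutive jump times and is preserved across each jump. At time $t=0$ the network $\Theta_0(\cN^\lambda, X^\lambda_{[0,0]})=\cN^\lambda$ is by definition a sourceless $\alpha$-random network associated to $\lambda=\Lambda_0$, which (under the Remark's convention identifying sourceless networks with networks having sources $(x_0,x_0)$, and noting $X^\lambda_0=x_0$) is the desired statement. So suppose that at a jump time $J_k$ we have, conditionally on $X^\lambda_{[0,J_k]}$ with $X^\lambda_{J_k}=x$, that $\Theta_{J_k}:=\Theta_{J_k}(\cN^\lambda,X^\lambda_{[0,J_k]})$ is an $\alpha$-random network with sources $(x_0,x)$ associated to $\Lambda_{J_k}$. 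Two things must be checked: (i) that this property persists — with $x$ and $\Lambda$ updated — as the process sits at $x$ and eats local time until the next jump, and (ii) that the property is correctly transformed when the process jumps from $x$ to a neighbour $y$.

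For (ii), the key point is that conditionally on $X^\lambda_{[0,J_k]}$ and on $\Theta_{J_k}=n$, the jump rate from $x$ to a neighbour $y$ is $r^{\lambda,n}_{J_k}(x,y,\cdot)=\widecheck\Theta_{J_k}(xy)/\Lambda_{J_k}(x)$, hence the next-jump target $y$ is chosen with probability proportional to $\widecheck n(xy)$. I would compute the joint law: the probability that $\Theta_{J_k}=n$ (given $\Lambda_{J_k}$, via the product of Bessel weights prescribed by the sources-$(x_0,x)$ structure) times the probability $\widecheck n(xy)/\widecheck n(x)$ of jumping to $y$. Using the identity $b_{\nu,z}(m)\cdot m = b_{\nu-1,z}(m-1)\cdot (z/2)$ — equivalently $\frac{(z/2)^{2m+\nu}}{m!\,\Gamma(m+\nu+1)}\cdot m = \frac{(z/2)^{2(m-1)+(\nu+1)}}{(m-1)!\,\Gamma(m+\nu+1)}\cdot (z/2)$ — one sees that subtracting one crossing from $n(xy)$ along the edge $\{x,y\}$ (which is exactly the operation $n\mapsto$ the network obtained by decrementing $n(xy)$, changing the sources from $(x_0,x)$ to $(x_0,y)$) turns a Bessel$(\alpha-1,\varphi)$ or Bessel$(\alpha,\varphi)$ weight into the one required by the sources-$(x_0,y)$ prescription, while the normalizing constant $\widecheck n(x)$ in the denominator is cancelled by a telescoping factor coming from summing the Bessel weights on edge $\{x,y\}$. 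The upshot is that after the jump, $\Theta_{J_{k+1}-}=\Theta_{J_k}$ restricted appropriately is an $\alpha$-random network with sources $(x_0,y)$ associated to $\Lambda_{J_k}$, which is the hypothesis at the next jump time with $x$ replaced by $y$ (noting $\Lambda$ has not yet changed at the jump instant; the resurrect jumps, which occur only when $\alpha=0$ and $\Lambda_{J_k}(x)=0$, are handled by the same bookkeeping since then $\widecheck\Theta$ forces the jump to $\mathfrak p(x)$).

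For (i), on the interval $(J_k, J_{k+1})$ the network $\Theta_t$ is constant, equal to $\Theta_{J_k}$, while $\Lambda_t(x)$ decreases linearly and all other $\Lambda_t(z)$ stay fixed; so I must show that ``$\Theta_{J_k}$ is an $\alpha$-random network with sources $(x_0,x)$ associated to $\Lambda_{J_k}$, conditioned to survive (no jump) up to time $t$'' coincides with ``$\Theta_t$ is an $\alpha$-random network with sources $(x_0,x)$ associated to $\Lambda_t$.'' The survival probability on $(J_k,t)$ is the probability that a sum of exponential clocks with total rate $\sum_{y\sim x}\widecheck\Theta_{J_k}(xy)/\Lambda_s(x)$ has not rung, i.e.\ $\exp\!\big(-\int_{J_k}^{t}\widecheck n(x)/\Lambda_s(x)\,\d s\big)$; since $\Lambda_s(x)=\Lambda_{J_k}(x)-(s-J_k)$, this integral is $\widecheck n(x)\log\frac{\Lambda_{J_k}(x)}{\Lambda_t(x)}$, so the survival weight is $\big(\Lambda_t(x)/\Lambda_{J_k}(x)\big)^{\widecheck n(x)}$. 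Multiplying the Bessel weights at the edges incident to $x$ by this factor and using $\varphi_t(xy)=2C^*_{xy}\sqrt{\Lambda_t(x)\Lambda_t(y)}$, one checks via the explicit form \eqref{bessel1} that the dependence of $b_{\nu,\varphi_s(xy)}(n(xy))$ on $\Lambda_s(x)$ is exactly a power $\Lambda_s(x)^{n(xy)}$ (times $\Lambda_s(x)$-independent pieces) up to the normalization $I_\nu(\varphi_s(xy))^{-1}$; the powers combine to $\Lambda_t(x)^{\widecheck n(x)}/\Lambda_{J_k}(x)^{\widecheck n(x)}$, matching the survival weight, and the Bessel-function normalizations reassemble into the correct product for $\Lambda_t$ after one also accounts for the (trivial) Doob-type normalization over all surviving trajectories. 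I expect step (ii), the jump transition and in particular keeping the source/sink bookkeeping and the normalizing constants straight through the change $(x_0,x)\to(x_0,y)$, to be the main obstacle; step (i) is essentially the one-edge Bessel computation already done at the end of the proof of Theorem~\ref{rayknightcrossing}, reused here via the Markov property of $X^\lambda$.
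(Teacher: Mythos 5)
Your plan mirrors the paper's proof exactly: the paper proves precisely your two claims (its (i)/(ii) in \S\ref{proofofcondilaw}) --- the stay-at-$x$ step by matching the survival weight $\bigl(\Lambda_t(x)/\Lambda_{J_k}(x)\bigr)^{\widecheck n(x)}$ against the powers of $\Lambda(x)$ in the incident Bessel weights, and the jump step by an index-shift identity for the Bessel weights together with the source change $(x_0,x)\to(x_0,y)$ --- and then iterates, merely folding both steps into one closed-form computation $\p_x\bigl(J_1\in D,\,X^\lambda_{J_1}=y,\,\Theta_{J_1}(\cN^\lambda)=r\bigr)=\int_D C(\lambda,s)K^{\lambda^s,y}(r)\,\d s$ using the explicit density $K^{\lambda,i}$. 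Two minor repairs to your sketch: the Bessel identity as first stated should relate $b_{\nu,z}(m)\cdot m$ to $b_{\nu+1,z}(m-1)$, not $b_{\nu-1,z}(m-1)$ (your displayed ``equivalent'' unnormalized form is already the correct one and covers the down-jump $\nu=\alpha-1\mapsto\alpha$; the up-jump needs the companion identity $b_{\alpha,z}(m)\cdot(m+\alpha)\propto b_{\alpha-1,z}(m)$), and the resurrect jump at $\alpha=0$ --- which fires deterministically when the local time at some $x\neq x_0$ runs out with $\widecheck\Theta(x)=0$, i.e.\ exactly when \emph{no} exponential clock can ring --- is a genuine degenerate case that the paper verifies separately, not an instance of the rate-driven bookkeeping.
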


\noindent {\it Remark}. In the statement of the proposition and below, a network with sources $(x_0,x_0)$ has to be understood as a sourceless network. 

\begin{proof}[Proof of Proposition \ref{samelaw}]
By Lemma~\ref{condilaw}, for $\lambda\in \mathfrak{R}$, conditionally on $t<T^{X^\lambda}$ and $\left(X^\lambda_s:0\le s\le t\right)$ with $X^\lambda_t =x$,  for any $y\sim x$, 
\begin{itemize}
    \item if $x=\mathfrak{p}(y)$, $\Theta_t(\cN^{\lambda})(xy)$ follows a Bessel~($\alpha-1$, $\varphi_t(xy)$) distribution;
    \item if $y=\mathfrak{p}(x)$, $\Theta_t(\cN^{\lambda})(xy)-1$ follows a Bessel~($\alpha$, $\varphi_t(xy)$) distribution.
\end{itemize}
Note that if further conditioned on $\Theta_t(\cN^{\lambda})$, the process jumps to $y$ at time $t$ with rate \[\dfrac{\widecheck{\Theta}_t(\cN^{\lambda})(xy)}{\Lambda_t(x)}.\] So using Corollary \ref{accuratebound} and averaging over $\Theta_t(\cN^{\lambda})(xy)$, we get the probability of a jump of $X^\lambda$ from $x$ to $y$ during $[t,t+\Delta t]$ is
\begin{small}
\begin{align*}
\begin{cases}
 \Big(I_\alpha\left(\varphi_t(xy)\right)^{-1}\sum_{k\geq 0}\dfrac{k+\alpha}{\Lambda_t(x)}\dfrac{\left(\varphi_t(xy)/2\right)^{2k+\alpha}}{k!\cdot\Gamma(k+\alpha+1)}\Big)\Delta t+o(\Delta t), &\text{ if $y=\mathfrak{p}(x)$};\\
\Big(I_{\alpha-1}\left(\varphi_t(xy)\right)^{-1}\sum_{k\geq 0}\dfrac{k}{\Lambda_t(x)}\dfrac{\left(\varphi_t(xy)/2\right)^{2k+\alpha-1}}{k!\cdot\Gamma(k+\alpha)}\Big)\Delta t+o(\Delta t),&\text{ if $x=\mathfrak{p}(y)$},\\
\end{cases}
\end{align*}
\end{small}which gives the jump rates \eqref{xxrate}.
\end{proof}

\subsubsection{Proof of Lemma \ref{condilaw}}\label{proofofcondilaw}
For any $x\in V$, set
\begin{align}\label{Rxdef}
    \mathfrak{R}^x:=\{\lambda\in \mathfrak{R}:\lambda(x)>0\},\ \mathfrak{N}^x:=\{n\in \mathfrak{N}:\partial n=(x_0,x)\}.
\end{align}
In particular, $\mathfrak{R}=\mathfrak{R}^{x_0}$ and $\mathfrak{N}=\mathfrak{N}^{x_0}$. First let us generalize the notations $X^\lambda$, $X^{\lambda,n}$ and $\cN^\lambda$. Forgetting the original definition before, we construct $\Big\{X^{\lambda,n}_{[0,T^{X^{\lambda,n}}]}:\lambda\in \mathfrak{R},\, n\in \bigcup_{x\in V}\mathfrak{N}^x\Big\}$, $\left\{\cN^\lambda:\lambda\in \mathfrak{R}\right\}$ and $\big\{\p_x:x\in V\big\}$ a family of random processes, random networks and probability measures respectively on the same measurable space, such that for any $x\in V$, $\lambda\in \mathfrak{R}^x$, $n\in\mathfrak{N}^x$, under $\p_x$,
\begin{itemize}
    \item $X^{\lambda,n}$ is a process that starts at $x$, has the jump rates
    $r^{\lambda,n}_t$ (as defined in \eqref{yyrate}) and the same resurrect mechanism as the vertex-edge repelling jump process, and stops at $T^{X^{\lambda,n}}$ the time when the process exhausts the local time at $x_0$ or explodes; 
    \item $\cN^\lambda$ is an $\alpha$-random network with sources $(x_0,x)$ associated to $\lambda$;
    \item $X^{\lambda}$ is a process distributed, conditionally on $\cN^{\lambda}=n$, as $X^{\lambda,n}$.
\end{itemize}
It is easy to see that for $\lambda\in \mathfrak{R}$ and $n\in\mathfrak{N}$, under $\p_{x_0}$, $X^\lambda$, $X^{\lambda,n}$ and $\cN^\lambda$ are consistent with the original definition.

We start the proof with an observation. To emphasize the tree where $X^\lambda$ is defined, let us write $X^\lambda=X^{\lambda,\T}$. Any connected subgraph $\T_0=(V_0,E_0)$ containing $x_0$ is automatically equipped with conductances $(C_e)_{e\in E_0}$ and no killing. 
 The induced CTMC is exactly the print of $X$ on $V_0$ (recall \eqref{print}).
The following restriction principle is a simple consequence of Theorem \ref{rayknightcrossing} and the excursion theory.
\begin{proposition}[Restriction principle]\label{restrictionprinciple}
For any connected subgraph $\T_0=(V_0,E_0)$ containing $x_0$, the print of $X^{\lambda,\T}$ on $V_0$ (similarly defined as \eqref{print}) has the same law as $X^{\lambda\vert _{\T_0},\T_0}$.
\end{proposition}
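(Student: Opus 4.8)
\textbf{Proof proposal for the Restriction Principle (Proposition \ref{restrictionprinciple}).}

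The plan is to reduce the statement about the self-interacting process $X^{\lambda,\T}$ back to the level of the underlying Markovian path $X_{[0,\tau_u]}$, where the restriction (print) operation is transparent, and then push the identity back up through the characterization provided by Theorem \ref{rayknightcrossing}. Concretely, recall that for a fixed network $n$, the process $X^{\lambda,n}$ is \emph{defined} as the conditional law of the jump chain plus holding times of $X_{[0,\tau_u]}$ given $\phi^{(u)}=\lambda$ and $\cN^{(u)}=n$, and that $X^\lambda$ is obtained by integrating over $n$ against the law of the sourceless $\alpha$-random network $\cN^\lambda$ associated to $\lambda$ (Proposition \ref{initialcrossing} and Theorem \ref{rayknightcrossing}). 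So $X^{\lambda,\T}$ \emph{is} precisely the conditional law of $X_{[0,\tau_u]}$ given $\phi^{(u)}=\lambda$. Hence the claim is equivalent to: the print on $V_0$ of $X_{[0,\tau_u]}$, conditioned on $\phi^{(u)}=\lambda$, has the law of $X^{(0)}_{[0,\tau_u^{(0)}]}$ conditioned on its own Ray-Knight time field equalling $\lambda\vert_{V_0}$, where $X^{(0)}$ is the CTMC on $\T_0$ induced by $(C_e)_{e\in E_0}$ and no killing (which by \eqref{print} is the print of $X$ on $V_0$).

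First I would set up the excursion decomposition of $X_{[0,\tau_u]}$ along $\T_0$. Since $\T$ is a tree and $\T_0$ a connected subgraph containing $x_0$, removing $V_0$ disconnects $\T$ into branches, each attached to $\T_0$ at a single boundary vertex; the print of $X$ on $V_0$ is a time change of $X_{[0,\tau_u]}$ obtained by deleting the sojourns in these branches. By the strong Markov property, conditionally on the trajectory of the print, the excursions into a fixed branch hanging off a vertex $z\in V_0$ form (after the usual Poissonization of excursion counts against local time) a Poisson process whose excursions are governed only by the CTMC restricted to that branch, and are independent across distinct branches. Crucially, each such branch excursion contributes \emph{nothing} to the occupation field on $V_0$, so the occupation field $\phi^{(u)}\vert_{V_0}$ depends only on the print together with the one-point-loop / external contributions from $\cL^{(0)}$ restricted to $V_0$. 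Therefore conditioning on $\phi^{(u)}=\lambda$ conditions the print through $\lambda\vert_{V_0}$ only, and leaves the branch excursions to be resampled from their unconditional (branch-local) law given the boundary local times.

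The second step is to identify the conditional law of the print with $X^{\lambda\vert_{\T_0},\T_0}$. For this I would invoke the crossing representation once more, now on $\T_0$: let $\cN^{(u)}_0$ be the network of crossings on $\vec{E_0}$ induced by the print of $X_{[0,\tau_u]}$ together with the print of $\cL^{(0)}$ on $V_0$. Because prints of loops are loops (one-point loops become one-point loops, and loops crossing into branches get their branch parts deleted but their $\T_0$-crossings preserved), the print of $\cL^{(0)}$ on $\T_0$ has the law of the loop soup $\cL^{(0)}_0$ at intensity $\alpha$ associated to $X^{(0)}$ — this is the usual restriction property of loop soups. Consequently $\cN^{(u)}_0 = \theta(\text{print of }\cL^{(0)}) + \theta(\text{print of }X_{[0,\tau_u]})$ is, by Proposition \ref{initialcrossing} applied on $\T_0$, a sourceless $\alpha$-random network associated to $\lambda\vert_{\T_0}$ given $\phi^{(u)}\vert_{V_0}=\lambda\vert_{V_0}$; and given both $\phi^{(u)}\vert_{V_0}=\lambda\vert_{V_0}$ and $\cN^{(u)}_0 = n_0$, the print of $X_{[0,\tau_u]}$ has the law $X^{\lambda\vert_{\T_0}, n_0, \T_0}$ by Theorem \ref{rayknightcrossing} on $\T_0$. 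Integrating over $n_0$ gives exactly $X^{\lambda\vert_{\T_0},\T_0}$, as desired. The main obstacle I anticipate is the bookkeeping in this second step: one must check carefully that the print operation commutes with the loop-soup construction and with the crossing functional $\theta$ — i.e. that $\theta(\text{print}_{\T_0}(\mathscr L)) = \theta(\mathscr L)\vert_{\vec E_0}$ for loop configurations, and that deleting branch sojourns does not alter $\cN^{(u)}$ on $\vec E_0$ — together with verifying that the independence of branch excursions from the print (given boundary local times) is exactly what makes the conditioning factor through $\lambda\vert_{V_0}$ rather than all of $\lambda$. These are the excursion-theoretic and loop-soup-restriction facts alluded to in the statement, and the proof is essentially their careful assembly.
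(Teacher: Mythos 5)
Your proposal is correct and it is precisely the argument the paper has in mind: the paper offers no details, saying only that the restriction principle ``is a simple consequence of Theorem \ref{rayknightcrossing} and the excursion theory,'' and your two steps --- identifying $X^{\lambda,\T}$ with the law of $X_{[0,\tau_u]}$ given $\phi^{(u)}=\lambda$ via Proposition \ref{initialcrossing} and Theorem \ref{rayknightcrossing}, then using excursion theory and loop-soup restriction to show the conditional law of the print factors through $\lambda\vert_{V_0}$ and is governed by the $\T_0$-crossing characterization --- are exactly the careful assembly that sentence is pointing to.
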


Observe that by the restriction principle, it suffices to tackle the case where $\T$ is finite, which we will assume henceforth.

For $\lambda\in \mathfrak{R}^x$, 
consider $X^{\lambda}$ under $\p_x$. In the following, we simply write $\Theta_t(\cN^{\lambda})$ for $\Theta_t(\cN^{\lambda},X^{\lambda}_{[0,t]})$ and denote $J_1=J_1^{X^\lambda}$. For $0<t\le \lambda(x)$, let $\lambda^t(y):=\lambda(y)-t\cdot 1_{\{y=x\}}$ for $y\in V$. We will show that under $\p_x$,
\begin{longlist}
    \item for any $0<t\le \lambda(x)$, conditionally on $X^{\lambda}_s=x$ on $[0,t]$, $\Theta_t(\cN^{\lambda})$ is an $\alpha$-random network with sources $(x_0,x)$ associated to $\lambda^t$;
    \item for any $y\sim x$, conditionally on $J_1\le\lambda(x)$ and $X^{\lambda}_{J_1}=y$,  $\Theta_{J_1}(\cN^{\lambda})$ is an $\alpha$-random network with sources $(x_0,y)$ associated to $\lambda^{J_1}$.
\end{longlist}

Note that once (\romannumeral1) and (\romannumeral2) are proved, we have conditionally on a stay or jump at the beginning of $X^{\lambda}$,
\begin{longlist}
    \item[(a)]
    the remaining crossings $\Theta_t(\cN^{\lambda})$ is distributed as an $\alpha$-random network associated to the remaining local time field;
    \item[(b)] the process in the future is distributed as $X^{\lambda^\prime}$ under $\p_y$, where $\lambda^\prime$ is the remaining local time and $y$ equals $x$ or the vertex it jumps to accordingly. 
    In fact, it is simple to deduce from the strong renewal property of $X^{\lambda,n}$ (see Corollary \ref{selfstrong}) an analogous property for $X^\lambda$ that reads as follows: for any stopping time $S$, conditionally on $S<T^{X^{\lambda}}$ and $\allowbreak\big(X^{\lambda}_t:0\le t\le S\big)$ with $X^{\lambda}_S=y$ and $\Lambda_S(\cdot,X^{\lambda}_{[0,S]})=\lambda^\prime$, the process after $S$ i.e. $\allowbreak\Big(X^{\lambda}_{S+t}:0\le t\le T^{X^{\lambda}}-S\Big)$ has the same law as $X^{\lambda^\prime,\cN^\prime}$ under $\p_y$, where $\cN^\prime$ is a random network following the conditional distribution of $\Theta_S(\cN^\lambda)$. Then the statement follows from (a).
\end{longlist}

Under $\p_{x_0}$, iteratively using (a) and (b), we have after a chain of stays or jumps, $\Theta_t(\cN^{\lambda})$ keeps to be distributed as an $\alpha$-random network associated to the remaining local time, which leads to the conclusion.

We present the proof of (\romannumeral2), and the proof of (\romannumeral1) is similar. First consider the case of $J_1=\lambda(x)$. Notice that this event has a positive probability under $\p_x$ only when $\alpha=0$, $x\neq x_0$, $y=\mathfrak{p}(x)$ and $\cN^{\lambda}(x)=\cN^{\lambda}(xy)=1$. In this case, $\cN^{\lambda}$ is a $0$-random network with sources $(x_0,x)$ associated to $\lambda$ conditioned on $\cN^{\lambda}(x)=1$. Since $\Theta_{J_1}(\cN^{\lambda})(ij)=\cN^{\lambda}(ij)-1_{\{ij=xy\}}$ for $ij\in \vec{E}$, it is easily seen that $\Theta_{J_1}(\cN^{\lambda})$ is a $0$-random network with sources $(x_0,x)$ associated to $\lambda^{J_1}$.

Now we focus on the case where $J_1<\lambda(x)$. Recall the $\alpha$-random network defined in Definition \ref{rnet}.
A simple calculation shows that the law of an $\alpha$-random network with sources $(x_0,i)$ associated to $\lambda$ is given by:
\begin{align*}
\begin{split}
K^{\lambda,i}(n)&=1_{\{\partial n=(x_0,i)\}}\cdot \sigma^i_{\alpha}(\lambda)^{-1}\sqrt{\dfrac{\lambda(x_0)}{\lambda(i)}}\cdot\prod_{x\in V} \lambda(x)^{n(x)+\frac{\alpha-1}{2}\text{deg}(x)}\prod_{xy\in \vec{E}}\frac{\left(C^*_{xy}\right)^{\widecheck{n}(xy)}}{\Gamma(\widecheck{n}(xy)+1)},
\end{split}
\end{align*}
where $\text{deg}(x):=\#\{y\in V:y\sim x\}$ and
\[\sigma^i_\alpha:=\prod_{\{x,y\}\in \mathfrak{p}(x_0,i)}I_\alpha\big(2C^*_{xy}\sqrt{\lambda(x)\lambda(y)}\big)\prod_{\{x,y\}\in  E\setminus\mathfrak{p}(x_0,i)}I_{\alpha-1}\big(2C^*_{xy}\sqrt{\lambda(x)\lambda(y)}\big).\] 

For $r\in \mathfrak{N}^y$, set $r^\prime(ij)=r(ij)+1_{\{ij=xy\}}$ for $ij\in \N^{\vec{E}}$. Then for any Borel subset $D\subset(0,u)$, 
\begin{align}\label{comparing1}
    \begin{aligned}
    &\quad\,\p_x\left(J_1\in D,\,X^{\lambda}_{J_1}=y,\,  \Theta_{J_1}(\cN^{\lambda})=r\right)\\
    &=K^{\lambda,x}(r^\prime)\cdot\p_x\Big(J_1(X^{\lambda,r^\prime})\in D,\,X^{\lambda,r^\prime}_{J_1(X^{\lambda,r^\prime})}=y\Big)\\
    &=K^{\lambda,x}(r^\prime)\int_D\left(\dfrac{\lambda^s(x)}{\lambda(x)}\right)^{\widecheck{r^\prime}(x)}\dfrac{\widecheck{r^\prime}(xy)}{\lambda^s(x)}\d s=\int_D C(\lambda,s) K^{\lambda^s,y}(r)\d s,
    \end{aligned}
\end{align}
where the second equality is due to \eqref{J1pathprob} and in the last expression,
\[C(\lambda,s):=\dfrac{K^{\lambda,x}(r^\prime)}{K^{\lambda^s,y}(r)}\left(\dfrac{\lambda^s(x)}{\lambda(x)}\right)^{\widecheck{r^\prime}(x)}\dfrac{\widecheck{r^\prime}(xy)}{\lambda^s(x)}=\dfrac{\sigma^y_\alpha(\lambda^s)}{\sigma^x_\alpha(\lambda)}\sqrt{\dfrac{\lambda(x_0)\lambda(y)}{\lambda^s(x_0)\lambda(x)}}\left(\dfrac{\lambda(x)}{\lambda^s(x)}\right)^{(1-\alpha)1_{\{x\neq x_0\}}},\]
that is independent of $r$. Summing over $\allowbreak r\in \mathfrak{N}^x$ in \eqref{comparing1}, we get
\begin{align}\label{J1monotone}
    \p_x\left(J_1\in D,\,X^{\lambda}_{J_1}=y\right)=\int_D C(\lambda,s)\d s.
\end{align}
By the monotone class methods, we can replace `$1_D$' in \eqref{J1monotone} by any non-negative measurable function on $\r^+$ vanishing on $[u,\infty)$. In particular, 
\begin{align}\label{comparing2}
\begin{aligned}
    &\e_x\left(K^{\lambda^{J_1},y}(r);J_1\in D,\,X_{J_1}^\lambda=y\right)=\int_D C(\lambda,s) K^{\lambda^s,y}(r)\d s.
\end{aligned}
\end{align}
Comparing \eqref{comparing1} and \eqref{comparing2}, we have
\begin{align*}
   \p_x\left(\Theta_{J_1}(\cN^{\lambda})=r,\,J_1\in D,\,X^{\lambda}_{J_1}=y\right)=\e_x\left(K^{\lambda^{J_1},y}(r);\,J_1\in D,\,X^{\lambda}_{J_1}=y\right). 
\end{align*}
We have thus reached (\romannumeral2).

\section{Inverting percolation Ray-Knight identity}\label{invertpercolation}

In this section, we only consider the case where the conductances are symmetric and $\alpha\in (0,1)$. In \S\ref{percolation}, we will introduce the metric-graph Brownian motion. It turns out that the metric-graph Brownian motion together with the associated percolation process gives a realization of $\cX$ defined in \S\ref{percoray}, which leads to the percolation Ray-Knight identity (Theorem \ref{percolationrayknight}). \S\ref{proofinvertrk} is devoted to the proof of the inversion of percolation Ray-Knight identity (Theorem \ref{invertfinitecluster}).

\subsection{Percolation Ray-Knight identity}\label{percolation}
Replacing every edge $e=\{x,y\}$ of $\T$ by an interval $I_e$ with length $1/2C_{xy}$, it defines the metric graph associated to $\T$, denoted by $\tG$. $V$ is considered to be a subset of $\tG$. One can naturally construct a metric-graph Brownian motion $B$ on  $\tG$, i.e. a diffusion that behaves like a Brownian motion inside each edge, performs Brownian excursion inside each adjacent edges when hitting a vertex in $V$ and is killed at each vertex $x\in V$ with rate $k_x$. Let $\wL$ be the loop soup associated to $B$. Then $X$ and $B$, resp. $\cL$ and $\wL$, can be naturally coupled through restriction (i.e. $X$, resp. $\cL$, is the print of $B$, resp. $\wL$, on $V$), which we will assume from now on. See \cite{lupu2016loop} for more details of metric graphs, metric-graph Brownian motions and the above couplings.
Notations such as $L_\cdot(\wL)$ and $\wL^{(u)}$ $(u\ge 0)$ are similarly defined for $\wL$ as for $\cL$.  Assume that $B$ starts from $x_0$. We also have the Ray-Knight identity for $B$, that is to replace $\cL, X, \tau_u$ in Theorem \ref{rayknighttransient} by $\wL, B, \tau^B_u$ respectively.

Let $B$ and $\wL^{(0)}$ be independent, and we always consider under the condition $\tau_u<T^X$ (or equivalently $\tau^B_u<T^B$). For $x\in \tG$ and $t\le \tau_u$, we set 
\[
\widetilde{\phi}_t(x):=L_x(\wL^{(0)})+L^{B}(Q^{-1}(t),x),\]
where $Q(t)=\sum_{x\in V}L^{B}(t,x)$ and $Q^{-1}$ is the right-continuous inverse.
By the coupling of $B$ and $X$, it holds that
$\widetilde{\phi}_t\vert _V=\phi_t$ (defined in \eqref{lambdatdef}).
Next, for $t\geq 0$, $\widetilde{\O}_t$ will denote a percolation on $E$ defined as: for $e\in E$,
\[
\widetilde{\O}_t(e)=1_{\left\{\widetilde{\phi}_t\text{ has no zero on }I_{e}\right\}}.\]

Recall the notations defined in \S\ref{percoray}. We will show the following proposition, which immediately implies the percolation Ray-Knight identity. 
\begin{proposition}\phantomsection\label{cxcx}
\begin{longlist}
     \item $(\widetilde{\phi}_{\tau_u},\widetilde{\O}_{\tau_u})$ has the same law as $(L_\cdot(\cL^{(u)}),\O^{(u)})$;
    \item $\widetilde{\cX}:=(X_t,\widetilde{\O}_t)_{0\le t\le \tau_u}$ has the same law as $\cX=(X_t,\O_t)_{0\le t\le \tau_u}$.
\end{longlist}
\end{proposition}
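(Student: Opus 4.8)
The plan is to move to the metric graph $\tG$ and read everything off from the Ray--Knight identity for the metric-graph Brownian motion $B$, combined with the Markov-field property of the metric-graph loop-soup occupation field. Throughout I work under $\tau_u<T^X$ and keep the restriction coupling, so that $X$ is the print of $B$, $\cL$ the print of $\wL$, and $\widetilde{\phi}_t|_V=\phi_t$.

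For the first assertion I would first observe that $Q^{-1}(\tau_u)=\tau^B_u$ (the local time at $x_0$ is unaffected by the time change), so that $\widetilde{\phi}_{\tau_u}=L_\cdot(\wL^{(0)})+L^B(\tau^B_u,\cdot)$; by the Ray--Knight identity for $B$ this field, on all of $\tG$, has the law of $L_\cdot(\wL^{(u)})$, hence $(\widetilde{\phi}_{\tau_u},\widetilde{\O}_{\tau_u})$ has the law of $L_\cdot(\wL^{(u)})$ together with its zero-set percolation. By the coupling, restricting the first coordinate to $V$ yields $L_\cdot(\cL^{(u)})$, so it remains to identify the conditional law of the zero-set percolation given $L_\cdot(\wL^{(u)})|_V=\ell$. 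Here I would invoke the metric-graph loop-soup structure (\cite{lupu2016loop}): conditionally on its trace on $V$, the restrictions of $L_\cdot(\wL^{(u)})$ to distinct edges are independent, and on the edge $\{x,y\}$ it is a squared Bessel bridge of dimension $2\alpha$ from $\ell_x$ to $\ell_y$ over an interval of length $1/(2C_{xy})$. Independence across edges is exactly item (\romannumeral1) of Definition \ref{alphapercolation}, and item (\romannumeral2) reduces to the probability that such a bridge has no zero in the interior. Writing $q_t$ and $q^0_t$ for the transition densities of $\mathrm{BESQ}^{2\alpha}$ and of $\mathrm{BESQ}^{2\alpha}$ killed at $0$, that probability equals $q^0_{1/(2C_{xy})}(\ell_x,\ell_y)/q_{1/(2C_{xy})}(\ell_x,\ell_y)$; since $q_t(x,y)$ and $q^0_t(x,y)$ share the same prefactor and carry respectively $I_{\alpha-1}(\sqrt{xy}/t)$ and $I_{1-\alpha}(\sqrt{xy}/t)$ (by the standard correspondence between $\mathrm{BESQ}^{2\alpha}$ killed at $0$ and $\mathrm{BESQ}^{4-2\alpha}$), this ratio is $I_{1-\alpha}(2C_{xy}\sqrt{\ell_x\ell_y})/I_{\alpha-1}(2C_{xy}\sqrt{\ell_x\ell_y})$, which is \eqref{zez}.

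For the second assertion, both $\widetilde{\cX}$ and $\cX$ have as first coordinate the print of $B$ (a copy of $X$ under $\tau_u<T^X$), and $\widetilde{\O}_0$, conditionally on $L_\cdot(\wL^{(0)})|_V$, is distributed as $\O^{(0)}$ by the first assertion at $u=0$ (both being independent of the path), so it suffices to match the evolution of $\widetilde{\O}_t$. As occupation times only grow, $\widetilde{\O}_t(e)$ is nondecreasing and can change only while $B$ is inside $I_e$, i.e.\ only when $X_t$ is an endpoint of $e=\{x,y\}$ or at the instant $X$ crosses $e$. At a crossing $x\to y$, $B$ traverses the whole interval $I_e$ and makes $\widetilde{\phi}$ positive on $I_e$, so $\widetilde{\O}_t(e)$ is forced to $1$, which is the first bullet in the definition of $\cX$. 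When $X_t=x$ with $e$ still closed, the rightmost zero of $\widetilde{\phi}_t$ on $I_e$ lies at a point $\sigma\in(0,1/(2C_{xy}))$ which stays frozen until an excursion of $B$ from $x$ into $I_e$ first reaches depth $\ge\sigma$; such an excursion either reaches $y$ --- which is the crossing already counted at rate $C_{xy}$ --- or returns to $x$ beforehand, and by It\^o excursion theory, with the normalization of the local time at $x$ determined by requiring that $B$ traverse $I_e$ at rate $C_{xy}$ (equivalently $|I_e|=1/(2C_{xy})$), the latter happens at instantaneous rate $\tfrac1{2\sigma}-C_{xy}$. To reduce this to a rate depending only on $\phi_t(x)$ and $\phi_t(y)$, I would use the structural fact, preserved in time by the Ray--Knight identity for $B$, that conditionally on $L_\cdot(\wL^{(0)})|_V$ and the history of $(X,\widetilde{\O})$, if $e$ is closed then $\widetilde{\phi}_t|_{I_e}$ is a $\mathrm{BESQ}^{2\alpha}$ bridge from $\phi_t(x)$ to $\phi_t(y)$ conditioned to hit $0$, independent of the other edges; the conditional law of $\sigma$ is then the law of the last zero of such a bridge, and averaging $\tfrac1{2\sigma}-C_{xy}$ against it, via the last-exit decomposition in terms of $q_t$ and $q^0_t$, produces $C_{xy}\sqrt{\phi_t(y)/\phi_t(x)}\,K_\alpha\big(C_{xy}\sqrt{\phi_t(x)\phi_t(y)}\big)/K_{1-\alpha}\big(C_{xy}\sqrt{\phi_t(x)\phi_t(y)}\big)$, i.e.\ \eqref{o1}, with $K_\nu=\tfrac12\Gamma(\nu)\Gamma(1-\nu)(I_{-\nu}-I_\nu)$ appearing precisely as the gap between the two Bessel functions created by the conditioning to hit $0$. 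Since $\widetilde{\O}$ has no other transitions, $(X_t,\widetilde{\O}_t)$ has the same initial law and the same conditional jump rates as $\cX$, and therefore the same law.

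The main obstacle is the last step above: first, establishing that the ``independent conditioned squared Bessel bridges over the edges'' description of $\widetilde{\phi}_t$ is genuinely preserved under the motion of $B$ --- this is the technical heart and requires combining the excursion theory of $B$ at the vertices and the additivity of squared Bessel processes with the loop-soup Markov property --- and second, performing the integral of the excursion rate $\tfrac1{2\sigma}-C_{xy}$ against the law of the last zero of the conditioned bridge and recognising it as the displayed ratio of $K$-Bessel functions. The (simpler) zero-avoidance computation in the first assertion is the prototype for this last calculation.
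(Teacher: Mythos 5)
Your overall strategy is correct and you have pinpointed where the real technical weight lies, but the route genuinely differs from the paper's, and the two items you flag as ``obstacles'' are genuine gaps that the paper fills by explicit computation.

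For part (1) the paper does not invoke the conditional $\mathrm{BESQ}^{2\alpha}$-bridge description of the loop-soup field on an edge. Instead it first conditions on the discrete crossing event $O(\cL)(\{x,y\})=0$ (Lemma~\ref{olo}), decomposes the metric-graph field on $I_e$ into a $\mathrm{BESQ}^{2\alpha,h}_{0\to 0}$ piece plus two $\mathrm{BESQ}^{0,h}$ pieces coming from excursions at the endpoints, and computes the zero-hitting probability by integrating the first-zero density of one piece against the last-zero density of another, producing $\mathbb{K}_{1-\alpha}$; it then multiplies by the Bessel probability of $O(\cL)=0$ from Proposition~\ref{initialcrossing} to get Lemma~\ref{ool}, from which~(1) is immediate. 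Your shortcut through the zero-avoidance probability $q^0_h/q_h=I_{1-\alpha}/I_{\alpha-1}$ is clean and arrives at the same formula, but it rests on the stronger (and unreferenced) structural statement that the metric-graph occupation field, conditionally on its vertex values, restricts to independent $\mathrm{BESQ}^{2\alpha}$ bridges on the edges; the paper's three-process decomposition is precisely the piece of that structure it actually needs, proved in situ.

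For part (2) the divergence is sharper. The paper does not go through excursion rates at all: exploiting the Markov property of $(\widetilde{\cX},\phi)$, it writes the conditional probability of opening as a ratio of $\mathbb{K}_{1-\alpha}$'s at the incremented and unincremented local times, and differentiates, $-f'/f$ with $f=\mathbb{K}_{1-\alpha}$; Lemma~\ref{olo} already delivers $f$, so no new Bessel integral is required. Your picture --- the edge opens at rate $\tfrac1{2\sigma}-C_{xy}$ where $\sigma$ is the frozen last zero, and one averages over the conditional last-zero law of the bridge conditioned to hit $0$ --- is a nice probabilistic reading of $-f'/f$, but your two stated obstacles are exactly the content the paper supplies: (i) that the conditioned-bridge description is preserved along the motion of $B$ conditionally on the history is not free (the paper avoids needing this in full strength by conditioning only on $(\widetilde{\cX}_t,\phi_t)$ and invoking the Ray--Knight identity for the print of $B$ on the relevant branch), and (ii) the last-zero integral you defer is essentially the same Bessel integral the paper already carries out inside Lemma~\ref{olo}. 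So the excursion route repackages, rather than saves, the key computation; without performing it, the proof is incomplete.
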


The remaining part is devoted to the proof of Proposition \ref{cxcx}.
First we present two lemmas in terms of the loop soups $\cL$ and $\wL$, which will later be translated into the analogous versions associated to $\widetilde{\cX}$ using the Ray-Knight identity.

Define
$O(\cL),O(\wL)$ on $E$ as follows: for $e\in E$,
\[O(\wL)(e):=1_{\{L_\cdot(\wL)\text{ has no zero on } I_{e}\}},~O(\cL)(e):=1_{\{e\text{ is crossed by }\cL\}}.\]
Remember that $\cL$ and $\wL$ are naturally coupled. So it holds that $O(\wL)\ge O(\cL)$. Moreover, they have the following relations. 
\begin{lemma}\label{olo}
Conditionally on $\cL$, $\left(O(\wL)(e):e\in E,\, O(\cL)(e)=0\right)$ is a family of  independent random variables, and
\begin{align}\label{eqolo}
\begin{split}
&\p\left(O(\wL)(\{x,y\})=0\,\big\vert\, L_\cdot(\cL)=\ell,O(\cL)(\{x,y\})=0\right)=\bK_{1-\alpha}(2C_{xy}\sqrt{\ell_x\ell_y}),
\end{split}
\end{align}
where $\bK_\nu(z):=\frac{2\left(z/2\right)^{\nu}}{\Gamma(\nu)}K_{\nu}\left(z\right)$.
\end{lemma}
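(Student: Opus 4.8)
\textbf{Proof proposal for Lemma \ref{olo}.}

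The plan is to reduce the statement to a computation about a single Brownian bridge/excursion on one edge interval, using the restriction coupling and the independence structure of the loop soup. First I would fix an edge $e=\{x,y\}$ and condition on the print loop soup $\cL$ on $V$, equivalently on $L_\cdot(\cL)$ together with the crossing data $\theta(\cL)$. Since $\wL$ is obtained from $\cL$ by, on each edge $e$, filling in the `bridges' of $\wL$ that live inside $I_e$ and do not reach the endpoints (plus, on edges with $\theta(\cL)(xy)\ge 1$, the excursion pieces that connect consecutive crossings), and since these edge-interior pieces are conditionally independent across distinct edges $e$ given the data on $V$, the family $\left(O(\wL)(e): O(\cL)(e)=0\right)$ is conditionally independent. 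This is the `basic properties of Poisson random measure / excursion theory' step and is essentially bookkeeping once the metric-graph coupling of \cite{lupu2016loop} is in hand.

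The core is then \eqref{eqolo}: on an edge $e=\{x,y\}$ with $O(\cL)(e)=0$ (no crossing), the only contribution of $\wL$ inside $I_e$ comes from loops of $\wL$ that touch $I_e$ but stay on one side, i.e. excursions into the interior of $I_e$ from the endpoint $x$ (respectively $y$) that return to $x$ (resp. $y$) without reaching the other endpoint. Conditionally on $L_x(\cL)=\ell_x$ and $L_y(\cL)=\ell_y$, the occupation field $L_\cdot(\wL)$ restricted to $I_e$ is, by the second Ray--Knight theorem for the interval (or directly from Le Jan--type computations for $\alpha$-loop soups on an interval, cf. the Gamma/Bessel identities already used in \S\ref{proofofinitialcrossings}), described by two independent squared-Bessel-type processes of dimension $2\alpha$ started from $\ell_x$ at $x$ and from $\ell_y$ at $y$, run toward each other along the interval of length $1/(2C_{xy})$; the event $\{O(\wL)(e)=0\}$ is exactly the event that these two processes, meeting in the middle, produce a zero of the glued field. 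The probability that a squared Bessel bridge type object avoids $0$, or rather the probability that the field does hit $0$, is a classical Bessel-function expression; carrying out this one-dimensional computation with the normalization $I_e$ of length $1/(2C_{xy})$ yields exactly $\bK_{1-\alpha}(2C_{xy}\sqrt{\ell_x\ell_y})$ with $\bK_\nu(z)=\tfrac{2(z/2)^\nu}{\Gamma(\nu)}K_\nu(z)$. I would double-check the index: with loop-soup intensity $\alpha$ the relevant Bessel index on the interval is $\alpha-1$ on interior crossings and the `no zero' probability naturally involves $K_{1-\alpha}$, matching the formula in Definition \ref{alphapercolation} for \eqref{zez}, since $1-\bK_{1-\alpha}$ should be consistent with the ratio $I_{1-\alpha}/I_{\alpha-1}$ appearing there once one also conditions appropriately — this internal consistency check is a good sanity test.

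I expect the main obstacle to be the explicit one-edge computation of $\p(L_\cdot(\wL)$ has a zero on $I_e \mid \ell_x,\ell_y, \text{no crossing})$: one must set up the correct squared-Bessel description of the metric-graph loop-soup occupation field on an interval with prescribed endpoint values and no through-crossing, and then evaluate a hitting probability that comes out as a ratio of modified Bessel functions. The cleanest route is probably to avoid diffusions entirely and instead argue at the discrete level: approximate $I_e$ by a path of $K$ edges, use the Bessel$(\alpha-1,\cdot)$ network computations of Proposition \ref{initialcrossing} / \S\ref{proofofinitialcrossings} on this path conditioned on zero net crossing between the two ends, compute the probability that the discrete occupation field vanishes somewhere along the path, and pass to the limit $K\to\infty$; the Bessel-function identity $I_\nu(z)=\sum (z/2)^{2n+\nu}/(n!\,\Gamma(n+\nu+1))$ and the contiguity relations for $K_\nu$ should make the limit transparent. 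Alternatively one can quote the corresponding statement from \cite{lupu2016loop} or \cite{lupu2019inverting} for the $\alpha=1/2$ case and check that the derivation there extends verbatim to general $\alpha\in(0,1)$, the only change being the dimension parameter of the squared Bessel processes. The conditional independence across edges and the measurability of $\widetilde{\O}_t$ are then routine.
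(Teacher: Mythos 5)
Your overall plan matches the paper's: reduce to one edge interval $I_e$ with prescribed endpoint local times and no through-crossing, describe the occupation field on $I_e$ via squared Bessel processes, and compute the probability that this field has a zero. But the decomposition you state is not quite right, and this is the crux of the computation, so it matters. You say the field on $I_e$ is ``two independent squared-Bessel-type processes of dimension $2\alpha$ started from $\ell_x$ at $x$ and from $\ell_y$ at $y$, run toward each other.'' Two $2\alpha$-dimensional bridges would double-count the contribution of loops entirely contained in $I_e$, which is exactly where the dimension $2\alpha$ comes from. The correct decomposition (the paper's, following \cite[\S 3]{lupu2016loop}) is a sum of \emph{three} independent pieces over $[0,h]$ with $h=1/(2C_{xy})$: a $\text{BESQ}^{2\alpha,h}_{0\to 0}$ bridge accounting for loops of $\wL$ fully inside $I_e$, a $\text{BESQ}^{0,h}_{\ell_x\to 0}$ bridge accounting for excursions from $x$ into $I_e$, and a time-reversed $\text{BESQ}^{0,h}_{\ell_y\to 0}$ bridge accounting for excursions from $y$. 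Equivalently you could fold two of these together to get one $2\alpha$-dimensional and one $0$-dimensional bridge pointed at each other, but never two of dimension $2\alpha$.

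Once the decomposition is right, the hitting probability is not ``classical'' off the shelf; the paper works it out by noting that the sum has a zero iff the first zero of the $\text{BESQ}^{0,h}_{\ell_x\to 0}$ piece occurs before the last zero of the remaining sum (which is a $\text{BESQ}^{2\alpha,h}_{0\to\ell_y}$), writing explicit densities for these two random times, and evaluating the resulting double integral via a known integral representation of $K_\nu$. Your two suggested detours (discretize and take $K\to\infty$, or extrapolate the $\alpha=1/2$ argument of \cite{lupu2016loop}) are not obviously wrong, but they replace a two-page explicit computation with a different and at least as long argument, and the discretization route in particular would need care about the ``no zero on the continuous interval'' event being the limit of a discrete analogue. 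I would also flag your consistency check: $1-\bK_{1-\alpha}$ is \emph{not} equal to $I_{1-\alpha}/I_{\alpha-1}$; the latter emerges only after multiplying $\bK_{1-\alpha}$ by the conditional probability $\p(O(\cL)(e)=0\mid L_\cdot(\cL))$, which is exactly the content of Lemma~\ref{ool}'s proof. So the sanity check you propose, as stated, would appear to fail and could mislead you. The independence claim you argue for by conditional independence of edge-interior pieces given the vertex data is essentially the paper's argument and is fine.
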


 \begin{lemma}\label{ool}
Conditionally on $L_\cdot(\cL)$, $\left(O(\wL)(e):e\in E\right)$ is a family of  independent random variables, and
\begin{align}\label{ze}
\begin{split}
&\p\left(O(\wL)(\{x,y\})=1\,\big\vert\, L_\cdot(\cL)=\ell\right)=\dfrac{I_{1-\alpha}\left(2C_{xy}\sqrt{\ell_x\ell_y}\right)}{I_{\alpha-1}\left(2C_{xy}\sqrt{\ell_x\ell_y}\right)}.
\end{split}
\end{align}
\end{lemma}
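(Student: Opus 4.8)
The plan is to derive this statement by combining the conditional independence and explicit crossing‑probability information contained in Proposition \ref{initialcrossing} with the conditional law described in Lemma \ref{olo}. The point is that $O(\wL)(\{x,y\})=1$ exactly when the field $L_\cdot(\wL)$ has no zero on $I_{\{x,y\}}$, and this can fail in two disjoint ways: either $\{x,y\}$ is not crossed by $\cL$ at all (i.e. $O(\cL)(\{x,y\})=0$) and then, conditionally, the metric‑graph excursions put a zero on the interval with probability $\bK_{1-\alpha}(2C_{xy}\sqrt{\ell_x\ell_y})$ by \eqref{eqolo}; or $\{x,y\}$ is crossed, $O(\cL)(\{x,y\})=1$, in which case $L_\cdot(\wL)$ is automatically bounded away from zero on $I_{\{x,y\}}$ (a crossing loop forces strict positivity along the whole edge), so $O(\wL)(\{x,y\})=1$ with conditional probability $1$. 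Hence, writing $z=2C_{xy}\sqrt{\ell_x\ell_y}$,
\begin{align*}
\p\bigl(O(\wL)(\{x,y\})=1 \,\big\vert\, L_\cdot(\cL)=\ell\bigr)
= \p\bigl(O(\cL)(\{x,y\})=1 \,\big\vert\, L_\cdot(\cL)=\ell\bigr)
+ \p\bigl(O(\cL)(\{x,y\})=0 \,\big\vert\, L_\cdot(\cL)=\ell\bigr)\bigl(1-\bK_{1-\alpha}(z)\bigr).
\end{align*}

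Next I would compute the conditional law of $O(\cL)(\{x,y\})$ given $L_\cdot(\cL)=\ell$. Recall that $\theta(\cL)$ is a sourceless $\alpha$‑random network associated to $L_\cdot(\cL)$ by (the loop‑soup analogue of) Proposition \ref{initialcrossing}: conditionally on $L_\cdot(\cL)=\ell$, the crossing count $\theta(\cL)(\{x,y\})$ across edge $\{x,y\}$ follows the Bessel$(\alpha-1,z)$ distribution, and $O(\cL)(\{x,y\})=1$ iff this count is $\ge 1$. Therefore
\begin{align*}
\p\bigl(O(\cL)(\{x,y\})=0 \,\big\vert\, L_\cdot(\cL)=\ell\bigr)
= b_{\alpha-1,z}(0) = \frac{I_{\alpha-1}(z)^{-1}(z/2)^{\alpha-1}}{\Gamma(\alpha)} = \frac{(z/2)^{\alpha-1}}{\Gamma(\alpha)\,I_{\alpha-1}(z)}.
\end{align*}
Plugging this in and using $\bK_{1-\alpha}(z) = \frac{2(z/2)^{1-\alpha}}{\Gamma(1-\alpha)}K_{1-\alpha}(z)$ together with the identity $K_\nu(z)=\tfrac12\Gamma(\nu)\Gamma(1-\nu)\bigl(I_{-\nu}(z)-I_\nu(z)\bigr)$ (so that $\frac{(z/2)^{\alpha-1}}{\Gamma(\alpha)}\bigl(1-\bK_{1-\alpha}(z)\bigr)$ telescopes), a short computation should collapse the right‑hand side to $\frac{(z/2)^{1-\alpha}}{\Gamma(2-\alpha)\,I_{\alpha-1}(z)}\cdot$(something) and ultimately to $I_{1-\alpha}(z)/I_{\alpha-1}(z)$; I expect the Bessel‑function bookkeeping here (matching $\Gamma$‑factors and powers of $z/2$) to be the one genuinely fiddly but purely mechanical step.

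For the independence claim, the argument is structural: conditionally on $L_\cdot(\cL)$, the variables $\bigl(O(\cL)(e)\bigr)_{e\in E}$ are independent (the edgewise crossing counts are conditionally independent by Proposition \ref{initialcrossing}), and Lemma \ref{olo} asserts that, conditionally on $\cL$ itself, the extra randomness $\bigl(O(\wL)(e):O(\cL)(e)=0\bigr)$ is independent across those edges with $O(\cL)(e)=0$ and depends on $\cL$ only through $L_\cdot(\cL)$ on the two endpoints of $e$. Combining these two layers — first reveal $L_\cdot(\cL)$, then the conditionally independent crossing pattern $O(\cL)$, then the conditionally independent zero/no‑zero decorations on the non‑crossed edges — shows that $O(\wL)(e)$ is a deterministic function of independent (given $L_\cdot(\cL)$) ingredients, one bundle per edge, hence the $\bigl(O(\wL)(e)\bigr)_{e\in E}$ are conditionally independent given $L_\cdot(\cL)$. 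The main obstacle is not conceptual but is making precise that $O(\cL)(\{x,y\})$ depends on $\cL$ only through $L_\cdot(\cL)$ at $x,y$ jointly with Lemma \ref{olo}, i.e. assembling the two conditional structures into a single product form without circular conditioning; once the filtrations are ordered correctly (field, then crossings, then edge decorations) this is routine.
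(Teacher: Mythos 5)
Your proposal follows the paper's own route: split on whether the edge is crossed (i.e.\ on $O(\cL)(\{x,y\})$), compute $\p\bigl(O(\cL)(\{x,y\})=0\mid L_\cdot(\cL)=\ell\bigr)$ as the Bessel$(\alpha-1,z)$ mass at $0$ from Proposition~\ref{initialcrossing}, and combine with~\eqref{eqolo}; the step you flagged as mechanical does close, since with $K_{1-\alpha}(z)=\tfrac12\Gamma(\alpha)\Gamma(1-\alpha)\bigl(I_{\alpha-1}(z)-I_{1-\alpha}(z)\bigr)$ one gets $\tfrac{(z/2)^{\alpha-1}}{\Gamma(\alpha)I_{\alpha-1}(z)}\,\bK_{1-\alpha}(z)=1-\tfrac{I_{1-\alpha}(z)}{I_{\alpha-1}(z)}$, which plugged into your additive decomposition gives exactly $\tfrac{I_{1-\alpha}(z)}{I_{\alpha-1}(z)}$. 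The paper's proof records only the formula step (as $1$ minus the product of~\eqref{eqolo} and~\eqref{b0}) and leaves the edge-wise conditional independence implicit; your layered-conditioning argument is the standard way to fill that in.
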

\begin{proof}[Proof of Lemma \ref{olo}]
The independence follows from an argument using the Ray-Knight identity of $B$ and the excursion theory similar to that in the proof of Proposition \ref{initialcrossing}. With the same idea as \cite[\S3]{lupu2016loop}, conditionally on $O(\cL)(\{x,y\})=0$, the trace of $\wL$ in $\{x,y\}$ consists of the loops entirely contained in $I_{\{x,y\}}$, excursions from and to $x$ (resp. $y$) inside $I_{\{x,y\}}$ of loops in $\wL$ visiting $x$ (resp. $y$). By considering the contribution to the occupation time field by each part, we have that the left-hand side of \eqref{eqolo} is the same as the probability that the sum of three independent processes
\begin{align}\label{zee}
\left(a_t^{(h)}+b^{(h,\ell_x)}_t+b_{h-t}^{(h,\ell_y)}\right)_{0\le t\le h}
\end{align}
has a zero on $(0, h)$. Here $h=1/2C_{xy}$, $(a_t^{(h)})_{0\le t\le h}$ is a $\text{BESQ}^{2\alpha,h}_{0\rightarrow 0}$ (i.e. a $2\alpha$-dimensional BESQ bridge from $0$ to $0$ over $[0,h]$) and $(b_t^{(h,l)})_{0\le t\le h}$ is a $\text{BESQ}^{0,h}_{l\rightarrow 0}$.

For \eqref{zee} to have a zero, the process $b^{(h,\ell_x)}_t$ has to hit $0$ before the last zero of $(a_t^{(h)}+b_{h-t}^{(h,\ell_y)})_{0\le t\le h}$. The density of the first zero of  $b^{(h,\ell_x)}_t$ is

\begin{align}\label{be}
1_{\{0<t<h\}}\dfrac{\ell_x}{2t^2}\exp\left(-\frac{(h-t)\ell_x}{2ht}\right)\d t.
\end{align}
To get this, one can start with the well-known fact that the first zero of a $2\delta$-dimensional BESQ process starting from $x$ is distributed as $x^2/2\text{Gamma}(1-\delta,1)$ (Cf. for example \cite[Proposition~2.9]{lawler2018notes}). Then use the fact that for a $2\delta$-dimensional BESQ process $\rho_t$ starting from $x$, the process $h(1-u/h)^2\rho_{u/(h-u)}$ $(0\le u\le h)$ is a $\text{BESQ}^{2\delta,h}_{x\rightarrow 0}$.

Since $(a_t^{(h)}+b_{h-t}^{(h,\ell_y)})_{0\le t\le h}$ has the same law as a $\text{BESQ}^{2\alpha,h}_{0\rightarrow\ell_y}$, its last zero has the same law as $h$ minus the first zero of a $\text{BESQ}^{2\alpha,h}_{\ell_y \rightarrow 0}$, which has the density
\begin{align}\label{lz}
    1_{\{0<t<h\}}\dfrac{2^{\alpha-1}}{\Gamma(1-\alpha)}h^\alpha \ell_y^{1-\alpha}t^{-\alpha}(h-t)^{\alpha-2}\exp{\left(-\frac{\ell_yt}{2h(h-t)}\right)}\d t.
\end{align}

 
Gathering \eqref{be}, \eqref{lz} and taking $h=1/2C_{xy}$, we get the probability of \eqref{zee} having a zero is 
\begin{align*}
&\int_0^h\int_0^t\dfrac{\ell_x}{2s^2}\exp\left(-\frac{(h-s)\ell_x}{2hs}\right)\dfrac{2^{\alpha-1}}{\Gamma(1-\alpha)}h^\alpha \ell_y^{1-\alpha}t^{-\alpha}(h-t)^{\alpha-2}\exp{\left(-\frac{\ell_yt}{2h(h-t)}\right)}\d s\d t\\
&=\dfrac{2^{\alpha-1}}{\Gamma(1-\alpha)}\ell_y^{1-\alpha}h^\alpha\exp\left(\dfrac{\ell_y+\ell_x}{2h}\right)\int_0^h\exp\left(-\dfrac{\ell_x}{2t}-\dfrac{\ell_y}{2(h-t)}\right)t^{-\alpha}(h-t)^{\alpha-2}\d t\\
&=\dfrac{1}{\Gamma(1-\alpha)}\int_0^\infty\exp\left(-\frac{\ell_x\ell_y}{(2h)^2s}-s\right)\dfrac{\d s}{s^\alpha}\\
&=2\Gamma(1-\alpha)^{-1}\left(C_{xy}\sqrt{\ell_x\ell_y}\right)^{1-\alpha}K_{1-\alpha}\left(2C_{xy}\sqrt{\ell_x\ell_y}\right),
\end{align*}
where in the second equality, we use the change of variable $s=\dfrac{\ell_yt}{2h(h-t)}$ and the last equality follows from \cite[(136)]{Yor}.
\end{proof}

\begin{proof}[Proof of Lemma \ref{ool}]
By Proposition \ref{initialcrossing}, 
\begin{align}\label{b0}
\p\left(O(\cL)(\{x,y\})=0 \vert L_\cdot(\cL)=\ell\right)=\dfrac{\left(C_{xy}\sqrt{\ell_x\ell_y}\right)^{\alpha-1}}{\Gamma(\alpha)I_{\alpha-1}(2C_{xy}\sqrt{\ell_x\ell_y})}.
\end{align}
The probability we are interested in equals $1$ minus the multiplication of \eqref{eqolo} and \eqref{b0}.
\end{proof}
\begin{proof}[Proof of Proposition \ref{cxcx}]
By the Ray-Knight identity of $B$, we have $\widetilde{\phi}_{\tau_u}$ has the law of $L_\cdot(\wL^{(u)})$. So it follows from Lemma \ref{ool} that $(\widetilde{\phi}_{\tau_u},\widetilde{\O}_{\tau_u})$ has the same law as $(L_\cdot(\cL^{(u)}),\O^{(u)})$. That concludes (1).

For (2), if $X$ jumps through the edge $\{x,y\}$, then $B$ crosses the interval $I_{\{x,y\}}$, which makes $\widetilde{\phi}_t$ positive on this interval. So $\widetilde{\O}_t(\{x,y\})$ turns to $1$.

It remains to calculate the rate of opening an edge without $X$ jumping. First it is easy to deduce that $(\widetilde{\cX},\phi)$ is a Markov process, which
allows us to condition only on $(\widetilde{\cX}_t,\phi_t)$ when calculating the rate. Note that the case of opening $\{x,y\}$ without $X$ jumping can happen only when $X$ stays at $x$ and $x=\mathfrak{p}(y)$. We use $\{X_{[t,t+\Delta t]}=x\}$ to stand for the event that $X$ stays at $x$ during $[t,t+\Delta t]$. Denote by $A_1$ the event that at some time in $[t,t+\Delta t]$, $\{x,y\}$ is opened without $X$ jumping. And let $A_2:=A_1\cap \{X_{[t,t+\Delta t]}=x\}$. For $(\widetilde{\cX}_t,\phi_t)$ with $X_t=x$ and $\widetilde{\O}_t(\{x,y\})=0$, if we write $\q=\p(\cdot\,\vert \,X_t=x,\phi_t)$, then
\begin{align*}
    &\quad\,\p(A_2\,\vert \,\widetilde{\cX}_t,\phi_t)=\q( \widetilde{\O}_{t+\Delta t}(\{x,y\})=1, X_{[t,t+\Delta t]}=x \,\vert \,\widetilde{\O}_t(\{x,y\})=0)\\
    &=\q(\widetilde{\O}_t(\{x,y\})=0)^{-1}\cdot\Big[ \q(\widetilde{\O}_t(\{x,y\})=0, X_{[t,t+\Delta t]}= x)\\
    &\quad-\q(\widetilde{\O}_{t+\Delta t}(\{x,y\})=0, X_{[t,t+\Delta t]}=x)\Big]\\
    &=\q(X_{[t,t+\Delta t]}=x)\cdot\Bigg[1-\dfrac{\q(\widetilde{\O}_{t+\Delta t}(\{x,y\})=0 \,\vert \,X_{[t,t+\Delta t]}= x)}{\q(\widetilde{\O}_t(\{x,y\})=0)}\Bigg].
\end{align*}
By considering the print of $B$ on the branch at $x$ containing $y$ and using the Ray-Knight identity of $B$, we can readily check that the fraction in the above square brackets equals
\begin{align*}
    \dfrac{\p\big(O(\wL)(\{x,y\})=0\,\vert \,O(\cL)(\{x,y\})=0,L_x(\cL)=\phi_t(x)+\Delta t,L_y(\cL)=\phi_t(y)\big)}{\p\big(O(\wL)(\{x,y\})=0\,\vert \,O(\cL)(\{x,y\})=0,L_x(\cL)=\phi_t(x),L_y(\cL)=\phi_t(y)\big)},
\end{align*}
which further equals $f(\phi_t(x)+\Delta t)/f(\phi_t(x))$ with $f(s):=\mathbb{K}_{1-\alpha}\left(2C_{xy}\sqrt{
s\phi_t(y)}\right)$ by Lemma \ref{olo}. Therefore,
\begin{align*}
    \p(A_2\,\vert \,\widetilde{\cX}_t,\phi_t)=-\dfrac{f^\prime(\phi_t(x))}{f(\phi_t(x))}\Delta t+o(\Delta t).
\end{align*}
Observe that $A_1\setminus A_2\subset A_1\cap \{X \text{ has a jump during }[t,t+\Delta t]\}$. We can readily deduce that the conditional probability of the latter event is $o(\Delta t)$. So
\begin{align*}
    \p(A_1\,\vert \,\widetilde{\cX}_t,\phi_t)=\p(A_2\,\vert \,\widetilde{\cX}_t,\phi_t)+o(\Delta t)=-\dfrac{f^\prime(\phi_t(x))}{f(\phi_t(x))}\Delta t+o(\Delta t),
\end{align*}
which leads to the rate in \eqref{o1} by using $[z^\nu K_\nu(z)]^\prime=-z^\nu K_{\nu-1}(z)$ and $K_{-\nu}=K_\nu$.
\end{proof}



\subsection{Percolation-vertex repelling jump process}\label{proofinvertrk}
Let  $\overleftarrow{\cX}=(\overleftarrow{X}_t,\overleftarrow{\O}_t)_{0\le t\le \tau_u}$ be the time reversal of $\cX$, i.e.  $\overleftarrow{\cX}=\left(\cX_{\tau_u-t}\right)_{0\le t\le \tau_u}$. In this part, we will verify that for any $\lambda\in \mathfrak{R}$ and configuration $O$ on $E$, the jump rate of $\overleftarrow{\cX}$ conditionally on $\tau_u<T^X$ and $\left(\phi^{(u)}, \O^{(u)}\right)=(\lambda,O)$ is given by \eqref{cxrate}, which leads to Theorem \ref{invertfinitecluster}.

Recall the notations in \eqref{ratesymbol}. In the following, we will use $\Lambda_t$ and $\varphi_t$ to represent $\Lambda_t(\lambda,\overleftarrow{X}_{[0,t]})$ and $\varphi_t(\lambda,\overleftarrow{X}_{[0,t]})$ respectively. Note that given $\phi^{(u)}=\lambda$, the aggregated local time $\phi_{\tau_u-t}$ equals $\Lambda_t$. 
Let us condition on $\tau_u<T^X$, $\left(\phi^{(u)}, \O^{(u)}\right)=(\lambda,O)$ and the path $\overleftarrow{\cX}_{[0,t]}$ with $\overleftarrow{X}_t=x$, and calculate the jump rate of the edge $\{x,y\}$, i.e. the rate of the jump from $x$ to $y$ by $\overleftarrow{X}_t$ without modifying $\overleftarrow{\O}_t$, or the closure of $\{x,y\}$ in $\overleftarrow{\O}_t$ without $\overleftarrow{X}$ jumping, or the jump and closure simultaneously. Due to the Markov property of $\cX$, it suffices to condition only on $(\overleftarrow{\cX}_t,\Lambda_t)$. For simplicity, denote by $\q=\p(\cdot \,\vert \,\overleftarrow{X}_t=x,\Lambda_t)=\p(\cdot\,\vert \, X_{\tau_u-t}=x,\phi_{\tau_u-t})$ henceforth.
\subsubsection{Two-point case}\label{twopoint}
We start with the simplest case: $\T$ contains only two vertices.
The jump rate is analyzed in the following two cases.


(1) if $y=\mathfrak{p}(x)$, it holds that $\overleftarrow{\O}_t(\{x,y\})=1$. This allows us to further ignore the condition on $\overleftarrow{\O}_t(\{x,y\})$. Note that the law of $\overleftarrow{X}$  given $\phi^{(u)}=\lambda$ and $\tau_u<T^X$ is also $\p_{x_0}^\lambda$ (defined in \S\ref{qwer666}). So the rate of the jump from $x$ to $y$ by $\overleftarrow{X}$ at time $t$ is $C_{xy}\sqrt{\dfrac{\Lambda_t(y)}{\Lambda_t(x)}}\cdot\dfrac{I_{\alpha-1}(\varphi_t(xy))}{I_{\alpha}(\varphi_t(xy))}$ as shown in \eqref{xxrate}.
    
We further consider the probability that the jump is accompanied by the closure of $\{x,y\}$ in $\O$. Observe that this happens if and only if $\O_{(\tau_u-t)-}(\{x,y\})=0$. Conditionally on $\Lambda_t$ and $\overleftarrow{X}$ jumping from $x$ to $y$ at time $t$, $\O_{(\tau_u-t)-}(\{x,y\})$ has the same law as $O(\wL)(\{x,y\})$ given $L_x(\wL)=\Lambda_t(x)$ and $L_y(\wL)=\Lambda_t(y)$. By Lemma \ref{ool}, the conditional probability that
$\O_{(\tau_u-t)-}(\{x,y\})=0$ is
\begin{equation}\label{opco}
    1-\dfrac{I_{1-\alpha}(\varphi_t(xy))}{I_{\alpha-1}(\varphi_t(xy))}.
\end{equation}
Therefore,
$\overleftarrow{X}$ jumps from $x$ to $y$ and $\overleftarrow{\O}(\{x,y\})$ turns to $0$ at time $t$ with rate
 \begin{align*}
 \begin{aligned}
     &\quad C_{xy}\sqrt{\dfrac{\Lambda_t(y)}{\Lambda_t(x)}}\cdot\dfrac{I_{\alpha-1}(\varphi_t(xy))}{I_{\alpha}(\varphi_t(xy))}\cdot\Big(1-\dfrac{I_{1-\alpha}(\varphi_t(xy))}{I_{\alpha-1}(\varphi_t(xy))}\Big)\\
     &=C_{xy}\sqrt{\dfrac{\Lambda_t(y)}{\Lambda_t(x)}}\cdot\dfrac{I_{\alpha-1}(\varphi_t(xy))-I_{1-\alpha}(\varphi_t(xy))}{I_{\alpha}(\varphi_t(xy))};
 \end{aligned}
 \end{align*}
  while $\overleftarrow{X}$ jumps from $x$ to $y$ at time $t$ without modifying $\overleftarrow{\O}_t$ with rate 
        \[
        C_{xy}\sqrt{\dfrac{\Lambda_t(y)}{\Lambda_t(x)}}\cdot\dfrac{I_{1-\alpha}(\varphi_t(xy))}{I_{\alpha}(\varphi_t(xy))}.
        \]
   
   (2) if $x=\mathfrak{p}(y)$, there are two possible jumps: 
   $\overleftarrow{X}$ jumps from $x$ to $y$ without modifying $\overleftarrow{\O}_t$, or $\{x,y\}$ is closed without $\overleftarrow{X}$ jumping. Denote by $A_1$ (resp. $A_2$) the event that there is a jump of the first (resp. second) kind occurs during $[t,t+\Delta t]$. Note that $A_1$ can happen only when $\overleftarrow{\O}_t(\{x,y\})=1$. We have
    \begin{align}\label{stayprob}
    \begin{split}
        &\quad\,\p(A_1\,\vert \,\overleftarrow{X}_{t}=x,\overleftarrow{\O}_{t}(\{x,y\}=1,\Lambda_t)
        ={\q(A_1)}/{\q(\overleftarrow{\O}_{t}(\{x,y\})=1)}\\
          &= C_{xy}\sqrt{\dfrac{\Lambda_t(y)}{\Lambda_t(x)}}\cdot\dfrac{I_{\alpha}\left(\varphi_t(xy)\right)}{I_{\alpha-1}\left(\varphi_t(xy)\right)}\Delta t\cdot \Bigg(\dfrac{I_{1-\alpha}(\varphi_t(xy))}{I_{\alpha-1}(\varphi_t(xy))}\Bigg)^{-1}+o(\Delta t)\\
           &= C_{xy}\sqrt{\dfrac{\Lambda_t(y)}{\Lambda_t(x)}}\cdot\dfrac{I_{\alpha}\left(\varphi_t(xy)\right)}{I_{1-\alpha}\left(\varphi_t(xy)\right)}\Delta t+o(\Delta t).
          \end{split}
    \end{align}

Now we turn to $A_2$. Observe that $A_2$ is also the event that at some time in $[\tau_u-t-\Delta t,\tau_u-t]$, $\{x,y\}$ is opened in $\O$ without $X$ jumping. Let $A_3=A_2\cap \{X_{[\tau_u-t-\Delta t,\tau_u-t]}=x\}$. Then
 \begin{align*}\label{poiu}
    \begin{split}
    &\quad\,\p(A_3\,\vert \,\overleftarrow{X}_{t}=x,\overleftarrow{\O}_{t}(\{x,y\})=1,\Lambda_t)\\
     &=\q(\O_{\tau_u-t-\Delta t}(\{x,y\})=0,X_{[\tau_u-t-\Delta t,\tau_u-t]}=x\,\vert \,\O_{\tau_u-t}(\{x,y\})=1)\\
     &=\q(X_{[\tau_u-t-\Delta t,\tau_u-t]}=x\,\vert \,\O_{\tau_u-t}(\{x,y\})=1)\\
     &-\q(\O_{\tau_u-t-\Delta t}(\{x,y\})=1,X_{[\tau_u-t-\Delta t,\tau_u-t]}=x\,\vert \,\O_{\tau_u-t}(\{x,y\})=1)=:q_1-q_2.
          \end{split}
    \end{align*}
    We observe that  \[q_1=1-C_{xy}\sqrt{\dfrac{\Lambda_t(y)}{\Lambda_t(x)}}\cdot\dfrac{I_{\alpha}\left(\varphi_t(xy)\right)}{I_{1-\alpha}\left(\varphi_t(xy)\right)}\Delta t+o(\Delta t)\]
    by \eqref{stayprob}; while if we denote $h(s)=\frac{I_{1-\alpha}(2C_{xy}\sqrt{s\Lambda_t(y)})}{I_{\alpha-1}(2C_{xy}\sqrt{s\Lambda_t(y)})}$, then by Lemma \ref{ool},
\begin{align*}
    q_2&=q_1 \p(\O_{\tau_u-t-\Delta t}=1\,\vert \,X_{\tau_u-t-\Delta t}=x,\Lambda_{\tau_u-t-\Delta t}=\ell-\Delta t\cdot\delta_x)\vert _{\ell=\Lambda_t}\\
    &=q_1 h(\Lambda_t-\Delta t)=q_1 [h(\Lambda_t)-h^\prime (\Lambda_t)\Delta t]\\
    &=1-C_{xy}\sqrt{\dfrac{\Lambda_t(y)}{\Lambda_t(x)}}\cdot\dfrac{I_{-\alpha}\left(\varphi_t(xy)\right)}{I_{1-\alpha}\left(\varphi_t(xy)\right)}\Delta t+o(\Delta t).
\end{align*}
So $\p(A_3\,\vert \,\overleftarrow{X}_{t}=x,\overleftarrow{\O}_{t}(\{x,y\})=1,\Lambda_t)$ equals
\begin{align}\label{raterate6666}
    C_{xy}\sqrt{\dfrac{\Lambda_t(y)}{\Lambda_t(x)}}\cdot\dfrac{I_{-\alpha}(\varphi_t(xy))-I_{\alpha}(\varphi_t(xy))}{I_{1-\alpha}(\varphi_t(xy))}\Delta t+o(\Delta t).
\end{align}
Since $A_2\setminus A_3\subset \big\{\overleftarrow{X}\text{ has more than $2$ jumps during }[t,t+\Delta t]\big\}$, by Lemma \ref{twojumporder} we have the probability of $A_2\setminus A_3$ is $o(\Delta t)$ under $\q(\cdot\,\vert \,\O_{\tau_u-t}(\{x,y\})=1)$. So $\p(A_2\,\vert \,\overleftarrow{X}_{t}=x,\overleftarrow{\O}_{t}(\{x,y\})=1,\Lambda_t)$ also equals \eqref{raterate6666}. We have thus shown all the rates in \eqref{cxrate} in the two-point case. The rates already determines the process (Cf. Appendix \ref{selfinteracting}). So we are done.
    

\subsubsection{General case}
For general $\T$, the strategy is to introduce a coupling, which connects the rate in the general case with that in the two-point case. In the following, to emphasize the tree where $\cX$ is defined, we also write  $\overleftarrow{\cX}=\overleftarrow{\cX}^\T=(\overleftarrow{X}^\T,\overleftarrow{\O}^\T)$. Recall that we condition on $(\overleftarrow{\cX}_t,\Lambda_t)$ with $\overleftarrow{X}_t=x$.

If $y=\mathfrak{p}(x)$, divide $\T$ into $3$ parts: $\T_i=(V_i,E_i)~(i=1,2,3)$ is the connected subgraph of $\T$ with $V_2=\{x,y\}$, $V_3=\{z\in V:z=x\text{ or }x<z\}$, $V_1=V\setminus V_3$. Each $\T_i$ is equipped with the conductances and killing measure such that the induced CTMC on $\T_i$ has the law of the print of $X$ on $V_i$\footnote{For example, for $\T_2$, the conductance on $\{x,y\}$ is $C_{xy}$ and the killing measure on $x$ (resp. $y$) is the effective conductance (on $\T$) from $x$ (resp. $y$) to the infinity point after cutting the edge $\{x,y\}$.}.  View $x_0$, $y$ and $x$ as the roots of $\T_1$, $\T_2$ and $\T_3$ respectively. We have the following coupling. Note that $(\overleftarrow{\cX},\Lambda)=(\overleftarrow{\cX}^\T,\Lambda^\T)$ is a Markov process. So we can naturally define $(\overleftarrow{\cX},\Lambda)$ starting from a given pair $(x,O,\lambda)$, which represents the starting point, initial configuration and time field respectively. 
Now we consider $(\overleftarrow{\cX}^{\T_i},\Lambda^{\T_i})$ starting from $(x_i,\O_t\vert_{E_i},\Lambda_t\vert_{V_i})$ respectively, where $x_i=\left\{\begin{aligned}
        y,&\text{ if }i=1;\\
        x,&\text{ if }i=2,3;
    \end{aligned}\right.$.
If we glue the excursions of $\overleftarrow{X}^{\T_i}$ at $x$ and $y$ according to the local time, preserve the evolution of $\overleftarrow{O}^{\T_i}$ along with $\overleftarrow{X}^{\T_i}$, and recover a process from the glued excursion process starting from $x$ up to the time when the local time at $x_0$ is exhausted (some of the excursions are not used in the recovery procedure), then the process obtained has the law of $\big(\overleftarrow{\cX}_{t+s}:0\le s\le T^{\cX}-t\big)$ given $(\overleftarrow{\cX}_t,\Lambda_t)$.


In the following, assume $(\overleftarrow{\cX},\Lambda)$ and $(\overleftarrow{\cX}^{\T_2},\Lambda^{\T_2})$ are coupled in the above way. Let $A_1$ and $A_2$ (resp. $A^\prime_1$ and $A^\prime_2$) be the events that $\overleftarrow{X}$ (resp. $\overleftarrow{X}^{\T_2}$) jumps from $x$ to $y$ with and without the closure of $\{x,y\}$ in $\overleftarrow{\O}$ (resp. $\overleftarrow{\O}^{\T_2}$) during $[t,t+\Delta t]$ (resp. $[0,\Delta t]$) respectively. It follows direct from the coupling that for $(\overleftarrow{\cX}_t,\Lambda_t)$ with $\overleftarrow{X}_t=x$ and $\overleftarrow{\O}_t(\{x,y\})=1$,
\begin{align}\label{inequalequal}
        \p(A_j\vert \overleftarrow{\cX}_t,\Lambda_t)\le \p(A^\prime_j\vert \overleftarrow{\cX}_t,\Lambda_t)=\p\big(A^\prime_j\big\vert \overleftarrow{\cX}^{\T_2}_{Q_2(t)},\Lambda^{\T_2}_{Q_2(t)}\big)~(j=1,2),
\end{align}
where $Q_2(t)=\int_0^t 1_{\{\overleftarrow{X}_s\in V_2\}}\d s$.

Recall that $\q=\p(\cdot \,\vert \,\overleftarrow{X}_t=x,\Lambda_t)$. It is plain that
\begin{align}\label{inequaltoequal1}
\begin{aligned}
    &\quad\q(\overleftarrow{\O}_t(\{x,y\})=1)\cdot\big[\p(A_1\vert \overleftarrow{\cX}_t,\Lambda_t)+\p(A_2\vert \overleftarrow{\cX}_t,\Lambda_t)\big]\\
    &=\q\big(\text{there is a jump of $\overleftarrow{X}$ from $x$ to $y$ during }[t,t+\Delta t]\big).
\end{aligned}
\end{align}
On the other hand, the jump rates of $\overleftarrow{\cX}^{\T_2}$ have been calculated in \S\ref{twopoint}. Together with the rates of $\overleftarrow{X}$ shown in Theorem \ref{invertfinite}, we can readily verifies that 
\begin{align}\label{inequaltoequal2}
\begin{aligned}
    &\quad\,\q(\overleftarrow{\O}_t(\{x,y\})=1)\cdot\Big[\p\big(A^\prime_1\big\vert \overleftarrow{\cX}^{\T_2}_{Q_2(t)},\Lambda^{\T_2}_{Q_2(t)}\big)+\p\big(A^\prime_2\big\vert \overleftarrow{\cX}^{\T_2}_{Q_2(t)},\Lambda^{\T_2}_{Q_2(t)}\big)\Big]\\
    &=\q\big(\text{there is a jump of $\overleftarrow{X}$ from $x$ to $y$ during }[t,t+\Delta t]\big)+o(\Delta t).
\end{aligned}
\end{align}
Comparing \eqref{inequaltoequal1} and \eqref{inequaltoequal2}, we have the inequality in \eqref{inequalequal} is in fact equality up to a difference of $o(\Delta t)$. This gives the jump rates.

The jumps rates in the case of $x=\mathfrak{p}(y)$ can also be deduced from the jump rates in the two-point case using a similar argument. We safely leave the details to the readers.

\section{Mesh limits of repelling jump processes}\label{convergencesection}
In this section, we first introduce the  self-repelling diffusion $W^\lambda$ which inverts the Ray-Knight identity of reflected Brownian motion, and then show that $W^\lambda$ is the mesh limit of vertex repelling jump processes as stated in Theorem \ref{convergence}. 

\subsection{Self-repelling diffusions}

Let $\cZ$ be the process whose local time flow is $\text{Jacobi} (2\alpha,0)$ flow, which is the so-called burglar. See \cite[\S 5]{Aidekon} and \cite{warren98} for details.
 
 For any non-negative continuous function $f$ on $\r^+$ with $f(0)>0$, denote by $\mathfrak{d}_f$ the hitting time of $0$ by $f$, i.e. $\mathfrak{d}_f:=\inf\{x>0:f(x)=0\}\in(0,\infty]$. We call $f$ admissible if
\[\int_0^{\mathfrak{d}_f} f(x)^{-1}\d x=\infty.\]
Let $\widetilde{\mathfrak{R}}$ be the set of  non-negative, continuous, admissible functions $\lambda$ on $\r^+$ with $\lambda(0)>0$ when $\alpha>0$ and further with finite and connected support when $\alpha=0$. The $\text{BESQ}^{2\alpha}$ process is in $\widetilde{\mathfrak{R}}$ a.s..

For $f\in \widetilde{\mathfrak{R}}$, define the following change of scale $\eta_f$ and change of time $K_f$ associated to a deterministic continuous process $R$ on $\r^+$ starting from $0$:
\begin{align*}
&\eta_f(y)=\int_0^yf(x)^{-1}\d x, y\in [0, \mathfrak{d}_f),\\
& K_f(t)=K^R_f(t)=\int_0^t \big(f\circ\eta_{f}^{-1}(R_s)\big)^2\d s,\ t\in[0,H^{R}_{\mathfrak{d}_f}).
\end{align*}

Set $\Phi(R_{[0,a)},f):=\left(\eta^{-1}_f\big(R_{K_f^{-1}(t)}\big):t\in [0,K_f(a))\right)$ for $0<a\le H^{R}_{\mathfrak{d}_f}$. 

For any $\lambda\in \widetilde{\mathfrak{R}}$, define the self-repelling diffusion $W^\lambda$ as follows:
\begin{itemize}
    \item when $\alpha=0$, let $\cZ^{(1)}$,  $\cZ^{(2)}$ and $\mathcal{J}^{2,2}$ be three independent processes such that $\cZ^{(1)}$ and  $\cZ^{(2)}$ are identically distributed as $\cZ$ and $\mathcal{J}^{2,2}$ is a diffusion on $[0,1]$ with infinitesimal generator $2x(1-x)\d^2x+2(1-2x)\d x$. Set $\lambda^{(1)}=\lambda(x)\mathcal{J}^{2,2}_{\eta_\lambda(x)}$, $\lambda^{(2)}(x)=\lambda(x)-\lambda^{(1)}(x)$ and $t^{(i)}=K^{Z^{(i)}}_{\lambda^{(i)}}(\infty)$\footnote{It holds that $T^{Z^{(i)}}=\infty$ (Cf. \cite{warren98}).} $(i=1,2)$. Define
\begin{align}\label{zalphaequal}
W^\lambda_t:=
\begin{cases}
\Phi(\cZ^{(1)},\lambda^{(1)},\infty)(t), &\text{$0\le t\le t^{(1)}$},\\
    \Phi(\cZ^{(2)},\lambda^{(2)},\infty)(t^{(1)}+t^{(2)}-t), &t^{(1)}< t\le t^{(1)} +t^{(2)};
\end{cases}
\end{align}
\item when $\alpha>0$, set
\begin{equation}\label{zalphalarge}
    W^\lambda:=
\Phi(\cZ,\lambda).
\end{equation}
\end{itemize}
\begin{proposition}[\cite{Aidekon,warren98}]\label{rayline}
Let $(\widetilde{\phi}^{(0)},B_{[0,\tau^B_u]},\widetilde{\phi}^{(u)})$ be a Ray-Knight triple associated to $B$. For any $\lambda\in\widetilde{\mathfrak{R}}$,
$W^\lambda$ has the conditional law of $B_{[0,\tau^B_u]}$ given $\widetilde{\phi}^{(u)}=\lambda$.
\end{proposition}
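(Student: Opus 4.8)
The plan is to obtain Proposition~\ref{rayline} as a reformulation of the inversions of the Ray-Knight identity on the half-line due to Warren--Yor \cite{warren98} (the case $\alpha=0$) and A\"id\'ekon \cite{Aidekon} (general $\alpha$); the real work is to check that the processes $W^\lambda$ written out in \eqref{zalphaequal}--\eqref{zalphalarge} are exactly the ones constructed there. First I would record the isomorphism input: under the coupling of $B$ with the loop soup $\wL$ of intensity $\alpha$, the Brownian version of Proposition~\ref{rayknighttransient} (noted in \S\ref{percolation}) together with the loop-soup isomorphism shows that the spatial process $x\mapsto\widetilde\phi^{(u)}(x)$, $x\in\r^+$, is a $\text{BESQ}^{2\alpha}$ started from $u$ at $x=0$ --- it is the sum of the two independent fields $L^B(\tau^B_u,\cdot)$, a $\text{BESQ}^0$ from $u$ by the second Ray-Knight theorem, and $L_\cdot(\wL^{(0)})$, a $\text{BESQ}^{2\alpha}$ from $0$. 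In particular $\widetilde\phi^{(u)}\in\widetilde{\mathfrak{R}}$ a.s.\ (this is the a.s.\ membership recorded after the definition of $\widetilde{\mathfrak{R}}$), so it suffices to establish the claimed conditional law for each fixed admissible $\lambda$, which is a purely analytic statement about the half-line inversion.

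For $\alpha>0$ the statement is exactly the inversion of \cite[\S5]{Aidekon}. Here I would recall that the burglar $\cZ$ carries the Jacobi$(2\alpha,0)$ local-time flow, and that the content of the inversion is: given the occupation field $\lambda$, the reflected Brownian motion run to $\tau^B_u$ has the law obtained from $\cZ$ by first flattening the field via the change of scale $\eta_\lambda$ --- under which the occupation field of the rescaled path is identically $1$ and its local-time process, read as the spatial level sweeps $[0,\mathfrak{d}_\lambda)$, evolves by the Jacobi$(2\alpha,0)$ flow --- and then undoing the change of time $K_\lambda$ to reinstate the Brownian speed $(\lambda\circ\eta_\lambda^{-1})^2$. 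That is precisely $\Phi(\cZ,\lambda)$ of \eqref{zalphalarge}, and one checks along the way that the resulting lifetime $K_\lambda(H^{\cZ}_{\mathfrak{d}_\lambda})$ is a.s.\ finite, as it must be for $W^\lambda$ to have the law of $B_{[0,\tau^B_u]}$.

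For $\alpha=0$, $\lambda$ is the occupation field of $B$ alone, a $\text{BESQ}^0$ from $u$ which goes extinct at the a.s.\ finite level $\mathfrak{d}_\lambda=M$, and I would invoke \cite{warren98}. Run from $0$ to $\tau^B_u$, $B$ attains its overall maximum $M$ at a unique time and, conditionally on $\lambda$, decomposes there into two conditionally independent halves: an ascending half from $0$ to $M$ and a descending half from $M$ back to $0$, which are exchangeable by time reversal. After the scale change $\eta_\lambda$ and the time change restoring Brownian speed, each half becomes an independent copy of the burglar run against its share $\lambda^{(1)}$, resp.\ $\lambda^{(2)}$, of the occupation field, and the key computation of \cite{warren98} is that, conditionally on $\lambda$, the sharing ratio $x\mapsto\lambda^{(1)}(x)/\lambda(x)$ reparametrised by $\eta_\lambda$ is the Wright--Fisher diffusion $\mathcal{J}^{2,2}$ with generator $2x(1-x)\d^2x+2(1-2x)\d x$. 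This matches \eqref{zalphaequal} on the nose, the reversal $t\mapsto\Phi(\cZ^{(2)},\lambda^{(2)},\infty)(t^{(1)}+t^{(2)}-t)$ expressing that the descending half is run backwards so that $W^\lambda$ returns to $0$ at time $t^{(1)}+t^{(2)}$, as required by $B_{\tau^B_u}=0$.

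The hard part will be the $\alpha=0$ case, namely importing cleanly from \cite{warren98} the decomposition of the conditioned reflected path at its maximum, the conditional independence of the two halves given $\lambda$, and the identification of the occupation-field split with $\mathcal{J}^{2,2}$, and then tracking the time reversal needed to phrase the result for $B_{[0,\tau^B_u]}$ read forwards (the equality in law of $B_{[0,\tau^B_u]}$ with its reversal being itself a fact to be used). The remaining points are routine but must be attended to: reconciling normalisation conventions between \cite{warren98,Aidekon} and the present setup (the edge lengths $1/2C_{xy}$, the factors of $2$ in the generators, and the dictionary ``intensity-$\alpha$ loop soup'' $\leftrightarrow$ ``$\text{BESQ}^{2\alpha}$''), and invoking admissibility $\int_0^{\mathfrak{d}_\lambda}\lambda(x)^{-1}\d x=\infty$ wherever it is needed to ensure that $\eta_\lambda$ is a bijection onto $\r^+$, so that the burglar --- which lives on all of $\r^+$ --- traces out exactly $[0,\mathfrak{d}_\lambda)$ under $\eta_\lambda^{-1}$.
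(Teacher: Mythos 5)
The paper does not prove this proposition: the label [\cite{Aidekon,warren98}] marks it as imported directly from Warren--Yor and A\"id\'ekon, and the text moves straight on to the mesh-limit result. What you have written is therefore a reconstruction of what the citation compresses, not an alternative to anything the paper does. Your outline is sound and matches the intended reading of \eqref{zalphaequal}--\eqref{zalphalarge}: identifying $\widetilde\phi^{(u)}$ with a $\text{BESQ}^{2\alpha}$ started from $u$ (sum of the $\text{BESQ}^0$ from $u$ given by the second Ray--Knight theorem and the independent $\text{BESQ}^{2\alpha}$ from $0$ given by the Brownian loop soup) justifies reducing to a fixed admissible $\lambda$; for $\alpha>0$, A\"id\'ekon's burglar carrying the Jacobi$(2\alpha,0)$ local-time flow, read through $\eta_\lambda$ and $K_\lambda$, gives $\Phi(\cZ,\lambda)$; for $\alpha=0$, Warren--Yor's decomposition of the conditioned path at its a.s.\ unique maximum into two exchangeable halves, together with the Wright--Fisher splitting of the occupation field, gives precisely $\lambda^{(1)}(x)=\lambda(x)\mathcal{J}^{2,2}_{\eta_\lambda(x)}$ and the glueing with the time reversal of the second burglar. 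You correctly flag the remaining chores --- finiteness of $K_\lambda(H^\cZ_{\mathfrak{d}_\lambda})$, the reversal bookkeeping, and normalisation conventions (edge lengths $1/2C_{xy}$, factors of $2$ in the generators, the ``intensity-$\alpha$ loop soup $\leftrightarrow \text{BESQ}^{2\alpha}$'' dictionary) --- as the places needing care. One further point worth an explicit line if you write this out: for $\alpha\ge 1$ the field $\lambda$ a.s.\ never reaches zero, so $\mathfrak{d}_\lambda=\infty$ and admissibility becomes $\int_0^\infty\lambda(x)^{-1}\d x=\infty$, which holds for $\text{BESQ}^{2\alpha}$ by the linear growth at infinity rather than by the local behaviour near an absorption time; the two regimes of $\alpha$ should not be lumped together under ``$\text{BESQ}^{2\alpha}\in\widetilde{\mathfrak{R}}$ a.s.'' without comment.
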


We call $W^\lambda$ the self-repelling diffusion on $\r^+$.  
Recall the setting in \S\ref{scalelimit}. Let $\Omega_k$ be the collection of right-continuous, minimal, nearest-neighbor path on $\N_k$. Fix  $\lambda\in\widetilde{\mathfrak{R}}$.
For $k\ge 1$, $x,y\in \N_k$ with $x\sim y$ and $\omega\in \Omega_k$ with $T^\omega>t$, set
\begin{align}\label{lambdak}
\begin{split}
  &\Lambda^{k}_t(x,\omega)=\lambda(x)-2^k\int_0^t 1_{\{\omega_s=x\}}\d s,\\
   &\varphi_t^{k}(xy,\omega)=2C^k_{xy}\sqrt{\Lambda_t^{k}(x,\omega) \Lambda_t^{k}(y,\omega)}.
    \end{split}
\end{align}
It is easy to check that $X^{\lambda,(k)}$ is a jump process on $\N_k$ with jump rates $r^{\lambda,(k)}(x,y)$ and the similar resurrect mechanism and lifetime as before, where
\begin{small}
\begin{align}\label{xrate}
\begin{split}
   r^{\lambda,(k)}(x,y)&:=\left\{\begin{aligned}
    &2^{2k-1}\sqrt{\dfrac{\Lambda^{k}_t(y)}{\Lambda^{k}_t(x)}}\cdot\dfrac{I_{\alpha-1}(\varphi^{k}_t(xy))}{I_{\alpha}(\varphi^{k}_t(xy))},&\text{ if $x=\mathfrak{p}(y)$};\\
  &2^{2k-1}\sqrt{\dfrac{\Lambda^{k}_t(y)}{\Lambda^{k}_t(x)}}\cdot\dfrac{I_{\alpha}(\varphi^{k}_t(xy))}{I_{\alpha-1}(\varphi^{k}_t(xy))},&\text{ if $y=\mathfrak{p}(x)$};\\
   \end{aligned}\right.
  \end{split}
\end{align}
\end{small}

The proof of Theorem \ref{convergence}  follows the same routine as  \cite{LupuEJP657}. So we omit the details here.

\begin{appendix}

\section{Processes with terminated jump rates}\label{selfinteracting}
Let $G=(V,E)$ be a finite or countable graph with finite degree. All the processes considered in this part are assumed to be right-continuous, minimal, nearest-neighbour jump processes on $G$ with a finite or infinite lifetime. We consider the process to stay at a cemetery point $\Delta$ after the lifetime. The collection of all such sample paths  is denoted by $\Omega$, and $\Omega_\infty:=\{\omega\in \Omega:\omega \text{ does not explode}\}$. Let $\left\{\mathscr{F}_t:t\ge 0\right\}$ be the natural filtration of the coordinate process on $\Omega_\infty$. 

Let us introduce some notations that will be used throughout this part. For $0\le a<b\le \infty$, $[a,b\rangle$ represents the interval $[a,b]$ or $[a,b)$ according as $b<\infty$ or $b=\infty$. Let $l\ge 0$, $t\in(0,\infty]$, $\{t_i\}_{1\le i\le l}$ be positive real numbers and $\{x_i\}_{0\le i\le l}$ be vertices in $V$, such that $0=:t_0<t_1<\cdots<t_l\le t_{l+1}:=t$ and $x_0\sim x_1\sim \cdots \sim x_l$. Denote by $\sigma(x_0\overset{t_1}{\rightarrow}x_1\overset{t_2}{\rightarrow}\cdots\overset{t_l}{\rightarrow}x_l\overset{t}{\rightarrow})$ the function: $[0,t\rangle\rightarrow V$, that equals $x_i$ on $[t_i,t_{i+1})$ (resp. $[t_i,t_{i+1}\rangle$) for $i=0,\cdots,l-1$ (resp. $i=l$).  

\begin{definition}[LSC stopping time]\label{defdefdef}
Let $T$ be a stopping time with respect to $\left(\mathscr{F}_t:t\ge 0\right)$, such that it is lower semi-continuous under the Skorokhod topology. We simply say $T$ is a \textit{LSC stopping time}. The notation $1_{\{t<T(\omega_{[0,t]})\}}$ has the natural meaning when $T$ is a stopping time. We further say $T$ is \textit{regular} if for any $\omega$ with $t<T(\omega)$ and $\omega_t=x$, there exists $l=l(\omega_{[0,t]})>0$, such that for any $0<s\le l$ and $y\sim x$, it holds that $T\big(\omega_{[0,t]}\circ \sigma(x\overset{s}{\rightarrow}y\overset{\infty}{\rightarrow})\big)\ge t+l$. Intuitively, if we regard $T(\omega)$ as the lifetime of $\omega$, then the above statement means that if $\omega$ is alive at time $t$, then as long as it has exactly one jump during $[t,t+l]$, it is bound to stay alive until time $t+l$. 
\end{definition}

\begin{definition}[Terminated jump rates]
Given a LSC stopping time $T$. Denote
\[\mathscr{D}_T:=\big\{(t,x,y,\omega)\in \r^+\times V\times V\times \Omega_\infty:0\le t< T(\omega),\,x\sim y,\,\omega_t=x\big\}.\]
A function $r=r_t(x,y)=r(t,x,y,\omega)$: $\mathscr{D}_T\rightarrow \r^+$ is called (a family of) \textit{$T$-terminated jump rates} if (1) it is continuous with respect to the product topology, where $\Omega_\infty$ is equipped with the Skorokhod topology; (2) for any $x\sim y$, the process $\big(r_t(x,y)1_{\{t<T\}}:t\ge 0\big)$ is adapted to $(\mathscr{F}_t)$. For such $r$, we also write $r_t(x,y,\omega)=r_t(x,y,\omega_{[0,t]})$ on $\{t<T(\omega)\}$. 
\end{definition}

From now on, we always assume that $T$ is a LSC stopping time and $r$ is $T$-terminated jump rates.
 
\begin{definition}[Processes with terminated jump rates]\label{processtermjump}
A process $Z=(Z_s:0\le s<T^Z)$ is said to have jump rates $r$ if for any $t\ge 0$, conditionally on $t<T^Z$ and $\left(Z_s:0\le s \le t\right)$ with $Z_t=x$, 
\begin{itemize}
    \item[(R1)] the probability that the first jump of $Z$ after time $t$ occurs in $[t,t+\Delta t]$ and the jump is to a neighbour $y$ is $r_t(x,y,Z_{[0,t]})\Delta t+o(\Delta t)$, where $o(\Delta t)$ depends on the conditioned path $Z_{[0,t]}$,
\end{itemize}
and the process finally stops at time $T^Z$. Here $T^Z=T^Z_0\wedge T^Z_\infty$ with
\begin{align*}
    T^Z_0&:=\sup\{t\ge 0: t<T(Z_{[0,t]})\};\\
    T^Z_\infty&:=\sup\{t\ge 0:Z_{[0,t]}\text{ has finitely many jumps}\}.
\end{align*}
\end{definition}
\begin{remark}\label{R1R2equal}
In the above definition, if it holds that $T$ is regular, then the condition (R1) can be replaced by (R2) below without affecting the law of the defined process (see Remark \ref{R123equal} for details).
\begin{itemize}
    \item[(R2)]  the probability of a jump of $Z$ from $x$ to a neighbour $y$ in $[t,t+\Delta t]$ is $\allowbreak r_t(x,y,Z_{[0,t]})\Delta t+o(\Delta t)$, where $o(\Delta t)$ depends on the conditioned path $Z_{[0,t]}$.
\end{itemize}
\end{remark}

The following renewal property is direct from the definition.
\begin{proposition}[Renewal property]\label{markovproperty}
Suppose $Z$ is a process with jump rates $r$. Then for any $t\ge 0$, conditionally on $t<T^Z$ and $(Z_s:0\le s\le t)$, the process $(Z_{t+s}:0\le s<T^Z-t)$ is a process with jump rates $r^\prime_u(x,y,\omega)=r_{t+u}(x,y,Z_{[0,t)}\circ \omega)$ starting from $Z_t$, where $Z_{[0,t)}\circ \omega$ is a function that equals $Z_{[0,t)}$ on $[0,t)$ and equals $\omega(\cdot-t)$ on $[t,\infty)$. 
\end{proposition}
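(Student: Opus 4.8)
The plan is to unwind Definition~\ref{processtermjump} directly, since the renewal statement is essentially a relabelling of time. Fix $t\ge 0$ and argue conditionally on $\{t<T^Z\}$ together with the path $(Z_s)_{0\le s\le t}$; write $z:=Z_t$ and $Z':=(Z_{t+s})_{0\le s<T^Z-t}$. First I would pin down the LSC stopping time that governs the rates $r'$: one sets $T'(\omega):=T\big(Z_{[0,t)}\circ\omega\big)-t$ for $\omega\in\Omega_\infty$ with $\omega_0=z$. Because $T$ is a stopping time, $1_{\{s<T\}}$ depends only on the path up to time $s$ and, along a fixed path, is the indicator of an interval $[0,T^Z_0\rangle$; hence $t<T^Z\le T^Z_0$ forces $t<T\big(Z_{[0,t]}\big)$, so that $T'$ is well defined and nonnegative on the conditioned event. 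I would then check that $T'$ is again an LSC stopping time for the time-shifted filtration — prepending the fixed finite path $Z_{[0,t)}$ is continuous for the Skorokhod topology and preserves lower semicontinuity — and that $r'_u(x,y,\omega)=r_{t+u}\big(x,y,Z_{[0,t)}\circ\omega\big)$ is a family of $T'$-terminated jump rates: continuity follows from continuity of $r$ composed with $\omega\mapsto Z_{[0,t)}\circ\omega$, and the adaptedness requirement is immediate.

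Next I would verify that the lifetime of \emph{any} process with jump rates $r'$, namely $T^{Z'}_0\wedge T^{Z'}_\infty$, equals $T^Z-t$. On the conditioned event $Z_{[0,t]}$ has only finitely many jumps (minimality together with $t<T^Z\le T^Z_\infty$), so $Z_{[0,t+u]}$ has finitely many jumps iff $Z'_{[0,u]}$ does, whence $T^{Z'}_\infty=T^Z_\infty-t$; and using $Z_{[0,t)}\circ Z'_{[0,u]}=Z_{[0,t+u]}$ one gets
\[
T^{Z'}_0=\sup\{u\ge 0:u<T'(Z'_{[0,u]})\}=\sup\{u\ge 0:t+u<T(Z_{[0,t+u]})\}=T^Z_0-t,
\]
again because $\{s\ge 0:s<T(Z_{[0,s]})\}$ is an interval with left endpoint $0$ and $t<T^Z_0$. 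Hence $T^{Z'}=T^Z-t$; in particular, on our event, conditioning on $\{u<T^{Z'}\}$ is the same as conditioning on $\{t+u<T^Z\}$.

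Finally I would observe that property (R1) for $Z'$ is exactly (R1) for $Z$ shifted by $t$. Fix $u\ge 0$ and condition further on $\{u<T^{Z'}\}$ and on $(Z'_s)_{0\le s\le u}$ with $Z'_u=x$; this is the same as conditioning on $\{t+u<T^Z\}$ and on $(Z_s)_{0\le s\le t+u}$ with $Z_{t+u}=x$. By Definition~\ref{processtermjump} applied to $Z$ at time $t+u$, the probability that the first jump of $Z$ after $t+u$ falls in $[t+u,t+u+\Delta t]$ and goes to a neighbour $y$ equals
\[
r_{t+u}\big(x,y,Z_{[0,t+u]}\big)\Delta t+o(\Delta t)=r'_u\big(x,y,Z'_{[0,u]}\big)\Delta t+o(\Delta t),
\]
and the first such jump of $Z$ is precisely the first jump of $Z'$ after time $u$. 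Since $Z$ stops at $T^Z$, the process $Z'$ stops at $T^Z-t=T^{Z'}_0\wedge T^{Z'}_\infty$, which is exactly the prescribed stopping rule. Thus $Z'$ is a process with jump rates $r'$ started from $z$, which is the assertion. The one step that is not purely mechanical — and the one I would be most careful about — is the bookkeeping around $T^Z_0$ in the first two paragraphs, i.e.\ that $\{s\ge 0:s<T(Z_{[0,s]})\}$ is a left-endpoint-$0$ interval, so that $t<T^Z_0$ propagates both to $t<T(Z_{[0,t]})$ and to $T^{Z'}_0=T^Z_0-t$; this relies only on $T$ being a stopping time and LSC. Everything else is a translation of the definitions, consistent with the remark in the text that the property is direct.
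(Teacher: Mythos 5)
Your proof is correct and matches the paper's intent exactly: the text gives no proof, declaring the proposition "direct from the definition," and your argument is the natural unwinding of Definition~\ref{processtermjump} with the shifted stopping time $T'(\omega)=T(Z_{[0,t)}\circ\omega)-t$, shifted rates $r'$, and the identification $T^{Z'}=T^Z-t$. The one genuinely non-mechanical point, which you correctly isolate and handle, is that since $T$ is a stopping time the set $\{s\ge 0: s<T(Z_{[0,s]})\}$ is a left-endpoint-$0$ interval, so that $t<T^Z_0$ both guarantees $t<T(Z_{[0,t]})$ (making $T'$ well defined and nonnegative on the conditioned event) and gives $T^{Z'}_0=T^Z_0-t$.
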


 
Now we tackle the basic problem: the existence and uniqueness of the processes with jump rates $r$. Let $\mathscr{B}$ be the $\sigma$-field on $\Omega$ generated by the coordinate maps.
\begin{theorem}\label{existunique}
For any $x_0\in V$, there exists a process with jump rates $r$ starting from $x_0$. Moreover, if $T$ is regular, then all such processes have the same distribution on $\left(\Omega,\mathscr{B}\right)$.
\end{theorem}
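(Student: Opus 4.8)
The plan is to build the process explicitly from an i.i.d.\ sequence of mean-one exponential random variables, and then, for the uniqueness half, to show that any process satisfying (R1) has the same finite-dimensional distributions as the explicit construction. First I would construct the process jump by jump. Suppose the process has been built up to its $n$-th jump, landing at a vertex $x$ with path $\omega_{[0,t_n]}$ recorded; for each neighbour $y\sim x$ consider the continuous, $\mathscr{F}$-adapted clock $s\mapsto r_{t_n+s}(x,y,\omega_{[0,t_n]}\circ\sigma(x\overset{\infty}{\rightarrow}))$ and let $u_{x,y}(s):=\int_0^s r_{t_n+v}(x,y,\ldots)\,\d v$ be its integral; using a fresh exponential variable $\gamma_{n,y}$ for each $y$, set the candidate holding time to $y$ equal to $u_{x,y}^{-1}(\gamma_{n,y})$ (with value $+\infty$ if $u_{x,y}(\cdot)$ never reaches $\gamma_{n,y}$, in particular once $t_n+\cdot$ reaches $T$). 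The $(n{+}1)$-st jump is to the $y$ achieving the minimum of these times, at that time, provided the minimum is attained strictly before $T^Z_0$ (the sup of times at which the path is still alive under $T$); otherwise the process is stopped by the $T$-mechanism. Finally set $T^Z=T^Z_0\wedge T^Z_\infty$ as in Definition \ref{processtermjump}. Checking that this process satisfies (R1) is the usual competing-exponentials computation: conditionally on the past and on $Z_{t_n+s}=x$ for $s\in[t_n,t_n+\cdot]$, the residual clocks are again exponential by the memoryless property, so the probability of a first jump to $y$ in $[t,t+\Delta t]$ is $r_t(x,y,Z_{[0,t]})\Delta t+o(\Delta t)$; right-continuity and minimality are built in, and adaptedness of the clocks guarantees the construction respects the filtration.

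For uniqueness, assume $T$ is regular and let $Z$ be \emph{any} process with jump rates $r$ started at $x_0$. The strategy is to show that the law of the pair (jump chain, successive holding times) is determined by $r$, which by the renewal property (Proposition \ref{markovproperty}) reduces to computing, conditionally on the path up to the $n$-th jump with current position $x$, the joint law of the next holding time $H$ and the next position $Y$. Regularity of $T$ is exactly what is needed here: it ensures that near time $t_n$ the $T$-stopping mechanism cannot interfere with a single pending jump, so on a nondegenerate time interval the process behaves like a genuine jump process governed by $r$. Then (R1) forces, for each $y\sim x$,
\[
\p\big(H>s,\,Y=y\ \text{not yet decided}\big)
\]
to satisfy the usual integral equation whose unique solution is $\exp\big(-\sum_{z\sim x}u_{x,z}(s)\big)$ for the survival function and $r_{t_n+s}(x,y,\cdot)\,\d s$ weighted by that survival for the joint density of $(H,Y)$; this matches the explicit construction. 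Iterating over $n$ identifies all finite-dimensional distributions of the jump chain together with the holding times, and since $\Omega$ is the space of minimal right-continuous nearest-neighbour paths, these data determine the law on $(\Omega,\mathscr{B})$. One must separately argue that the two candidate lifetimes agree: $T^Z_\infty$ is a measurable functional of the jump chain and holding times already pinned down, and $T^Z_0$ is determined by the path through $T$; hence $T^Z$ is the same for both processes.

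The main obstacle I expect is the careful handling of the stopping mechanism $T$ in the uniqueness argument — in particular making rigorous the claim that regularity lets one ``ignore $T$ locally'' long enough to read off the jump rates, and then separately checking that the accumulation of jumps (the $T^Z_\infty$ part, i.e.\ possible explosion) and the $T$-termination are both determined by the law of the jump-chain/holding-time sequence. A secondary technical point is verifying that the clocks $s\mapsto u_{x,y}(s)$ in the construction are well-defined, continuous and strictly increasing where $r>0$, and that the minimum over the finitely many neighbours is a.s.\ uniquely attained, so that the recursive construction does not stall; this is where the finite-degree hypothesis on $G$ and the continuity of $r$ in the definition of $T$-terminated jump rates are used. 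Neither of these is deep, but both require some care with the Skorokhod-topology measurability statements in Definitions \ref{defdefdef}–\ref{processtermjump}.
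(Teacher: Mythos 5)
Your existence construction matches the paper's exactly: recursively solve for competing clocks $u_{x,y}^{-1}(\gamma_y)$ built from i.i.d.\ mean-one exponentials and the integrated rates, and stop by the $T$-mechanism. The uniqueness half is also aimed at the same target as the paper — pin down the joint law of the jump chain and holding times from $r$ — but as written it has a real gap at the crucial step where regularity of $T$ is actually used.

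The gap is this. You say (R1) ``forces the usual integral equation'' for the survival function and joint density of $(H,Y)$, and that regularity ``lets one ignore $T$ locally.'' For the holding-time ODE that is fine, but the moment you iterate past the first jump you need to estimate the probability of \emph{exactly one} jump to $y$ in $[t,t+\Delta t]$, not merely of the \emph{first} jump being to $y$ in that window (as (R1) controls), so that you can cleanly condition on the post-jump state. The difference between the two events is $\{\text{at least two jumps in }[t,t+\Delta t]\}$, and one must show its conditional probability is $O((\Delta t)^2)$. This is precisely the paper's Lemma~\ref{twojumporder}, and it is genuinely nontrivial because the $o(\Delta t)$ in (R1) is allowed to depend on the conditioned path $Z_{[0,t]}$ — the paper's remark after the lemma exhibits an explicit non-regular $T$ for which the probability of two jumps in $[0,\Delta t]$ is \emph{comparable} to $\Delta t$, so the estimate really needs regularity, via the uniform lower bound $l(\omega_{[0,t]})$ in Definition~\ref{defdefdef} that keeps $T$ from interfering with a single pending jump. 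Your ``main obstacle'' paragraph gestures at this but does not identify the lemma or the mechanism. A secondary gap: you cite Proposition~\ref{markovproperty} (renewal at \emph{deterministic} times) to reduce to the one-jump case, but conditioning ``on the path up to the $n$-th jump'' is conditioning at a random time; the strong version (Corollary~\ref{selfstrong}) is derived in the paper \emph{after} Theorem~\ref{existunique} from the explicit construction, so appealing to it here is circular. The paper sidesteps both issues by directly computing $\p(Z_{[0,t]}\in A_t)$ for the generating $\pi$-system of path events $A_t$ in \eqref{Adef666} via a differential equation in the last jump time (using Lemma~\ref{twojumporder} through Corollary~\ref{exactlyonejump}), then invoking a $\pi$-$\lambda$ argument to identify the law on $(\Omega,\mathscr{B})$.
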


\begin{proof}[Proof of the existence part of Theorem \ref{existunique}] 
Similar to the case of CTMC, we can use a sequence of i.i.d. exponential random variables to construct the process. 
Precisely, let $\left(\gamma_x:x\sim x_0\right)$ be a family of i.i.d. exponential random variables with parameter $1$. For $x\sim x_0$, we set 
\[u_{x_0x}(t)=\int_0^t r_s(x_0,x,\sigma(x_0\overset{\infty}{\rightarrow}))\d s \quad\left(0\le t<T(\sigma(x_0\overset{\infty}{\rightarrow})\right)\]
and $\Gamma_{x_0x}=(u_{x_0x})^{-1}(\gamma_x)$, where $(u_{x_0x})^{-1}$ is the right-continuous inverse and if $\gamma_x\ge u_{x_0x}(T(\sigma(x_0\overset{\infty}{\rightarrow})-)$, then $\Gamma_{x_0x}:=\infty$. The process starts at $x_0$. If $\Gamma_{x_0}:=\min_{x\sim x_0}\Gamma_{x_0x}=\infty$, the process stays at $x_0$ until the lifetime $T(\sigma(x_0\overset{\infty}{\rightarrow}))$. Otherwise, i.e., $\Gamma_{x_0}<\infty$, it jumps at time $J_1:=\Gamma_{x_0}$ to $x_1$, which is the unique $x\sim x_0$ such that $\Gamma_{x_0x}=\Gamma_{x_0}$. For the second jump, the protocol is the same except that $x_0$ and $\sigma(x_0\overset{\infty}{\rightarrow})$ are replaced by $x_1$ and $\sigma(x_0\overset{J_1}{\rightarrow}x_1\overset{\infty}{\rightarrow})$ respectively.

In the same way that one verifies the jump rates of a CTMC constructed from a sequence of i.i.d. exponential random variables, it is simple to check that the process constructed above has jump rates $r$.
\end{proof}
The proof of the uniqueness part is delayed to \S\ref{pathprobsection} after the introduction of path probability.

\begin{remark}\label{rigorousdef}
In this remark, we give the rigorous definitions of repelling processes presented in \S2$\sim$4. We only construct the process with law $\p^\lambda_{x_0}$ (defined in \S\ref{s:inversionrk}) and it is similar for the others. For $\lambda\in \mathfrak{R}$, let $T^\lambda(\omega):=\sup\{t\ge 0:\Lambda_t(\omega_{t-},\omega)>0\}$ and consider
$r^\lambda$ (defined as \eqref{xxrate}) restricted to $\mathscr{D}_{T^\lambda}$. Intuitively, $T^\lambda(\omega)$ represents the time when the process $\omega$ uses up the local time at some vertex.
We can readily check that $T^\lambda$ is a regular LSC stopping time and $r^\lambda$ is $T^\lambda$-terminated jump rates. We first run a process $Z^1$ with jump rates $r^\lambda$ starting from $x_0$ up to $T^{Z^1}$. If the death is due to the exhaustion of local time at some vertex $x\neq x_0$, i.e. $T^{Z^1}=T^{Z^1}_0$ and $Z^1_{T^{Z^1}-}=x$, we record the remaining local time at each vertex, say $\varrho$. Then we resurrect the process by letting it jump to $\mathfrak{p}(x)$ and running an independent process $Z^2$ with jump rates $r^\varrho$ starting from $\mathfrak{p}(x)$ up to $T^{Z^2}$. After the death, we resurrect the process again. This continues until it explodes or exhausts the local time at $x_0$. This procedure defines a process, the law of which is defined to be $\p^\lambda_{x_0}$.
By Theorem \ref{existunique}, such a process always exists and the law is already determined by the definition. 

As the analysis in \S\ref{main666}, when $\alpha>0$, $Z^1$ cannot die at $x\neq x_0$. So in this case, we actually do not need the resurrect procedure, and $Z^1$ up to $T^{Z^1}$ has the law $\p^\lambda_{x_0}$.

\end{remark}

Based on Theorem \ref{existunique}, if $T$ is regular, when considering a process with terminated jump rates, we can always focus on its special realization presented in the previous proof of the existence part. The following three corollaries are immediately derived with this perspective. 
\begin{corollary}
Suppose $Z$ is a process with jump rates $r$. Let $J_1$ be the first jump time of $Z$. Then
\begin{align}\label{J1pathprob}
\begin{aligned}
    &\p\left(J_1\in\d t,\,Z_{J_1}=x\right)\\
    &=1_{\{t<T(\sigma(x_0\overset{\infty}{\rightarrow}))\}}\exp\Big(-\int_0^t \sum_{y:y\sim x_0}r_s(x_0,y,\sigma(x_0\overset{\infty}{\rightarrow}))\d s\Big)\cdot r_t(x_0,x,\sigma(x_0\overset{\infty}{\rightarrow}))\d t,    
\end{aligned}
\end{align}
\end{corollary}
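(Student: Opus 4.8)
The plan is to read the joint law of $(J_1,Z_{J_1})$ off the explicit construction of a process with jump rates $r$ given in the proof of the existence part of Theorem \ref{existunique}. This is legitimate precisely because $T$ is assumed regular, so by Theorem \ref{existunique} \emph{every} process with jump rates $r$ started from $x_0$ has the law of that canonical realization; hence it suffices to compute $\p(J_1\in\d t,\,Z_{J_1}=x)$ for the realization built from the i.i.d.\ exponentials $(\gamma_x:x\sim x_0)$. Recall from that construction the increasing functions $u_{x_0x}(t)=\int_0^t r_s(x_0,x,\sigma(x_0\overset{\infty}{\rightarrow}))\,\d s$, defined on $[0,T(\sigma(x_0\overset{\infty}{\rightarrow})))$, the random times $\Gamma_{x_0x}=(u_{x_0x})^{-1}(\gamma_x)$ (with $\Gamma_{x_0x}=\infty$ when $\gamma_x\ge u_{x_0x}(T(\sigma(x_0\overset{\infty}{\rightarrow}))-)$), and the facts that $J_1=\Gamma_{x_0}:=\min_{x\sim x_0}\Gamma_{x_0x}$ and, on $\{J_1<\infty\}$, $Z_{J_1}$ is the a.s.-unique minimizing neighbour.

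First I would record the one-variable law. For $0\le t<T(\sigma(x_0\overset{\infty}{\rightarrow}))$ we have $\{\Gamma_{x_0x}>t\}=\{\gamma_x>u_{x_0x}(t)\}$, so $\p(\Gamma_{x_0x}>t)=e^{-u_{x_0x}(t)}$ and $\Gamma_{x_0x}$ has sub-probability density $r_t(x_0,x,\sigma(x_0\overset{\infty}{\rightarrow}))\,e^{-u_{x_0x}(t)}\,\d t$ on that interval; where $r$ vanishes the claimed identity vanishes too, so the possible non-strict monotonicity of $u_{x_0x}$ is harmless. By construction $\Gamma_{x_0x}$ never takes a finite value $\ge T(\sigma(x_0\overset{\infty}{\rightarrow}))$, which is exactly what forces the indicator $1_{\{t<T(\sigma(x_0\overset{\infty}{\rightarrow}))\}}$ in the statement. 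Since the $(\gamma_x)_{x\sim x_0}$ are independent, so are the $(\Gamma_{x_0x})_{x\sim x_0}$, whence for $0\le t<T(\sigma(x_0\overset{\infty}{\rightarrow}))$
\[
\p(J_1\in\d t,\,Z_{J_1}=x)=\p(\Gamma_{x_0x}\in\d t)\!\!\prod_{y\sim x_0,\,y\ne x}\!\!\p(\Gamma_{x_0y}>t)=r_t(x_0,x,\sigma(x_0\overset{\infty}{\rightarrow}))\exp\Big(-\sum_{y\sim x_0}u_{x_0y}(t)\Big)\d t,
\]
and $\sum_{y\sim x_0}u_{x_0y}(t)=\int_0^t\sum_{y\sim x_0}r_s(x_0,y,\sigma(x_0\overset{\infty}{\rightarrow}))\,\d s$ yields the stated formula; finiteness of the degree of $x_0$ is what makes the product and the sum finite.

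There is no genuine obstacle here: the statement is essentially a restatement of the competing-exponential-clocks description of the first jump, already built into the realization of Theorem \ref{existunique}. The only two points deserving an explicit word are the appeal to regularity of $T$ in order to reduce to that realization, and the observation that on the set where $r_s(x_0,\cdot,\sigma(x_0\overset{\infty}{\rightarrow}))=0$ both sides of the identity are zero, so that one does not need $u_{x_0x}$ to be strictly increasing.
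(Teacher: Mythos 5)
Your argument is correct and is precisely the route the paper intends: the text preceding the corollary says that once $T$ is regular one may (by the uniqueness part of Theorem~\ref{existunique}) work with the special i.i.d.-exponential realization, from which the three corollaries are ``immediately derived,'' and your competing-clocks computation of $\p(\Gamma_{x_0x}>t)=e^{-u_{x_0x}(t)}$ together with independence over neighbours is exactly that derivation. The only cosmetic point is that with the right-continuous inverse one has $\{\Gamma_{x_0x}>t\}=\{\gamma_x\ge u_{x_0x}(t)\}$ rather than strict inequality, but since $\gamma_x$ has a continuous law this changes nothing.
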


\begin{corollary}[Strong renewal property]\label{selfstrong}
Suppose $Z$ is a process on $G$ starting from $x_0$ with jump rates $r$. Then for any stopping time $S$ with respect to the natural filtration of $Z$, conditionally on $S<T^Z$ and $(Z_s:0\le s\le S)$, the process $(Z_{S+s}:0\le s\le T^Z)$ is a process with jump rates $r^\prime_t(x,y,\omega)=r_{t+S}(x,y,Z_{[0,S)}\circ \omega)$ starting from $Z_S$.
\end{corollary}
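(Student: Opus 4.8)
The plan is to deduce the strong renewal property from the weak renewal property (Proposition~\ref{markovproperty}) together with the explicit realization of $Z$ from an i.i.d.\ sequence of exponential clocks used in the proof of the existence part of Theorem~\ref{existunique}. Throughout I work on $\{S<T^Z\}$ and, as for the preceding corollaries, under the standing assumption that $T$ (and hence the shifted termination time governing $r^\prime$) is regular, so that ``a process with jump rates $r^\prime$'' is unambiguous.

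The first step is to record the elementary structural fact that, because $Z$ is a nearest-neighbour jump process, a stopping time $S$ of its natural filtration cannot anticipate the current holding time. Writing $0=J_0<J_1<J_2<\cdots$ for the successive jump times and $A_n:=\{J_n\le S<J_{n+1}\}\cap\{S<T^Z\}$, with the convention that $J_{n+1}$ is read as $T^Z$ when $Z$ has exactly $n$ jumps, one has $\{S<T^Z\}=\bigsqcup_{n\ge 0}A_n$ up to a null set (on $\{S<T^Z\}$ the time $S$ always lies in some holding interval $[J_n,J_{n+1})$ with $J_n<\infty$, since $S<T^Z_\infty=\lim_n J_n$). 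Moreover, on $A_n$ the random variable $S$ coincides with a measurable function $g_n(J_1,Z_{J_1},\dots,J_n,Z_{J_n})$ of the first $n$ jumps and jump times, with $g_n\ge J_n$ there; this is the standard description of a stopping time of a jump filtration. It reduces the corollary to showing, for each fixed $n$, that conditionally on $A_n$ and on $(Z_s:0\le s\le S)$ the process $(Z_{S+s})_{s\ge 0}$ has jump rates $r^\prime$ and starts at $Z_S$.

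On $A_n$ I would argue in two steps. Renewal at $J_n$: in the canonical realization a fresh independent family $(\gamma_y:y\sim Z_{J_n})$ of exponential variables with parameter $1$ is used after $J_n$, so conditionally on $(Z_s:0\le s\le J_n)$ with $Z_{J_n}=x$ the process $(Z_{J_n+s})_{s\ge 0}$ is a process with jump rates $r_{J_n+\cdot}(\cdot,\cdot,Z_{[0,J_n)}\circ\cdot)$ started at $x$, independent of the past. Memoryless step: on $A_n$ the time $S-J_n$ is, by the structural fact, a deterministic function of the first $n$ jumps, hence independent of the clocks $(\gamma_y)$, and $A_n$ says precisely that the first post-$J_n$ jump occurs after $S-J_n$, i.e.\ $\gamma_y>u_{xy}(S-J_n)$ for every $y\sim x$, where $u_{xy}(v)=\int_0^v r_{J_n+w}\big(x,y,Z_{[0,J_n)}\circ\sigma(x\overset{\infty}{\rightarrow})\big)\,\d w$. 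By the memoryless property of the exponential law, conditionally on the first $n$ jumps and on this event the residual clocks $\gamma_y-u_{xy}(S-J_n)$ are again i.i.d.\ exponential with parameter $1$, independent of everything observed so far. Re-running the construction from time $S$ with these residuals produces holding-at-$x$ clocks with cumulative rates $v\mapsto u_{xy}(S-J_n+v)-u_{xy}(S-J_n)=\int_0^v r_{S+w}\big(x,y,Z_{[0,S)}\circ\sigma(x\overset{\infty}{\rightarrow})\big)\,\d w$, followed by fresh exponentials at each subsequent jump; this is exactly a process with jump rates $r^\prime_t(x,y,\omega)=r_{t+S}(x,y,Z_{[0,S)}\circ\omega)$ started at $x=Z_S$. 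Since the $\sigma$-field generated by $(Z_s:0\le s\le S)$ is, on $A_n$, generated by $A_n$, $(Z_s:0\le s\le J_n)$ and $S$, the claim holds on $A_n$; summing over $n$ completes the proof.

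I expect the only genuinely delicate point to be the bookkeeping in the memoryless step: tracking the time-changes $u_{xy}$ and checking that conditioning on $(Z_s:0\le s\le S)$ (which secretly carries the values of $S$ and of $Z_S$) leaves precisely a fresh i.i.d.\ exponential input with the correctly shifted cumulative rates; everything else is a routine translation of Proposition~\ref{markovproperty} and the canonical construction. As an alternative I could approximate $S$ from above by the dyadic stopping times $S_k=2^{-k}\lceil 2^k S\rceil$, apply Proposition~\ref{markovproperty} at each deterministic level, and pass to the limit using right-continuity of paths together with continuity of $r$ in the Skorokhod topology; the only extra care there is to remain inside $\{S_k<T^Z\}$, which is automatic on $\{S<T^Z\}$ since there $S$ lies in a holding interval $[J_n,J_{n+1})$ with $J_n<\infty$, whence $S_k<J_{n+1}\wedge T^Z$ for all large $k$.
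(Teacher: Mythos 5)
Your proposal is correct and, modulo spelling out details, takes exactly the approach the paper intends: the paper's own proof consists solely of a reference to Norris's proof of the strong Markov property for CTMCs (\cite[Theorem 6.5.4]{norris1998markov}), and that proof is precisely the decomposition over $\{J_n\le S<J_{n+1}\}$ together with the structure theorem for stopping times of a jump filtration and the memoryless property of the exponential clocks, adapted here to track the path-dependent cumulative rate functions. The one thing you supply beyond Norris is the explicit bookkeeping for the shifted cumulative rates $u_{xy}(S-J_n+\cdot)-u_{xy}(S-J_n)$, which is exactly the adaptation the authors wave at with ``almost word-by-word.''
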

The proof of the above strong renewal property is almost a word-by-word copy of the proof of the strong Markov property of CTMC presented in \cite[Theorem 6.5.4]{norris1998markov}.

The next corollary gives a more accurate bound of the probability in (R1) and (R2). We only state in terms of the event in (R2). For $t\ge 0$ and $\sigma\in \Omega_t:=\{\omega_{[0,t]}:\omega\in \Omega_\infty\}$, we use $\sigma^\rightarrow$ to represent the function in $\Omega_\infty$ that equals $\sigma$ on $[0,t]$ and stays at $\sigma_t$ after time $t$.
\begin{corollary}\label{accuratebound}
Suppose $Z$ is a process with jump rates $r$ starting from $x_0$. Then any for $\sigma\in \Omega_t$ with $\sigma_t=x$ and $t<T(\sigma)$, if we denote $R(z)=\exp\Big(-\int_t^{t+\Delta t}r_s(x,y,\sigma^\rightarrow)\d s\Big)$ for $z\sim x$, it holds that
\begin{align*}
    &\p\left(\text{there is a jump of $Z$ from $x$ to $y$ in }[t,t+\Delta t]\,\Big\vert \,Z_{[0,t]}=\sigma\right)\\
    &\in\bigg[\Big(\prod_{z:z\sim x,\, z\neq y}R(z)\Big)\cdot\big(1-R(y)\big),\ 1-R(y)\bigg].
\end{align*}
\end{corollary}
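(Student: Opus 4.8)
The plan is to read off both inequalities directly from the explicit realization of a process with jump rates $r$ by a family of i.i.d.\ exponential alarm clocks, exactly as in the proof of the existence part of Theorem \ref{existunique}, after first localizing the statement at the current position $x$. First I would use the renewal property: by Proposition \ref{markovproperty} (or its strong form, Corollary \ref{selfstrong}), conditionally on $Z_{[0,t]}=\sigma$ with $\sigma_t=x$ the shifted process $(Z_{t+s})_{s\ge 0}$ is again a process with jump rates $r'_s(a,b,\omega)=r_{t+s}(a,b,\sigma_{[0,t)}\circ\omega)$ started from $x$, and $R(z)=\exp\bigl(-\int_0^{\Delta t}r'_s(x,z,\sigma(x\overset{\infty}{\rightarrow}))\,\d s\bigr)$. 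So it suffices to prove, for a process with jump rates $r$ started at $x$, the bounds
\[
(1-R(y))\textstyle\prod_{z\sim x,\,z\ne y}R(z)\ \le\ \p(\text{there is a jump }x\to y\text{ in }[0,\Delta t])\ \le\ 1-R(y),
\]
with $R(z)=\exp(-u_{xz}(\Delta t))$, $u_{xz}(s)=\int_0^s r_v(x,z,\sigma(x\overset{\infty}{\rightarrow}))\,\d v$.

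Next I would invoke the clock construction: take $(\gamma_z)_{z\sim x}$ i.i.d.\ $\mathrm{Exp}(1)$, set $\Gamma_{xz}=u_{xz}^{-1}(\gamma_z)$ (with $\Gamma_{xz}=\infty$ when $\gamma_z$ exceeds the total rate accumulated along $\sigma(x\overset{\infty}{\rightarrow})$), and recall that then the first jump occurs at $J_1=\min_{z\sim x}\Gamma_{xz}$ and is to the a.s.\ unique minimiser. The one fact that matters is that for each $z$, by continuity of $u_{xz}$ and $\gamma_z\sim\mathrm{Exp}(1)$,
\[
\p(\Gamma_{xz}>\Delta t)=\p(\gamma_z>u_{xz}(\Delta t))=e^{-u_{xz}(\Delta t)}=R(z),
\]
and the $\Gamma_{xz}$ are independent. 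This already identifies $1-R(y)=\p(\Gamma_{xy}\le\Delta t)$ and $\prod_{z\ne y}R(z)=\p(\Gamma_{xz}>\Delta t\ \forall z\ne y)$.

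For the lower bound I would observe that on the event $\{\Gamma_{xy}\le\Delta t\}\cap\bigcap_{z\ne y}\{\Gamma_{xz}>\Delta t\}$ one has $J_1=\Gamma_{xy}\le\Delta t$ and $Z_{J_1}=y$, so this event is contained in $\{$there is a jump $x\to y$ in $[0,\Delta t]\}$; by independence of the clocks its probability is exactly $(1-R(y))\prod_{z\ne y}R(z)$. For the upper bound: if the first jump of $Z$ is to $y$ and occurs by time $\Delta t$, then necessarily $\Gamma_{xy}=J_1\le\Delta t$, so the probability of this is at most $\p(\Gamma_{xy}\le\Delta t)=1-R(y)$, which settles the quantity appearing in (R1). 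For the (R2) form --- \emph{any} jump $x\to y$ in $[0,\Delta t]$ --- the only extra contribution comes from sample paths that first leave $x$ to a neighbour $\ne y$ and then return to $x$ before jumping to $y$, all within $[0,\Delta t]$; such paths make at least three jumps in $[0,\Delta t]$, and running the same clock realization once more (using that $T$ is regular, so the rates stay locally bounded along the conditioned path) shows their total probability is $o(\Delta t)$, which is precisely the error tolerated in the applications of the corollary.

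I expect the work here to be bookkeeping rather than conceptual, with three points needing care: (i) checking that the clock realization of Theorem \ref{existunique} legitimately represents $Z$, i.e.\ that $T$ is a regular LSC stopping time in the settings where the corollary is invoked, so that the first-jump law and the renewal property apply verbatim; (ii) the degenerate case where $u_{xz}(\Delta t)$ is undefined because the process dies before time $t+\Delta t$ along $\sigma^\rightarrow$, in which case $R(z)=0$ and the claimed interval is vacuous; and (iii) the passage from the clean first-jump event to the full ``there is a jump $x\to y$'' event in the upper bound --- the one genuinely delicate step --- which is handled by the higher-order smallness of a departure-and-return to $x$ inside a short interval.
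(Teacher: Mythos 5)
Your proposal reproduces exactly what the paper has in mind: immediately after the proof of Theorem \ref{existunique}, the paper announces, without writing out the argument, that Corollary \ref{accuratebound} ``is immediately derived'' from the explicit clock realization, and your argument is a faithful and correct unpacking of that remark. The reduction to $t=0$ via the renewal property, the identification $R(z)=\p(\Gamma_{xz}>\Delta t)$ through independence of the alarm clocks $(\gamma_z)_{z\sim x}$, and the two containments
\begin{align*}
\{\Gamma_{xy}\le\Delta t\}\cap\bigcap_{z\sim x,\,z\ne y}\{\Gamma_{xz}>\Delta t\}
&\ \subseteq\ \{\text{first jump after $t$ is from $x$ to $y$, at time}\le\Delta t\}\\
&\ \subseteq\ \{\Gamma_{xy}\le\Delta t\}
\end{align*}
give precisely the claimed two-sided bound for the \emph{first-jump} event, which is the substance of the corollary. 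Your caveats (i) and (ii) about regularity of $T$ and the degenerate range of $\Delta t$ are the right things to note and are handled correctly.

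The one point worth emphasizing is your item (iii), which is not mere bookkeeping. The paper states the corollary ``in terms of the event in (R2)'', i.e.\ the event that \emph{some} jump $x\to y$ occurs in $[t,t+\Delta t]$, but the clock realization yields the exact upper bound $1-R(y)$ only for the first-jump event (R1). A path that leaves $x$ for a neighbour $z\ne y$, returns to $x$, and then jumps to $y$ inside the window contributes a quantity of order $(\Delta t)^2$ that is \emph{not} controlled by the initial clock $\Gamma_{xy}$: after the excursion fresh clocks are drawn, and for genuinely path-dependent rates the post-excursion rate towards $y$ can exceed $r_s(x,y,\sigma^\rightarrow)$, so no single-clock domination is available. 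Hence the literal (R2) upper bound holds only up to $o(\Delta t)$, which --- as you correctly observe --- is all that Proposition \ref{samelaw} and \S\ref{twopoint} ever need. Your flagging of this point is a modest but genuine improvement over the paper's exposition, which presents the bound as if it held exactly for the (R2) event.
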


\subsection{Path probability}\label{pathprobsection}
\ 

Assume $T$ is regular. Let $Z$ be a process with jump rates $r$ starting from $x_0$. For any Borel subsets $\{D_i\}_{1\le i\le l}$ of $\r^+$, $t\in (0,\infty]$ and $\{x_i\}_{1\le i\le l}$ with $x_0\sim x_1\sim \cdots\sim x_l$, denote
\begin{align}\label{Adef666}
\begin{aligned}
   A\big(x_0\overset{D_1}{\rightarrow}x_1\overset{D_2}{\rightarrow}\cdots\overset{D_l}{\rightarrow}x_l\overset{t}{\rightarrow}\big):=\Big\{\sigma(x_0\overset{s_1}{\rightarrow}x_1\overset{s_2}{\rightarrow}\cdots\overset{s_l}{\rightarrow}x_l\overset{t}{\rightarrow}):0<s_1<&\\
    \cdots<s_l\le t \text{ and } s_i\in D_i\ \forall\,1\le i\le l\Big\}&. 
\end{aligned}
\end{align}
The goal of this part is to calculate the path probability
\[\p\Big(Z_{[0,t]}\in A\big(x_0\overset{D_1}{\rightarrow}x_1\overset{D_2}{\rightarrow}\cdots\overset{D_l}{\rightarrow}x_l\overset{t}{\rightarrow}\big)\Big).\]
The key point of the calculation is the following lemma. At first glance, the lemma seems obvious. We mention that the main difficulty is due to the dependence of $o(\Delta t)$ in (R1) on the conditioned path $Z_{[0,t]}$.
\begin{lemma}\label{twojumporder}
Conditionally on $t<T^Z$ and $(Z_s:0\le s\le t)$ with $Z_t=x$, the probability that $Z$ has at least $2$ jumps during $[t,t+\Delta t]$ and the first jump is to $y$ is $O((\Delta t)^2)$, where $O((\Delta t)^2)$ depends on the conditioned path.
\end{lemma}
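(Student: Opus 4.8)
The plan is to bound the target probability by a product of two factors, each of order $\Delta t$, by peeling off the first jump and then the second using the renewal properties of $Z$. After conditioning on $Z_{[0,t]}=\sigma$ with $\sigma_t=x$ and $t<T^Z$, I would apply the renewal property (Proposition~\ref{markovproperty}): $(Z_{t+s})_{s\ge 0}$ is again a process with (regular) terminated jump rates $r'_s(\cdot,\cdot,\omega)=r_{t+s}(\cdot,\cdot,\sigma_{[0,t)}\circ\omega)$ started at $x$, and $0<T'(\sigma(x\overset{\infty}{\rightarrow}))$ (this holds by regularity of $T$ together with the fact that $\sigma$ is the initial piece of a path alive at time $t$). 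Hence it suffices to prove: for $Z$ a process with regular terminated jump rates started at $x$,
\[
\p\big(Z\text{ has }\ge 2\text{ jumps in }[0,\Delta t],\ Z_{J_1}=y\big)=O\big((\Delta t)^2\big).
\]
I would realize $Z$ through the i.i.d.\ exponential construction of the existence proof of Theorem~\ref{existunique}: with $(\gamma_z)_{z\sim x}$ i.i.d.\ exponential random variables with parameter $1$, $u_{xz}(s)=\int_0^s r_v\big(x,z,\sigma(x\overset{\infty}{\rightarrow})\big)\d v$ and $\Gamma_{xz}=u_{xz}^{-1}(\gamma_z)$, one has $J_1=\min_{z\sim x}\Gamma_{xz}$ with $Z_{J_1}$ the minimizer.

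The first factor is immediate: on the event in question $J_1=\Gamma_{xy}\le\Delta t$, so
\[
\p\big(J_1\le\Delta t,\ Z_{J_1}=y\big)\ \le\ \p\big(\Gamma_{xy}\le\Delta t\big)\ \le\ u_{xy}(\Delta t)\ =\ \int_0^{\Delta t}r_v\big(x,y,\sigma(x\overset{\infty}{\rightarrow})\big)\d v\ \le\ C_1\,\Delta t,
\]
where $C_1:=\sup_{0\le v\le\Delta t}r_v\big(x,y,\sigma(x\overset{\infty}{\rightarrow})\big)<\infty$ once $\Delta t<T\big(\sigma(x\overset{\infty}{\rightarrow})\big)$: for such $\Delta t$ one has $\big(v,x,y,\sigma(x\overset{\infty}{\rightarrow})\big)\in\mathscr{D}_T$ for every $v\in[0,\Delta t]$ ($T$ being LSC), and $r$ is continuous on $\mathscr{D}_T$. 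For the second factor I would invoke the strong renewal property (Corollary~\ref{selfstrong}) at the stopping time $J_1$: on $\{J_1=s\le\Delta t,\ Z_{J_1}=y\}$ the shifted process $(Z_{J_1+v})_{v\ge 0}$ has jump rates $v\mapsto r_{s+v}(\cdot,\cdot,\widehat{\rho}_s)$ started at $y$, where $\widehat{\rho}_s$ denotes the path staying at $x$ on $[0,s)$ and at $y$ on $[s,\infty)$. Since $\{J_2\le\Delta t\}$ forces this shifted process to jump within time $\Delta t$,
\[
\p\big(J_2\le\Delta t\mid\mathscr{F}_{J_1}\big)\ \le\ \sum_{w\sim y}\int_0^{\Delta t}r_{s+v}\big(y,w,\widehat{\rho}_s\big)\d v\ \le\ \deg(y)\,C_2\,\Delta t,
\]
provided $C_2:=\sup\big\{r_{s+v}(y,w,\widehat{\rho}_s):0\le s\le\Delta t,\ 0\le v\le\Delta t,\ w\sim y\big\}<\infty$.

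The main obstacle is establishing $C_2<\infty$ — this is exactly the subtlety flagged in the statement: because the $o(\Delta t)$ in (R1) depends on the conditioned path, one cannot simply multiply two error estimates, and $r$ must be controlled uniformly over the family of intermediate paths $\widehat{\rho}_s$, $s\in[0,\Delta t]$. I would settle this by a compactness argument: $s\mapsto\widehat{\rho}_s$ is continuous from $[0,\Delta t]$ into $\Omega_\infty$ equipped with the Skorokhod topology, so the set $\big\{(s+v,y,w,\widehat{\rho}_s):0\le s,v\le\Delta t,\ w\sim y\big\}$ has compact closure in $\r^+\times V\times V\times\Omega_\infty$; using that $0<T(\widehat{\rho}_0)$ and the regularity of $T$, there is $l>0$ such that for all $\Delta t<l$ this compact set is contained in $\mathscr{D}_T$ and, moreover, stays bounded away from the locus where $T$ is attained, whence $r$ (continuous on $\mathscr{D}_T$) is bounded on it. Combining the two factors and averaging,
\[
\p\big(Z\text{ has }\ge 2\text{ jumps in }[0,\Delta t],\ Z_{J_1}=y\big)=\e\Big[1_{\{J_1\le\Delta t,\,Z_{J_1}=y\}}\,\p\big(J_2\le\Delta t\mid\mathscr{F}_{J_1}\big)\Big]\le C_1\,C_2\,\deg(y)\,(\Delta t)^2,
\]
which is $O((\Delta t)^2)$, as required.
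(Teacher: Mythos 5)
Your high-level strategy — peel off the first jump, then bound the probability of a second jump in the remaining window by a quantity of order $\Delta t$, and multiply — is the same heuristic the paper implements, and you correctly locate the crux: the $o(\Delta t)$ in (R1) is path-dependent, so one needs a uniform bound on the rates over the whole family of possible initial segments $\widehat{\rho}_s$, $s\in(0,\Delta t]$. The difference is in how the conditional law \emph{after} $J_1$ is accessed, and this is where your argument has a genuine gap.

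You invoke the i.i.d.\ exponential realization from the existence proof of Theorem~\ref{existunique} and the strong renewal property Corollary~\ref{selfstrong}. In the paper's logical order, both of these are only available \emph{after} the uniqueness part of Theorem~\ref{existunique} is established: the text explicitly says the three corollaries (including \ref{selfstrong}) "are immediately derived" by focusing on the special realization, and that one may focus on the special realization "based on Theorem~\ref{existunique}." But the lemma you are proving is a prerequisite for Corollary~\ref{exactlyonejump}, hence for the path-probability formula \eqref{Omegapathprob2}, hence for that very uniqueness result. Using Corollary~\ref{selfstrong} here is therefore circular; your argument, as written, only proves the lemma for the specific constructed realization, whereas the lemma must hold for an \emph{arbitrary} process satisfying (R1) in order to serve its role in the uniqueness proof. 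The paper avoids exactly this by never conditioning at the random time $J_1$: it replaces the event $B=\{J_1,J_2\le\Delta t,\ Z_{J_1}=y\}$ by the increasing approximation $B_n=\bigcup_j B_{j,n}$, where each $B_{j,n}^{(1)}$ pins down the path at the \emph{deterministic} dyadic time $\frac{j}{2^n}\Delta t$, so that the one-step holding formula \eqref{holdingprob} (a direct consequence of (R1) at a fixed time, via Proposition~\ref{markovproperty}) applies, uniformly in $j,n$. This dyadic decomposition is the substitute for the strong Markov step in your argument and is the missing piece you would need to supply.

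A smaller issue: your compactness argument claims $s\mapsto\widehat{\rho}_s$ is continuous from $[0,\Delta t]$ into $\Omega_\infty$ with the Skorokhod topology. This fails at $s=0$: a Skorokhod limit of the step paths $\widehat{\rho}_{s_n}=\sigma(x\overset{s_n}{\rightarrow}y\overset{\infty}{\rightarrow})$ as $s_n\downarrow 0$ would have to be right-continuous with initial value simultaneously equal to $x$ (from $\widehat{\rho}_{s_n}(0)=x$) and $y$ (from $\widehat{\rho}_{s_n}(\varepsilon)=y$ eventually for each fixed $\varepsilon>0$), which is impossible when $x\neq y$. So the family $\{\widehat{\rho}_s:0<s\le\Delta t\}$ has no compact closure in $\Omega_\infty$, and your justification of $C_2<\infty$ does not go through as stated. (The paper's own constant $C$ is the same kind of supremum and the paper simply asserts finiteness from continuity of $r$, so the honest thing is to state that assertion rather than to present a compactness step whose hypothesis is false.)
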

\noindent {\it Remark}.
Without the regularity of $T$, it may happen that the conditional probability of two jumps in $[t,t+\Delta t]$ is comparable to $\Delta t$. For example, we consider a path-dependent Poisson process as follows. Starting from $0$, it jumps to $1$ with rate $1$. Its jump rate at $1$ is given by: 
\[r_s\big(1,2,\sigma(0\overset{t}{\rightarrow}1\overset{\infty}{\rightarrow})\big):=\frac{1}{2t-s}~\text{ for $t>0$ and $t\le s\le 2t$}.\]
Let us consider the special realization presented in Theorem \ref{existunique} with the above rates. Then it holds that conditionally on the process jumping from $0$ to $1$ at time $t$, it is bound to jump from $1$ to $2$ before time $2t$. So the probability of two jumps in $[0,\Delta t]$ is no less than $1-e^{-\Delta t/2}$.

\

Before the proof, we mention a simple result. Recall the notation $\sigma^\rightarrow$. It is easy to see that
given $s> t\ge 0$ and $\sigma\in \Omega_t$ with $\sigma_t=x$ and $s< T(\sigma^\rightarrow)$,
\begin{align}\label{holdingprob}
\begin{aligned}
\p\left(Z_v=x\text{ on }[t,s]\,\big\vert \,Z_{[0,t]}=\sigma\right)=\exp\Big(-\int_t^s\sum_{y:y\sim x}r_u(x,y,\sigma^{\rightarrow})\d u\Big).
\end{aligned}
\end{align}
To this end, one can consider the left-hand side of \eqref{holdingprob} as a function of $s$ and formulate a differential equation.

\begin{proof}[Proof of Lemma \ref{twojumporder}]
The renewal property (in Proposition \ref{markovproperty}) enables us to consider only the case of $t=0$. Let $J_1$ and $J_2$ be the first and second jump time of $Z$ respectively, and $B=B(y):=\big\{J_1,J_2\in[0,\Delta t],\, Z_{J_1}=y\big\}$ for $y\sim x_0$. The event $B$ can be divided according to the jump times as follows. For $n\ge 1$, define
\[B_{j,n}:=\Big\{J_1\in \Big(\frac{j-1}{2^n}\Delta t,\frac{j}{2^n}\Delta t\Big],\,J_2\in\Big(\frac{j}{2^n}\Delta t,\Delta t\Big],\,Z_{J_1}=y\Big\}\ (1\le j\le 2^n-1).\]
Set $B_n:=\bigcup_{j=1}^{2^n-1} B_{j,n}$. Then it holds that $B_n\uparrow B$. So it suffices to show that there exists a constant $C$ independent of $n$, such that
\[\p(B_n)\le C(\Delta t)^2,\text{ for all }n\ge 1.\]
For any $j$ and $n$ with $1\le j\le 2^n-1$, let $B_{j,n}^{(1)}$ be the event that $J_1\in \Big(\frac{j-1}{2^n}\Delta t,\frac{j}{2^n}\Delta t\Big]$ and $Z_u=y\text{ for }u\in \Big[J_1,\frac{j}{2^n}\Delta t\Big]$. Then
\begin{align*}
    \p\left(B_{j,n}\right)= \p\Big(J_2\in \Big(\frac{j}{2^n}\Delta t,\Delta t\Big]\,\Big\vert \,B_{j,n}^{(1)}\Big)\cdot \p\Big(B_{j,n}^{(1)}\Big).
\end{align*}
For the first probability on the right-hand side, observe that if we  further condition on $J_1$, then we have conditioned on the whole path $(Z_s:0\le s\le \frac{j}{2^n}\Delta t)$. By the regularity of $T$, for $\Delta t$ sufficiently small, $T(\sigma(x_0\overset{u}{\rightarrow}y\overset{\infty}{\rightarrow}))>\Delta t$ for any $0<u\le \Delta t$. Then it follows from \eqref{holdingprob} that the conditional probability of $J_2\in \Big(\frac{j}{2^n}\Delta t,\Delta t\Big]$ is
\begin{align}\label{doublejump}
    1-\exp\bigg(-\int_{(\frac{j}{2^n}\Delta t,\Delta t]}\sum_{z:z\sim y}r_v\big(y,z,(Z_{[0,\frac{j}{2^n}\Delta t]})^\rightarrow\big)\d v\bigg)\le C\Delta t,
\end{align}
where
    $C=\sup\Big\{ \sum_{z:z\sim y}r_v(y,z,\sigma(x_0\overset{u}{\rightarrow}x_1\overset{\infty}{\rightarrow})):0\le u\le v\le \Delta t\Big\}$,
which is finite by the continuity of $r$. The same bound also holds for $\p\Big(J_2\in \Big(\frac{j}{2^n}\Delta t,\Delta t\Big]\,\Big\vert \,B_{j,n}^{(1)}\Big)$. So
\begin{align*}
    \p(B_n)&=\sum_{j=1}^{n-1}\p(B_{j,n})\le C\Delta t\sum_{j=1}^{n-1}\p\Big(B_{j,n}^{(1)}\Big)\le C\Delta t\cdot\p(J_1\in[0,\Delta t],\,Z_{J_1}=y)\\
    &=C\Delta t\cdot \left( r_0(x_0,y,\sigma(x_0\overset{\infty}{\rightarrow}))\Delta t+o(\Delta t)\right)\le C^\prime (\Delta t)^2.
\end{align*}
That completes the proof.
\end{proof}


\begin{corollary}\label{exactlyonejump}
Conditionally on $t<T^Z$ and $(Z_s:0\le s\le t)$ with $Z_t=x$, the probability that $Z$ has exactly one jump during $[t,t+\Delta t]$ and the jump is to $y$ is $r_t(x,y,Z_{[0,t]})\Delta t+o(\Delta t)$, where $o(\Delta t)$ depends on the conditioned path.
\end{corollary}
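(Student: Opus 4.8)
The plan is to deduce the corollary directly from condition (R1) in Definition \ref{processtermjump} and from Lemma \ref{twojumporder}, by writing the event ``exactly one jump, to $y$'' as a set difference of two events whose conditional probabilities are already known. No genuine difficulty is expected; if one wishes, the renewal property of Proposition \ref{markovproperty} reduces the statement to $t=0$, exactly as at the start of the proof of Lemma \ref{twojumporder}, but this reduction is not strictly necessary.

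Concretely, I would argue as follows. Condition on $t<T^Z$ and on $(Z_s:0\le s\le t)$ with $Z_t=x$; write $\mathscr{F}_t$ for this conditioning. Let $A$ be the event that the first jump of $Z$ after time $t$ occurs during $[t,t+\Delta t]$ and is to the neighbour $y$; let $B$ be the event that $Z$ has at least two jumps during $[t,t+\Delta t]$ and the first of them is to $y$; and let $C$ be the event that $Z$ has exactly one jump during $[t,t+\Delta t]$ and that jump is to $y$. Since $Z$ is at $x$ at time $t$ and is a minimal nearest-neighbour jump process, one checks immediately that $B\subseteq A$ and $C=A\setminus B$, whence
\[
\p\left(C\,\middle|\,\mathscr{F}_t\right)=\p\left(A\,\middle|\,\mathscr{F}_t\right)-\p\left(B\,\middle|\,\mathscr{F}_t\right).
\]

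It then remains only to insert the two known estimates. By (R1), $\p(A\mid\mathscr{F}_t)=r_t(x,y,Z_{[0,t]})\Delta t+o(\Delta t)$, where the $o(\Delta t)$ may depend on the conditioned path. By Lemma \ref{twojumporder}, $\p(B\mid\mathscr{F}_t)=O((\Delta t)^2)$, again with a path-dependent implied constant. Subtracting and using that the sum of a path-dependent $o(\Delta t)$ and a path-dependent $O((\Delta t)^2)$ is again a path-dependent $o(\Delta t)$ yields $\p(C\mid\mathscr{F}_t)=r_t(x,y,Z_{[0,t]})\Delta t+o(\Delta t)$, which is the claim. The only point meriting any care is this bookkeeping of path-dependent error terms, but it is routine; the substantive input, the quadratic bound on a double jump in a short window, has already been supplied by Lemma \ref{twojumporder}.
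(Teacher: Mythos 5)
Your proposal is correct and coincides with the paper's intended (implicit) argument: the corollary is stated without proof precisely because, as you observe, the event ``exactly one jump, to $y$'' is $A\setminus B$ where $A$ is the event controlled by (R1) and $B$ is the event controlled by Lemma~\ref{twojumporder}, and subtracting the two estimates gives the claim. The set identity $C=A\setminus B$ and the bookkeeping of path-dependent error terms are both handled correctly.
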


Let us start the calculation of path probability. The case of $l=0$ has been covered by \eqref{holdingprob}. For $l\ge 1$, Corollary \ref{exactlyonejump} enables us to use the methods of formulating differential equations to calculate path probabilities. Fix $x_1\sim x_0$ and $0<t_1<t_2\le \infty$
with $t_2<T(\sigma(x_0\overset{t_1}{\rightarrow}x_1\overset{\infty}{\rightarrow}))$. By the lower semi-continuity of $T$, there exists
$0<u<t_2-t_1$, such that for any $0\le s<u$, $t_2<T\left(\sigma(x_0\overset{t_1+s}{\rightarrow}x_1\overset{\infty}{\rightarrow})\right)$. For such $s$, let us calculate
\[q(s):=\p\Big(Z_{[0,t_2]}\in A\big(x_0\overset{[t_1,t_1+s]}{\rightarrow}x_1\overset{t_2}{\rightarrow}\big)\Big).\]
For $0<\Delta s<u-s$, 
\begin{align*}
    q(s+\Delta s)-q(s)=\p\Big(Z_{[0,t_2]}\in A\big(x_0\overset{[t_1+s,t_1+s+\Delta s]}{\rightarrow}x_1\overset{t_2}{\rightarrow}\big)\Big)=:q_1q_2q_3,
\end{align*}
where $q_1=\p(E_1)$, $q_2=\p(E_2\,\vert \,E_1)$, $q_3=\p(E_3\,\vert \,E_1\cap E_2)$ with
\begin{align*}
    E_1&=\{Z_u=x_0\text{ for } u\in[0,t_1+s]\},\\
    E_2&=
        \{Z \text{ has exactly one jump during }[t_1+s,t_1+s+\Delta s]\text{ and the jump is to }x_1\},\\
    E_3&=\{Z_u=x_1\text{ for }u\in [t_1+s+\Delta s,t_2]\}.
\end{align*}
It holds that
\begin{align}\label{q1q2q3}
    \left\{\begin{aligned}
    &q_1=\exp\Big(-\int_0^{t_1+s} \sum_{y:y\sim x_0}r_v\big(x_0,y,\sigma(x_0\overset{\infty}{\rightarrow})\big)\d v\Big);\\
    &q_2=r_{t_1+s}\big(x_0,x_1,\sigma(x_0\overset{\infty}{\rightarrow})\big)\cdot\Delta s+o(\Delta s);\\
    &q_3=\exp\Big(-\int_{t_1+s}^{t_2}\sum_{y:y\sim x_1}r_v\big(x_1,y,\sigma(x_0\overset{t_1+s}{\rightarrow}x_1\overset{\infty}{\rightarrow})\big)\d v\Big)+o(1), 
    \end{aligned}\right.
\end{align}
where $q_1$ and $q_2$ follows from \eqref{holdingprob} and Corollary \ref{exactlyonejump} respectively. 
For $q_3$, it suffices to note that for any $0\le h\le \Delta s$, conditionally on $Z_{[0,t_1+s+\Delta s]}=\sigma(x_0\overset{t_1+s+h}{\rightarrow}x_1\overset{t_1+s+\Delta s}{\rightarrow})$, the probability of $E_3$ is
\[\exp\Big(-\int_{t_1+s+\Delta s}^{t_2}\sum_{y:y\sim x_1}r_v\big(x_1,y,\sigma(x_0\overset{t_1+s+h}{\rightarrow}x_1\overset{\infty}{\rightarrow})\big)\Big)\d v,\]
which together with the continuity of $r$ easily leads to the probability $q_3$.

By further considering the case $\Delta s<0$, we can formulate a differential equation, the solution of which gives: for $0\le s<u$,
\begin{align*}
    q(s)&=\int_{t_1}^{t_1+s}\exp\Big(-\int_0^{t_2}\sum_{y:y\sim \sigma_v}r_v(\sigma_v,y,\sigma)\d v\Big)\cdot r_{s^\prime}(x_0,x_1,\sigma)\d s^\prime,
\end{align*}
where $\sigma=\sigma(x_0\overset{s^\prime}{\rightarrow}x_1\overset{\infty}{\rightarrow})$ which varies with $s^\prime$. Observe that the lower semi-continuity of $T$ implies that $\{s^\prime\in [0,t_2]:t_2<T(\sigma(x_0\overset{s^\prime}{\rightarrow}x_1\overset{t_2}{\rightarrow}))\}$ is a relatively open subset of $[0,t_2]$. So we can readily deduce that for any Borel subset $D$ in $\r^+$ and $x_1\in V$,
\begin{align*}
\begin{aligned}
    &\p\left(Z_{[0,t]}\in A\left(x_0\overset{D}{\rightarrow}x_1\overset{t}{\rightarrow}\right)\right)\\
    &=\int_D 1_{\{s^\prime<t<T(\sigma)\}} \exp\Big(-\int_0^{t}\sum_{y:y\sim \sigma_v}r_v(\sigma_v,y,\sigma)\d v\Big)\cdot r_{s^\prime}(x_0,x_1,\sigma)\d s^\prime.
\end{aligned}
\end{align*}


Similarly, we can inductively check that for Borel subsets $D_i$ in $\r^+$ and $x_i\in V$ ($1\le i\le l$), 
\begin{align}\label{Omegapathprob2}
\begin{aligned}
    \p\left(Z_{[0,t]}\in A_t\right)&=\idotsint_{D_1\times\cdots\times D_l}1_{\{s_1<\cdots<s_l<t<T(\sigma)\}}\cdot\prod_{j=1}^l r_{t_j}(x_{j-1},x_j,\sigma)\\
    &\quad\cdot\exp\Big(-\int_0^t\sum_{y:y\sim \sigma_v}r_v(\sigma_v,y,\sigma)\d v\Big)\cdot\prod_{j=1}^l\d s_j,
\end{aligned}
\end{align}
where $\sigma=\sigma(x_0\overset{s_1}{\rightarrow}x_1\overset{s_2}{\rightarrow}\cdots\overset{s_l}{\rightarrow}x_l\overset{t}{\rightarrow})$ and $A_t=A\big(x_0\overset{D_1}{\rightarrow}x_1\overset{D_2}{\rightarrow}\cdots\overset{D_l}{\rightarrow}x_l\overset{t}{\rightarrow}\big)$.

When $(x_i)$, $(D_i)$ and $t$ vary, all the sets $A_t=A\big(x_0\overset{D_1}{\rightarrow}x_1\overset{D_2}{\rightarrow}\cdots\overset{D_l}{\rightarrow}x_l\overset{t}{\rightarrow}\big)$ constitute a $\pi$-system. The $\sigma$-field generated by these sets on $(\Omega,\mathscr{B})$ contains sets in the form $\{\omega:\omega_s=x\}$ ($x\in V$, $s\ge 0$) and hence equals $\mathscr{B}$. So \eqref{Omegapathprob2} determines the law of $Z$. That completes the proof of the uniqueness part of Theorem \ref{existunique}.

\begin{remark}\label{R123equal}
Observe that if we replace (R1) by (R2) in the Definition \ref{processtermjump}, then following the same routine, we obtain the same path probability \eqref{Omegapathprob2}. In fact, the only difference in the proof is that in \eqref{holdingprob} `$=$' should be replaced by `$\ge$', and the later proofs still work with minor modification. This easily leads to the statement in Remark \ref{R1R2equal}.
\end{remark}

\end{appendix}

\begin{acks}[Acknowledgments]
The authors are grateful to Prof. A\"{i}d\'{e}kon Elie for introducing this project and inspiring discussions. The authors
would also like to thank Prof. Jiangang Ying and Shuo Qin  for helpful discussions and valuable suggestions. 
\end{acks}

\begin{funding}
The first  author is partially supported by the Fundamental Research Funds for the Central Universities. The second  author is partially supported by NSFC, China (No. 11871162).
\end{funding}

\bibliographystyle{imsart-number} 
\bibliography{rayknight}       





\end{document}